\newcommand{\Ab}	{\operatorname{Ab}}
\newcommand{\End}       {\operatorname{End}}
\newcommand{\Ext}       {\operatorname{Ext}}
\newcommand{\Hom}       {\operatorname{Hom}}
\newcommand{\Tor}       {\operatorname{Tor}}
\newcommand{\uHom}	{\underline{\Hom}}
\newcommand{\ann}       {\operatorname{ann}}
\newcommand{\cok}       {\operatorname{cok}}
\newcommand{\cof}       {\operatorname{cof}}
\newcommand{\fib}       {\operatorname{fib}}
\newcommand{\img}       {\operatorname{img}}
\newcommand{\op}        {{\operatorname{op}}}
\newcommand{\el}	{\operatorname{el}}
\newcommand{\CA}        {{\mathcal{A}}}
\newcommand{\CB}        {{\mathcal{B}}}
\newcommand{\CC}        {{\mathcal{C}}}
\newcommand{\CD}        {{\mathcal{D}}}
\newcommand{\CF}        {{\mathcal{F}}}
\newcommand{\CH}        {{\mathcal{H}}}
\newcommand{\CM}        {{\mathcal{M}}}
\newcommand{\CP}        {{\mathcal{P}}} 
\newcommand{\CS}        {{\mathcal{S}}}
\newcommand{\CT}        {{\mathcal{T}}}
\newcommand{\CX}        {{\mathcal{X}}}
\newcommand{\CY}        {{\mathcal{Y}}}
\newcommand{\hC}	{\widehat{C}}
\newcommand{\hL}	{\widehat{L}}
\newcommand{\hP}	{\widehat{P}}
\newcommand{\hCB}	{\widehat{\mathcal{B}}}
\newcommand{\hCM}	{\widehat{\mathcal{M}}}
\newcommand{\F}         {{\mathbb{F}}}          
\newcommand{\N}         {{\mathbb{N}}}
\newcommand{\Z}         {{\mathbb{Z}}}
\newcommand{\Q}         {{\mathbb{Q}}}
\newcommand{\al}        {\alpha}
\newcommand{\bt}        {\beta} 
\newcommand{\dl}        {\delta}
\newcommand{\ep}        {\epsilon}
\newcommand{\zt}        {\zeta}
\newcommand{\tht}       {\theta}
\newcommand{\kp}        {\kappa}
\newcommand{\lm}        {\lambda}
\newcommand{\om}        {\omega}
\newcommand{\Sg}        {\Sigma}
\newcommand{\ip}[1]     {\langle #1\rangle}
\newcommand{\sse}       {\subseteq}
\newcommand{\Smash}     {\wedge}
\newcommand{\bigWedge}  {\bigvee}
\newcommand{\ot}        {\otimes}
\newcommand{\st}        {\;|\;}
\newcommand{\xra}       {\xrightarrow}
\newcommand{\era}       {\twoheadrightarrow}
\newcommand{\mra}       {\rightarrowtail}
\newcommand{\xla}       {\xleftarrow}
\newcommand{\ov}[1]     {\overline{#1}}
\newcommand{\bsm}       {\left[\begin{smallmatrix}}
\newcommand{\esm}       {\end{smallmatrix}\right]}
\newcommand{\colim}  {\operatornamewithlimits{\underset{\longrightarrow}{lim}}}
\renewcommand{\:}{\colon}
\newtheorem{theorem}{Theorem}[section]
\newtheorem{lemma}[theorem]{Lemma}
\newtheorem{proposition}[theorem]{Proposition}
\newtheorem{corollary}[theorem]{Corollary}
\theoremstyle{definition}
\newtheorem{remark}[theorem]{Remark}
\newtheorem{definition}[theorem]{Definition}
\newtheorem{example}[theorem]{Example}
\newtheorem{construction}[theorem]{Construction}
\begin{document}
\title{Arithmetic localisation and completion of spectra}
\author{N.~P.~Strickland}

\maketitle 

\begin{abstract}
 This is an exposition of facts about $p$-local spectra, $p$-complete
 spectra and modules over the $p$-complete sphere spectrum, including
 homological criteria for finiteness.  Most things are well-known to
 the experts, with a couple of potential exceptions explained in the
 introduction. 
\end{abstract}

\section{Introduction}
\label{sec-intro}

Let $\CB$ be Boardman's stable homotopy category of spectra
(see~\cite{ma:ssa}, for example).  Given a set $Q$ of primes, we can
define a localised subcategory $\CB[Q^{-1}]$, as will be recalled in
Section~\ref{sec-local}.  Given a single prime $p$, we have a category
$\CB_{(p)}=\CB[(\{\text{primes}\}\setminus\{p\})^{-1}]$ of $p$-local
spectra, and we can also define a subcategory $\hCB_p$ of $p$-complete
spectra, as will be recalled in Section~\ref{sec-complete}.  This
contains the $p$-complete sphere spectrum $S_p$.  One can also
construct $S_p$ as a strictly commutative ring spectrum and then
consider the homotopy category of $S_p$-modules, which we call
$\CM_p$; this will be recalled in Section~\ref{sec-module}.  (Although
we will not give details in this note, there are a number of
applications that work more smoothly if we use $\CM_p$ as the ambient
category rather than $\CB$ or $\CB_{(p)}$ or $\hCB_p$.  For example,
this applies to various questions related to Freyd's Generating
Hypothesis, or to the Chromatic Splitting Conjecture.)

All of these are connective stable homotopy categories in the sense
of~\cite{hopast:ash}, such that the localising subcategory generated
by the unit of the symmetric monoidal product is the whole category.

\begin{remark}\label{rem-notation}
 We will refer to~\cite{hopast:ash} for a number of things, but we
 will use different notation.  There, coproducts are denoted by
 $\bigWedge_iX_i$, the monoidal product of $X$ and $Y$ is denoted by
 $X\Smash Y$, and the corresponding internal function object is
 denoted by $F(X,Y)$.  Here we will follow more recent conventions and
 use the notations $\bigoplus_iX_i$ and $X\ot Y$ and $\uHom(X,Y)$.
 The unit of the symmetric monoidal product will generically be
 denoted by $S$, but by some other symbol such as $S[Q^{-1}]$ in some
 particular cases.  The suspension functor will be denoted by
 $\Sg$, and we write $S^i$ for $\Sg^i S$.
\end{remark}

There are two standard finiteness conditions to consider in this
context. 
\begin{definition}
 Let $\CC$ be a stable homotopy category.
 \begin{itemize}
  \item[(a)] We say that an object $X\in\CC$ is \emph{compact} (or
   \emph{small}) if the functor $\CC(X,-)$ preserves all coproducts.
  \item[(b)] We say that an object $X\in\CC$ is \emph{strongly
    dualisable} if the natural map $\uHom(X,S)\ot X\to\uHom(X,X)$ is an
   isomorphism.  
 \end{itemize}
\end{definition}

In this note we will develop some general theory of the categories
$\CB[Q^{-1}]$, $\hCB_p$ and $\CM_p$, with emphasis on the above
finiteness conditions and their relationship with various homological
criteria.  Almost all of the results explained here are well-known to
the experts, and many of them are treated in~\cite[Chapters 8 and
9]{ma:ssa}.  Other useful classical references include~\cite{bo:lspe}
and~\cite{boka:hlc}.  Nonetheless, we hope that a consolidated and
updated treatment will be useful.  We will need various facts about
the algebraic theory of abelian groups, and for these we will mostly
refer to~\cite{st:ata}, which is again a consolidated exposition of
facts that are mostly well-known.

One fact proved here that is not quite as standard is that any
strongly dualisable object in $\hCB_p$ or $\CM_p$ is equivalent to the
$p$-completion of a finite spectrum.  We have not seen this in the
literature, and we found that it required more work than expected.  It
appears here as part of Proposition~\ref{prop-finite-complete}.  We
will also prove as Theorem~\ref{thm-brown} that $\CM_p$ has Brown
representability for homology.  The corresponding facts for
$\CB[Q^{-1}]$ and $\hCB_p$ appear in the literature, but for $\CM_p$
we need to adapt the method used in~\cite{ne:tba} rather than quoting
the results proved there.

\section{Global spectra}
\label{sec-global}

\begin{definition}\label{defn-CW}
 Let $X$ be a bounded below spectrum.  By a \emph{CW structure} on $X$ we
 mean a diagram 
 \[ \dotsb X_{-2} \to X_{-1} \to X_0 \to X_1 \to X_2 \to \dotsb \to X
 \]
 such that 
 \begin{itemize}
  \item[(a)] $X_i=0$ for $i\ll 0$.
  \item[(b)] The cofibre $X_i/X_{i-1}$ is (equivalent to) a spectrum
   of the form $(T_i)_+\ot S^i$, for some set $T_i$.
  \item[(c)] The cofibre $X/X_i$ has $\pi_j(X/X_i)=0$ for $j\leq i$.  
 \end{itemize}
 We say that the structure is of \emph{finite type} if each set $T_i$
 is finite.  We say that the structure is \emph{finite} if in addition
 we have $T_i=\emptyset$ for almost all $i$.
\end{definition}

We will often produce CW structures (and other similar structures)
using the following construction.
\begin{construction}\label{cons-flip}
 Suppose we have spectra $X^i$ for all $i\in\Z$ together with
 morphisms $f^i\:X^i\to X^{i+1}$ that are identities for $i\ll 0$.
 Let $W_i$ be the fibre of $f^i$ (so $W_i=0$ for $i\ll 0$).  Let
 $X^{-\infty}$ be the value of $X^i$ for $i\ll 0$, and let 
 $X^\infty$ be the sequential colimit (or telescope) of the sequence.  
 Put $X_i=\fib(X_{-\infty}\to X^{i+1})$.  The octahedral
 axiom provides a diagram as follows, in which the rows and columns
 are cofibre sequences, and the middle square is a homotopy pullback: 
 \begin{center}
  \begin{tikzcd}
   & W_i \arrow[d] \arrow[r,equals] & 
   W_i \arrow[d] \\
   X^{-\infty} \arrow[r] \arrow[d,equals] &
   X^i \arrow[r] \arrow[d] & 
   \Sg X_{i-1} \arrow[r] \arrow[d] &
   \Sg X^{-\infty} \arrow[d,equals] \\
   X^{-\infty} \arrow[r] &
   X^{i+1} \arrow[r] \arrow[d] &
   \Sg X_i \arrow[r] \arrow[d] &
   \Sg X^{-\infty} \\
   & \Sg W_i \arrow[r,equals] &
   \Sg W_i 
  \end{tikzcd}
 \end{center}
 In other words, we have a chain of maps 
 \[ \dotsb \to X_{i-2} \xra{g_{i-2}} X_{i-1} \xra{g_{i-1}} X_i
      \xra{g_i} X_{i+1} \to \dotsb  
 \]
 with $X_i=0$ for $i\ll 0$, and $X_i/X_{i-1}=W_i$.  It follows that
 $X_i$ lies in the thick subcategory generated by $\{W_j\st j\leq
 i\}$.  Now let $X_\infty$ be the homotopy colimit of the spectra
 $X_i$.  We find that this is the fibre of the map $X^{-\infty}\to
 X^{\infty}$, and that it lies in the localising subcategory
 generated by $\{W_i\st i\in\Z\}$.
\end{construction}

\begin{lemma}\label{lem-bb-hurewicz}
 Let $X$ be a bounded below spectrum.
 \begin{itemize}
  \item[(a)] If $H_*(X)=0$ then $X$ is contractible.
  \item[(b)] If $i$ is the smallest integer such that $H_i(X)\neq 0$,
   then the Hurewicz map $\pi_i(X)\to H_i(X)$ is an isomorphism.
 \end{itemize}
\end{lemma}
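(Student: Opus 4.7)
My approach is to prove the standard stable Hurewicz theorem in the stronger form: \emph{if $X$ is bounded below with $\pi_j X = 0$ for $j < n$, then $H_j X = 0$ for $j < n$ and the Hurewicz map $\pi_n X \to H_n X$ is an isomorphism.}  Given this, (a) follows by contradiction: if $H_*X = 0$ and $X \not\simeq 0$, then since $X$ is bounded below and non-contractible there is a smallest integer $n$ with $\pi_n X \neq 0$, whence Hurewicz forces $H_n X = \pi_n X \neq 0$.  And (b) follows because the smallest $n$ with $\pi_n X \neq 0$ (which exists by (a), since $H_*X \neq 0$) must coincide with the smallest $i$ with $H_i X \neq 0$.

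To prove the Hurewicz statement, I will build a CW structure on $X$ whose cells lie in degrees $\geq n$, via Construction~\ref{cons-flip}.  Set $X^j = X$ for $j \leq n$, and for $j \geq n$ inductively define $X^{j+1}$ to be the cofibre of a map $W_j = (T_j)_+ \ot S^j \to X^j$ where $T_j$ is chosen so that $\pi_j W_j \to \pi_j X^j$ is surjective.  The homotopy long exact sequence then makes $X^{j+1}$ into a $j$-connected spectrum.  Consequently $\pi_k X^\infty = \colim_j \pi_k X^j = 0$ for every $k$, so $X^\infty \simeq 0$ in $\CB$, and Construction~\ref{cons-flip} identifies $X$ with $X_\infty$, equipped with a CW filtration satisfying $X_{n-1} = 0$ and $X_j/X_{j-1} = W_j$.

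From this filtration, $H_j X = 0$ for $j < n$ follows by induction on skeleta, using the cofibre sequences $X_{j-1} \to X_j \to W_j$ together with $H_k W_j = 0$ for $k < j$.  For the Hurewicz iso in degree $n$, the cofibre sequences $X_j \to X_{j+1} \to W_{j+1}$, combined with the vanishing of $\pi_n S^k$ and $H_n S^k$ for $k > n$ (so higher cells do not contribute in this degree), yield
\[
\pi_n X = \pi_n W_n / \img\bigl(\pi_{n+1} W_{n+1} \to \pi_n W_n\bigr), \qquad H_n X = H_n W_n / \img\bigl(H_{n+1} W_{n+1} \to H_n W_n\bigr),
\]
where both image maps are induced by the same attaching map $W_{n+1} \to \Sg W_n$.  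The elementary Hurewicz isomorphism for wedges of spheres (immediate from the case of a single sphere and preservation of coproducts) then identifies these two presentations, giving the Hurewicz isomorphism for $X$.

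The main obstacle is the convergence step, namely verifying $X^\infty \simeq 0$ so that $X_\infty \simeq X$; this rests on the basic fact that a spectrum with trivial homotopy groups is contractible in $\CB$.  A secondary delicate point is matching the $\pi_*$ and $H_*$ boundary maps, which is naturality of the Hurewicz transformation applied to the attaching map $W_{n+1} \to \Sg W_n$ and will need to be stated with care.
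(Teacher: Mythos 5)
Your proof is correct, but it is running at a deeper level than the paper's argument. The paper simply invokes the stable Hurewicz theorem (if $X$ is bounded below and $\pi_j X=0$ for $j<n$, then $\pi_n X\simeq H_n X$) as a known black box and bootstraps: start at the bottom of the homotopy, apply Hurewicz to see $\pi_n X\simeq H_n X=0$, move up one degree, and induct. This gives (a) in four lines, and (b) by the same induction stopping at the first nonzero homology group. You instead \emph{reprove} the stable Hurewicz theorem from scratch: build a cellular filtration via Construction~\ref{cons-flip} with cells in degrees $\geq n$, read off $H_j X=0$ for $j<n$ and the two presentations of $\pi_n X$ and $H_n X$ as quotients of $\pi_n W_n$ and $H_n W_n$, then match them by naturality of the Hurewicz map applied to the attaching map $W_{n+1}\to\Sg W_n$ together with the elementary Hurewicz isomorphism for coproducts of spheres. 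This is sound --- the convergence step ($X^\infty\simeq 0$ because $\pi_*X^\infty=\colim\pi_*X^j=0$), the cell-by-cell computation in a single degree $n$ using that $\pi_n$ and $H_n$ are insensitive to cells of dimension $\geq n+2$, and the naturality argument at the end all check out. The trade-off is clarity of effort: your version is longer and more self-contained (you only appeal to Whitehead for spectra and to $\pi_0 S\to\pi_0 H$ being an isomorphism), while the paper's version is much shorter but depends on citing Hurewicz externally, which is consistent with how the paper uses it in Constructions~\ref{cons-CW-step} and~\ref{cons-CW-step-local}. If you are writing within this paper's conventions, the black-box inductive argument is what is intended; if you want an independent proof, your approach is essentially the standard proof of the stable Hurewicz theorem and is the right way to do it.
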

\begin{proof}
 First suppose that $H_*(X)=0$.  By assumption $X$ is bounded below, so
 there exists $n$ such that $\pi_j(X)=0$ for all $j<n$.  The Hurewicz
 theorem then tells us that $\pi_n(X)\simeq H_n(X)=0$.  This allows us
 to apply the Hurewicz theorem one dimension higher, so
 $\pi_{n+1}(X)\simeq H_{n+1}(X)=0$.  We can continue this inductively
 to see that $\pi_i(X)=0$ for all $X$, so $X$ is contractible.
 The proof for~(b) is essentially the same.
\end{proof}

\begin{construction}\label{cons-CW-step}
 Let $X$ be a spectrum with $\pi_i(X)=0$ for $i<b$ (so
 $H_b(X)\simeq\pi_b(X)$ by the Hurewicz Theorem).  If $\pi_b(X)$ is a
 free abelian group, we choose a family of elements $(u_t)_{t\in T}$
 in $\pi_b(X)$ that form a basis.  If $\pi_b(X)$ is not a free abelian
 group, we choose a family of elements $(u_t)_{t\in T}$ in $\pi_b(X)$
 that generates $\pi_b(X)$ and has cardinality as small as possible.
 The maps $u_t\:S^b\to X$ can be assembled to give a map $u\:W\to X$,
 where $W=\bigoplus_{t\in T}S^b=T_+\ot S^b$, and we write $Y$ for
 the cofibre of $u$.
\end{construction}

\begin{lemma}\label{lem-CW-step}
 With notation as in Construction~\ref{cons-CW-step}:
 \begin{itemize}
  \item[(a)] $\pi_i(Y)=H_i(Y)=0$ for $i\leq b$.
  \item[(b)] $H_i(Y)=H_i(X)$ for $i>b+1$.
  \item[(c)] If $\pi_b(X)$ is free abelian then $H_i(Y)=H_i(X)$ for
   $i\geq b+1$.
  \item[(d)] If $H_{b+1}(X)$ is free abelian then so is $H_{b+1}(Y)$.
  \item[(e)] If the groups $H_b(X)=\pi_b(X)$ and $H_{b+1}(X)$ are both
   finitely generated, then so is $H_{b+1}(Y)$.
 \end{itemize}
\end{lemma}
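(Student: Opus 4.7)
The plan is to extract all five claims from the long exact homology sequence associated with the cofibre sequence $W\to X\to Y$. Since $W = T_+\ot S^b$ is a wedge of $b$-spheres, $H_*(W)$ is concentrated in degree $b$, where it equals the free abelian group $\Z\{T\}$, and by construction the induced map $H_b(W)\to H_b(X)=\pi_b(X)$ is surjective (it sends the obvious basis to the chosen generators $u_t$). Let $K$ denote its kernel.

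For (a), I would first note that $Y$ is bounded below, as the cofibre of two bounded below spectra. The LES shows $H_i(Y)=0$ for $i<b$ immediately, and using the surjectivity above also $H_b(Y)=0$. Applying Lemma~\ref{lem-bb-hurewicz}(a) to $\Sg^{-b-1}Y$ (or arguing directly via Hurewicz) then upgrades this to $\pi_i(Y)=0$ for $i\leq b$. For (b), when $i>b+1$ both $H_i(W)$ and $H_{i-1}(W)$ vanish, so $H_i(X)\cong H_i(Y)$ directly. For (c), the extra hypothesis means the $u_t$'s form a basis, so $H_b(W)\to H_b(X)$ is an isomorphism (i.e.\ $K=0$), and the LES then forces $H_{b+1}(X)\cong H_{b+1}(Y)$.

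For (d) and (e), the relevant portion of the LES collapses, thanks to $H_{b+1}(W)=0$ and the surjectivity in degree $b$, to a short exact sequence
\[
0 \to H_{b+1}(X) \to H_{b+1}(Y) \to K \to 0.
\]
Since $K$ is a subgroup of the free abelian group $H_b(W)\cong\Z\{T\}$, it is itself free abelian, so the sequence splits. Statement (d) follows at once. For (e), the minimality clause in Construction~\ref{cons-CW-step} ensures that $T$ is finite, being the minimum cardinality of a generating set for the finitely generated group $\pi_b(X)$; hence $K\subseteq\Z\{T\}$ is finitely generated, and the SES shows that $H_{b+1}(Y)$ is finitely generated.

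There is no real obstacle here; the only point needing attention is that the surjectivity of $H_b(W)\to H_b(X)$ is uniformly available in both cases (free basis or minimal generating set), which is built into Construction~\ref{cons-CW-step}, and that one has the bounded below hypothesis in hand for part~(a) so that Lemma~\ref{lem-bb-hurewicz} applies.
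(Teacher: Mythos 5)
Your argument is essentially the same as the paper's and is correct: the paper likewise reads off (b)--(e) from the homology long exact sequence of $W\to X\to Y$, splits $H_{b+1}(X)\mra H_{b+1}(Y)\era K$ using that $K$ is a subgroup of the free abelian $H_b(W)$, and observes that the minimality clause in the construction forces $T$ to be finite when $\pi_b(X)$ is finitely generated.

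One small discrepancy in handling (a): you go from $H_i(Y)=0$ (for $i\leq b$) to $\pi_i(Y)=0$, whereas the paper runs the homotopy long exact sequence first, using surjectivity of $\pi_b(W)\to\pi_b(X)$ to get $\pi_i(Y)=0$ for $i\leq b$, and then invokes Hurewicz to transfer this to $H_*$. Both directions work, but your proposed citation of Lemma~\ref{lem-bb-hurewicz}(a) applied to $\Sg^{-b-1}Y$ is not quite right: that part of the lemma needs $H_*(Y)=0$ in \emph{all} degrees, which is not available here. The correct tool from that lemma is part~(b), or equivalently the inductive Hurewicz bootstrap you indicate parenthetically; either fixes the step. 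Also note that invoking the homology LES to conclude $H_i(Y)=0$ for $i<b$ tacitly uses the usual Hurewicz direction to know $H_i(X)=0$ for $i<b$, which is fine but worth stating.
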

\begin{proof}\leavevmode
 First, we have exact sequences
 $\pi_i(W)\to\pi_i(X)\to\pi_i(Y)\to\pi_{i-1}(W)$ with
 $\pi_i(W)=\pi_i(X)=0$ for $i<b$ and $\pi_b(W)=\bigoplus_{t\in T}\Z$,
 and the map $\pi_b(W)\to\pi_b(X)$ is surjective by construction.  It
 follows easily that $\pi_i(Y)=0$ for $i\leq b$, and so $H_i(Y)=0$ for
 $i\leq b$ by the Hurewicz Theorem.  This proves~(a).
  
 Next, we have exact sequences
 $H_i(W)\to H_i(X)\to H_i(Y)\to H_{i-1}(W)$.  If $i\not\in\{b,b+1\}$
 then $H_i(W)=H_{i-1}(W)=0$ so $H_i(Y)=H_i(X)$, which proves~(b).  The
 more interesting part of the sequence has the form
 \[ H_{b+1}(X) \mra H_{b+1}(Y) \to H_b(W) \xra{u_*}
     H_b(X) \era H_b(Y).
 \]
 The map $u_*$ is surjective by construction, so $H_b(Y)=0$, as we
 already saw in~(a).  If $H_b(X)$ is free abelian then $u_*$ is an
 isomorphism by construction and so $H_{b+1}(Y)=H_{b+1}(X)$; this
 proves~(c).  In general, let $F$ be the kernel of $u_*$.  This is a
 subgroup of the free abelian group $\pi_b(W)$, so it is again a free
 abelian group.  (This standard fact is proved
 as~\cite[Theorem 6.6]{st:ata}, for example.)  It follows that the
 short exact sequence $H_{b+1}(X)\to H_{b+1}(Y)\to F$ must split,
 giving $H_{b+1}(Y)=H_{b+1}(X)\oplus F$.  Claim~(d) is clear from this.
 Moreover, if $\pi_b(X)$ is finitely generated then the same is true
 of $\pi_b(W)$ and $F$, so if $H_{b+1}(X)$ is also finitely generated
 then $H_{b+1}(Y)$ has the same property.  This proves~(e).
\end{proof}

\begin{proposition}\label{prop-CW}\leavevmode
 Any bounded below spectrum $X$ admits a CW structure
 $\{X_n\}_{n\in\Z}$ (with $X_n/X_{n-1}=(T_n)_+\ot S^n$ say) such
 that the following additional properties are satisfied:
 \begin{itemize}
  \item[(a)] If $\pi_i(X)=0$ for all $i<b$, then $X_i=0$ and
   $T_i=\emptyset$ for all $i<b$. 
  \item[(b)] If $H_t(X)$ is free abelian and $H_i(X)=0$ for all $i>t$
   then $X_i=X$ for $i\geq t$, and $T_i=\emptyset$
   for $i>t$.  (In particular, this works if $H_i(X)=0$ for all
   $i\geq t$.)
  \item[(c)] If $H_i(X)$ is finitely generated for all $i$ then $T_i$
   is finite for all $i$.  
  \item[(d)] If $\bigoplus_iH_i(X)$ is finitely generated, then
   $\coprod_iT_i$ is finite.
 \end{itemize}
\end{proposition}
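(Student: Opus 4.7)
The plan is to build a system of successive cell attachments producing a sequence $X^i$ of increasingly connected cofibres, then apply Construction~\ref{cons-flip} to flip this into the desired CW filtration. Let $b$ be an integer with $\pi_j(X)=0$ for $j<b$; set $X^i=X$ and take $f^i$ to be the identity for $i\le b$. Inductively, given $X^i$ with $\pi_j(X^i)=0$ for $j<i$, I apply Construction~\ref{cons-CW-step} to obtain a map $W_i=(T_i)_+\ot S^i\to X^i$ hitting a minimal generating set of $\pi_i(X^i)=H_i(X^i)$, define $X^{i+1}$ to be its cofibre, and take $f^i\: X^i\to X^{i+1}$ as the canonical map; by Lemma~\ref{lem-CW-step}(a) this $X^{i+1}$ is $i$-connected, and the fibre of $f^i$ is $W_i$. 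Construction~\ref{cons-flip} then yields a chain $\{X_i\}_{i\in\Z}$ with $X_i=0$ for $i<b$, $X_i/X_{i-1}=W_i=(T_i)_+\ot S^i$, and $X/X_i=X^{i+1}$; the connectivity of the latter supplies Definition~\ref{defn-CW}(c). To check that $X_\infty\to X$ is an equivalence, I note that for each fixed $j$ the group $\pi_j(X^i)$ vanishes once $i>j$, so $\pi_j(X^\infty)=\colim_i\pi_j(X^i)=0$; hence $X^\infty\simeq 0$ and $X_\infty=\fib(X\to X^\infty)\simeq X$.

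For the additional properties, the key algebraic step is tracking $H_*(X^i)$. Iterating Lemma~\ref{lem-CW-step}(b) gives $H_j(X^i)=H_j(X)$ for all $j>i$, and examining the cofibre sequence $W_{i-1}\to X^{i-1}\to X^i$ yields a short exact sequence $0\to H_i(X)\to H_i(X^i)\to F_{i-1}\to 0$ with $F_{i-1}=\ker(H_{i-1}(W_{i-1})\to H_{i-1}(X^{i-1}))$ a subgroup of the free abelian group $\Z^{T_{i-1}}$ and hence itself free abelian, so the sequence splits. Property (a) is immediate from the setup. For (b), the hypothesis makes $H_t(X^t)=H_t(X)\oplus F_{t-1}$ free abelian; Lemma~\ref{lem-CW-step}(c) then gives $H_j(X^{t+1})=H_j(X^t)=0$ for $j\ge t+1$, and combined with $H_j(X^{t+1})=0$ for $j\le t$ from part (a), we conclude $H_*(X^{t+1})=0$, so $X^{t+1}\simeq 0$ by Lemma~\ref{lem-bb-hurewicz}(a); this forces $T_i=\emptyset$ for $i>t$ and $X_i=X$ for $i\ge t$. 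Property (c) follows by iteratively applying Lemma~\ref{lem-CW-step}(e): finite generation of $H_i(X)$ together with finiteness of $T_{i-1}$ forces $H_i(X^i)$ finitely generated, so $T_i$ can be chosen finite. For (d), let $t$ be the top degree with $H_t(X)\ne 0$; then by (c) the group $\pi_{t+1}(X^{t+1})=F_t$ is a finitely generated free abelian group, so choosing a basis at step $t+1$ and repeating the argument of (b) one level up contracts $X^{t+2}$; thus $T_i=\emptyset$ for $i\ge t+2$ and $\coprod_iT_i$ is finite.

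The main obstacle is the homological bookkeeping: establishing the decomposition $H_i(X^i)\cong H_i(X)\oplus F_{i-1}$ with $F_{i-1}$ free abelian requires carefully iterating parts (a) and (b) of Lemma~\ref{lem-CW-step} and invoking the fact (used in the proof of that lemma) that subgroups of free abelian groups are free. Once this splitting is in hand, (b) and (d) boil down to using the free abelian ``defect'' $F_t$ as the generating data at the following step, so that Lemma~\ref{lem-CW-step}(c) together with Lemma~\ref{lem-bb-hurewicz}(a) terminates the construction; the rest is a routine assembly of Constructions~\ref{cons-CW-step} and~\ref{cons-flip}.
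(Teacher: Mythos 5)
Your proof is correct and follows essentially the same route as the paper: both build the coconnective tower $X^k$ by iterating Construction~\ref{cons-CW-step}, flip it to a skeletal filtration via Construction~\ref{cons-flip}, verify $X^\infty\simeq 0$ by a connectivity colimit argument, and then read off (a)--(d) from the homological bookkeeping in Lemma~\ref{lem-CW-step}. Your explicit splitting $H_i(X^i)\cong H_i(X)\oplus F_{i-1}$ is just an unpacking of Lemma~\ref{lem-CW-step}(d), so the two arguments are the same in substance.
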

\begin{proof}
 If $X=0$ then everything is clear (with $X_n=0$ and $T_n=\emptyset$
 for all $n$).  Otherwise, let $b$ be the smallest integer such that
 $\pi_b(X)\neq 0$.  Put $X^k=X$ for $k\leq b$.  Then apply
 Construction~\ref{lem-CW-step} repeatedly to give cofibrations
 $W_k\to X^k\to X^{k+1}$ where $W_k=(T_k)_+\ot S^k$.  These will have
 properties as follows:
 \begin{itemize}
  \item[(a)] $\pi_i(X^{k+1})=H_i(X^{k+1})=0$ for $i\leq k$.
  \item[(b)] $H_i(X^{k+1})=H_i(X^k)$ for $i>k+1$.
  \item[(c)] If $\pi_k(X^k)$ is free abelian then
   $H_i(X^{k+1})=H_i(X^k)$ for $i\geq k+1$.
  \item[(d)] If $H_{k+1}(X^k)$ is free abelian then so is $H_{k+1}(X^{k+1})$.
  \item[(e)] If the groups $H_k(X^k)=\pi_k(X^k)$ and $H_{k+1}(X^k)$ are both
   finitely generated, then so is $H_{k+1}(X^{k+1})$.
 \end{itemize}
 Let $X^\infty$ be the homotopy colimit of the spectra $X^k$.  Then
 $\pi_i(X^\infty)$ is the colimit of the groups $\pi_i(X^k)$, which
 are zero for $k>i$, so $\pi_i(X^\infty)=0$ for all $i$, so
 $X^\infty=0$.  Now put $X_i=\fib(X\to X^{i+1})$ as in
 Construction~\ref{cons-flip}.  The homotopy colimit of the spectra
 $X_i$ is then $\fib(X^{-\infty}\to X^\infty)=\fib(X\to 0)=X$, and
 $X_i/X_{i-1}=W_i$.  It follows that this gives a CW structure.
 Property~(a) is clear by construction.  Now suppose that $H_t(X)$ is
 free abelian and $H_i(X)=0$ for $i>t$.  Using Lemma~\ref{lem-CW-step}
 we see that when $k<t$ we have $H_i(X^k)=H_i(X)$ for all $i\geq t$,
 so $H_t(X^k)$ is free abelian and $H_i(X^k)=0$ for $i>t$.  We then
 find that $H_t(X^t)$ may be different from $H_t(X)$, but it is still
 free abelian by Lemma~\ref{lem-CW-step}(d).  Thus, in the next
 application of Construction~\ref{cons-CW-step} we will get
 $H_*(X^{t+1})=0$, but $X^{t+1}$ is bounded below, so $X^{t+1}=0$.
 This means that $X_t=\fib(X\to X^{t+1})=X$, which implies claim~(b).
 Claim~(c) is clear from the finiteness clauses in
 Lemma~\ref{lem-CW-step}, and claim~(d) follows from~(b) and~(c).
\end{proof}

\begin{proposition}\label{prop-finite-global}
 Let $X$ be a bounded below spectrum.  Then the following are equivalent:
 \begin{itemize}
  \item[(a)] $X$ admits a finite CW structure.
  \item[(b)] $X$ lies in the thick subcategory generated by the sphere
   $S=S^0$.
  \item[(c)] $X$ is compact
  \item[(d)] $X$ is strongly dualisable
  \item[(e)] $\bigoplus_iH_*(X)$ is finitely generated over $\Z$.
 \end{itemize}
\end{proposition}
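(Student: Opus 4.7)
The plan is to prove the cycle (a) $\Rightarrow$ (b) $\Rightarrow$ (d) $\Rightarrow$ (c) $\Rightarrow$ (e) $\Rightarrow$ (a), where four of the implications are essentially formal and the work is concentrated in (c) $\Rightarrow$ (e).  For (a) $\Rightarrow$ (b), a finite CW structure exhibits $X$ as a finite iterated cofibre of maps between finite wedges of shifts of $S$, placing $X$ in the thick subcategory generated by $S$.  For (b) $\Rightarrow$ (d), the unit $S$ is self-dual and hence strongly dualisable, and the full subcategory of strongly dualisable objects is closed under shifts, cofibres and retracts, so it is thick.  For (d) $\Rightarrow$ (c), strong dualisability gives a natural equivalence $\uHom(X,Y) \simeq \uHom(X,S) \ot Y$, so $\uHom(X,-)$ preserves coproducts, and hence so does $[X,-] = \pi_0 \uHom(X,-)$.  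For (e) $\Rightarrow$ (a), Proposition~\ref{prop-CW}(d) applies directly.

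The core step is (c) $\Rightarrow$ (e).  Apply Proposition~\ref{prop-CW} to obtain a CW structure $\{X_n\}$ for $X$ with $X_n/X_{n-1} = (T_n)_+ \ot S^n$; by Construction~\ref{cons-flip}, $X$ is the homotopy colimit of the $X_n$.  Compactness of $X$ (together with the fact that a cohomological functor preserving coproducts also preserves sequential colimits, since the latter are cofibres of maps between coproducts) forces the identity on $X$ to factor as $X \to X_N \to X$ for some $N$, so $X$ is a retract of $X_N$; in particular $H_i(X) = 0$ for $i > N$.  To refine this to a retract of a \emph{finite} CW spectrum, I would express $X_N$ itself as a filtered homotopy colimit of its finite sub-CW spectra $X_N^{(F)}$, indexed by families $F = (F_n)_{b \leq n \leq N}$ of finite subsets $F_n \sse T_n$ chosen inductively so that the attaching map of each cell indexed by $F_n$ factors through the finite sub-CW $X_{n-1}^{(F)}$ built at the previous stage (here one uses that each sphere $S^{n-1}$ is compact, so each individual attaching map lands in a finite sub-CW of $X_{n-1}$).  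Compactness of $X$ then forces the canonical map $X \to X_N$ to factor through some $X_N^{(F)}$, making $X$ a retract of a finite CW spectrum; its homology is then a retract of a finitely generated graded abelian group, which gives (e).

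The main obstacle is this inductive refinement: one must verify that $X_N$ really is the filtered homotopy colimit of its finite sub-CW pieces in $\CB$, and that the attaching-map factorisations can be chosen compatibly across all levels $b \leq n \leq N$.  An alternative route invokes Neeman's theorem identifying the compact objects of a compactly generated triangulated category with retracts of the thick subcategory generated by a compact generator; since $S$ is such a generator for $\CB$, this immediately realises any compact $X$ as a retract of a finite CW spectrum, shortcutting the explicit construction but importing more machinery than the paper otherwise uses.
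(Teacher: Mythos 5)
Your cycle (a)$\Rightarrow$(b)$\Rightarrow$(d)$\Rightarrow$(c)$\Rightarrow$(e)$\Rightarrow$(a) is logically sound, and four of the five implications are handled correctly: (a)$\Rightarrow$(b) and (e)$\Rightarrow$(a) just as in the paper, and (b)$\Rightarrow$(d), (d)$\Rightarrow$(c) by the standard thickness-of-dualisables and $\uHom(X,-)\simeq\uHom(X,S)\ot(-)$ arguments, which are fine.  The difference from the paper is concentrated in (c)$\Rightarrow$(e), and here the two routes genuinely diverge.  The paper does not prove (c)$\Rightarrow$(e) directly: it cites~\cite[Theorem 2.1.3(d)]{hopast:ash} to identify (b), (c) and (d) all at once (this is exactly the Neeman-style identification of compact objects with the thick subcategory generated by the unit), and then proves (b)$\Rightarrow$(e) by the very short observation that $\{X : H_*(X)\text{ f.g.}\}$ is a thick subcategory containing $S$.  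You instead attempt a hands-on proof that a compact bounded-below spectrum retracts off a finite CW spectrum.  The first half of your argument — commuting $[X,-]$ past the sequential colimit to get $X$ as a retract of some skeleton $X_N$ — is correct (the telescope cofibre sequence $\bigoplus X_n\to\bigoplus X_n\to\operatorname{hocolim}$ gives $[X,\operatorname{hocolim} X_n]=\colim[X,X_n]$ once $[X,-]$ preserves coproducts).  But you then need $X_N$ itself to be a filtered homotopy colimit of finite sub-CW spectra with the attaching maps chosen compatibly across skeleta, and you correctly flag that you have not verified this; this is the content of the result you would otherwise cite, so as written the step is an acknowledged gap rather than a proof.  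Your fallback — invoking Neeman's identification of compact objects with (retracts of) the thick subcategory generated by a compact generator — is essentially the same move the paper makes via HPS, and with that in hand you land on (b), after which the paper's thick-subcategory argument for (b)$\Rightarrow$(e) closes the cycle cleanly.  So: with the Neeman/HPS shortcut your proof is correct and essentially the same as the paper's, modulo reordering the cycle; without it, the filtered-colimit refinement needs to be carried out in detail before (c)$\Rightarrow$(e) can be claimed.
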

\begin{proof}
 It is clear that condition~(a) implies~(b).  It is standard that $S$
 is compact in $\CB$, so $\CB$ is monogenic as defined
 in~\cite{hopast:ash}.  We therefore see using~\cite[Theorem
 2.1.3(d)]{hopast:ash} that conditions~(b), (c) and~(d) are all
 equivalent.  (Moreover, this does not require the hypothesis that $X$
 is bounded below.)  Put
 $\CC=\{X\st H_*(X) \text{ is finitely generated }\}$, and note that
 this is preserved by suspension and desuspension.  Suppose we have a
 cofibration $X\xra{f}Y\xra{g}Z\xra{h}\Sg X$ with $X,Z\in\CC$.  We
 then have a long exact sequence $H_*(X)\to H_*(Y)\to H_*(Z)$ with
 $H_*(X)$ and $H_*(Z)$ finitely generated.  This in turn gives a short
 exact sequence $A_*\to H_*(Y)\to B_*$ where $A_*$ is a quotient of
 $H_*(X)$ and $B_*$ is a subgroup of $H_*(Z)$, so both $A_*$ and $B_*$
 are finitely generated.  This implies that $H_*(Y)$ is finitely
 generated, so $Y\in\CC$.  Similarly, if $X$ is a retract of $Y$ and
 $Y\in\CC$ then $H_*(X)$ is a retract of the finitely generated group
 $H_*(Y)$, so it is also finitely generated, so $X\in\CC$.  This
 proves that $\CC$ is a thick subcategory and it clearly contains $S$.
 Using this we see that~(b) implies~(e).  Finally,
 Proposition~\ref{prop-CW} implies that~(e) implies~(a).
\end{proof}

\section{Moore spectra}
\label{sec-moore}

\begin{definition}\label{defn-moore}
 A \emph{Moore spectrum} is a spectrum $M$ with $\pi_kM=0$ for $k<0$
 (so $\pi_0(M)=H_0(M)$ by the Hurewicz theorem) and $H_k(M)=0$ for $k>0$.  
 We say that a Moore spectrum $M$ is \emph{free} if $H_0(M)$ is a free
 abelian group (and similarly for other adjectives that can be applied
 to abelian groups, such as \emph{finitely generated}).  Given an
 abelian group $A$, we write $SA$ for any Moore spectrum $M$ equipped
 with an isomorphism $A\to\pi_0(M)$.
\end{definition}

\begin{example}\label{eg-S-mod-n}
 For any $n>0$ we define $S/n$ to be the cofibre of $n.1_S\:S\to S$;
 this is a Moore spectrum with $\pi_0(S/n)=\Z/n$.
\end{example}

Note that for any spectra $X$ and $Y$ we have a natural map 
\[ \pi_0 \: [X,Y] \to \Hom(\pi_0X,\pi_0Y). \]
If $X$ is a Moore spectrum then this will often be an isomorphism; we
will give various results in this direction below.

\begin{lemma}\label{lem-moore-iso}
 If $f\:X\to Y$ is a map of Moore spectra and
 $\pi_0(f)\:\pi_0(X)\to\pi_0(Y)$ is an isomorphism then $f$ is an
 equivalence. 
\end{lemma}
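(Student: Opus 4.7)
The plan is to prove this by forming the cofibre of $f$ and showing it is contractible via Lemma~\ref{lem-bb-hurewicz}.

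First I would extend $f$ to a cofibre sequence $X \xra{f} Y \to Z$. Since $X$ and $Y$ are Moore spectra, they are bounded below (in particular $(-1)$-connective), and it follows that $Z$ is bounded below as well. By the connectivity form of the Hurewicz theorem applied to $X$ and $Y$ (or directly from the fact that for a $(-1)$-connective spectrum $M$ we have $\pi_k(H\Z\ot M)=0$ for $k<0$), we have $H_k(X)=H_k(Y)=0$ for all $k\neq 0$, with $H_0(X)=\pi_0(X)$ and $H_0(Y)=\pi_0(Y)$.

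Next I would chase the long exact sequence
\[ \dotsb \to H_k(X) \xra{H_k(f)} H_k(Y) \to H_k(Z) \to H_{k-1}(X) \to \dotsb \]
For $k\notin\{0,1\}$, both neighbouring terms $H_k(X)$ and $H_{k-1}(X)$ (or the corresponding $Y$ terms) vanish, forcing $H_k(Z)=0$. Around degree zero the sequence reduces to
\[ 0 = H_1(Y) \to H_1(Z) \to H_0(X) \xra{H_0(f)} H_0(Y) \to H_0(Z) \to H_{-1}(X) = 0, \]
and the hypothesis on $\pi_0(f)$ (together with the Hurewicz identification) says the middle map is an isomorphism, so $H_0(Z)=0$ and $H_1(Z)=0$ as well. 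Hence $H_*(Z)=0$.

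Finally, since $Z$ is a bounded below spectrum with vanishing homology, Lemma~\ref{lem-bb-hurewicz}(a) gives $Z\simeq 0$, and therefore $f$ is an equivalence. I do not anticipate any real obstacle here; the only thing to be careful about is confirming that both $H_0(Z)$ and $H_1(Z)$ vanish from the single isomorphism at degree zero, which the short portion of the long exact sequence displayed above handles cleanly.
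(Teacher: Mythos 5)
Your proof is correct and is essentially the paper's argument: pass to the cofibre $Z=\cof(f)$, observe that $Z$ is bounded below and $H_*(Z)=0$, and conclude via Lemma~\ref{lem-bb-hurewicz}(a). One small slip in the write-up: the vanishing $H_k(X)=H_k(Y)=0$ for $k>0$ is part of the \emph{definition} of a Moore spectrum, not a consequence of the Hurewicz theorem (which only gives the vanishing for $k<0$ and the identification $H_0\simeq\pi_0$).
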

\begin{proof}
 The assumption can be rephrased as saying that $H_0(f)$ is an
 isomorphism, so $H_*(\cof(f))=0$.  For $i<0$ we also have
 $\pi_i(X)=\pi_i(Y)=0$ and so $\pi_i(\cof(f))=0$.  It follows by the
 Hurewicz Theorem that $\cof(f)=0$, so $f$ is an equivalence.
\end{proof}

\begin{remark}\label{rem-moore-smash}
 If $M$ and $N$ are Moore spectra with $\Tor(\pi_0(M),\pi_0(N))=0$,
 then the K\"unneth theorem shows that $M\ot N$ is a Moore spectrum
 with $\pi_0(M\ot N)=\pi_0(M)\ot\pi_0(N)$.  In particular, this
 works if either $\pi_0(M)$ or $\pi_0(N)$ is torsion free.
\end{remark}

\begin{remark}\label{rem-moore-cons}
 Given an abelian group $A$, we can construct a Moore spectrum $SA$ as
 follows.  We first choose a free abelian group
 $Q_0=\bigoplus_{j\in J}\Z$ and a surjective
 homomorphism $e_0\:Q_0\to A$.  Then $\ker(e_0)$ is a subgroup of a
 free abelian group and so is free abelian, so we can choose a group
 $P_0=\bigoplus_{i\in I}\Z$ and an isomorphism $P_0\simeq\ker(e_0)$,
 giving a short exact sequence $P_0\xra{f_0}Q_0\xra{e_0}A$.  Put
 $P=\bigoplus_{i\in I}S$ and $Q=\bigoplus_{j\in J}S$ so $\pi_0(P)=P_0$
 and $\pi_0(Q)=Q_0$, so we have a natural map 
 \[ \pi_0 \: [P,Q] \to \Hom(\pi_0(P),\pi_0(Q)) = \Hom(P_0,Q_0). \]
 It is easy to see that this is an isomorphism, so there is a unique
 map $f\:P\to Q$ with $\pi_0(f)=f_0$.  If we let $M$ be the cofibre of
 $f$, it is easy to check that $M$ is a Moore spectrum of type $SA$. 
\end{remark}

\begin{lemma}\label{lem-moore-maps}
 With $SA=M$ as constructed above, for any spectrum $X$ we have a
 short exact sequence 
 \[ \Ext(A,\pi_{i+1}X) \mra \pi_i\uHom(SA,X) \era \Hom(A,\pi_iX), \]
 which is natural in $X$.
\end{lemma}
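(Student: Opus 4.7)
The plan is to apply $\uHom(-,X)$ to the defining cofibre sequence $P\xra{f}Q\to SA$ and read off the claimed exact sequence from the resulting long exact sequence in homotopy, after identifying the groups $\pi_*\uHom(P,X)$ and $\pi_*\uHom(Q,X)$ algebraically.

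First I would note that applying the (contravariant) functor $\uHom(-,X)$ to the cofibre sequence $P\to Q\to SA\to\Sg P$ yields a fibre sequence
\[ \uHom(SA,X) \to \uHom(Q,X) \xra{f^*} \uHom(P,X). \]
Since $P=\bigoplus_{i\in I}S$ and $Q=\bigoplus_{j\in J}S$, the natural maps $\uHom(P,X)\to\prod_I X$ and $\uHom(Q,X)\to\prod_J X$ are equivalences, so
\[ \pi_i\uHom(P,X) = \prod_{i\in I}\pi_i(X) = \Hom(P_0,\pi_i X), \qquad \pi_i\uHom(Q,X) = \Hom(Q_0,\pi_i X), \]
and under these identifications $\pi_i(f^*)$ becomes $\Hom(f_0,\pi_i X)$.

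Next I would write down the relevant portion of the long exact sequence obtained from the above fibre sequence, namely
\[ \Hom(Q_0,\pi_{i+1}X) \xra{f_0^*} \Hom(P_0,\pi_{i+1}X) \to \pi_i\uHom(SA,X) \to \Hom(Q_0,\pi_iX) \xra{f_0^*} \Hom(P_0,\pi_iX). \]
This exhibits $\pi_i\uHom(SA,X)$ as an extension of $\ker(f_0^*)$ (on the right) by $\cok(f_0^*)$ (on the left). Because $P_0\xra{f_0}Q_0\xra{e_0}A$ is a free resolution of $A$, the kernel $\ker\bigl(\Hom(Q_0,\pi_iX)\to\Hom(P_0,\pi_iX)\bigr)$ is canonically $\Hom(A,\pi_iX)$, and the cokernel $\cok\bigl(\Hom(Q_0,\pi_{i+1}X)\to\Hom(P_0,\pi_{i+1}X)\bigr)$ is by definition $\Ext(A,\pi_{i+1}X)$. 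Naturality in $X$ is immediate from the construction.

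The only real subtlety, and likely the main source of care rather than a true obstacle, is verifying that the identification $\uHom(\bigoplus_I S,X)\simeq\prod_I X$ really transports $f^*$ to $\Hom(f_0,-)$ at the level of $\pi_i$; this is standard but requires unwinding what it means for $\pi_0$ on $\uHom(P,Q)$ to be the obvious isomorphism, as was already used in Remark~\ref{rem-moore-cons}. Everything else is bookkeeping in a long exact sequence.
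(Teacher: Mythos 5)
Your proposal is correct and follows essentially the same route as the paper: apply $\uHom(-,X)$ to the defining cofibre sequence $P\to Q\to SA$, identify $\pi_i\uHom(P,X)$ and $\pi_i\uHom(Q,X)$ with $\Hom(P_0,\pi_iX)$ and $\Hom(Q_0,\pi_iX)$ using that $P$ and $Q$ are coproducts of sphere spectra, and read off $\Hom(A,-)$ and $\Ext(A,-)$ as the kernel and cokernel since $P_0\to Q_0\to A$ is a free resolution. The only cosmetic difference is that you pass through the equivalence $\uHom(\bigoplus_I S,X)\simeq\prod_I X$ explicitly, while the paper phrases the same identification directly as ``the natural map $[Q,\Sg^{-i}X]\to\Hom(\pi_0Q,\pi_iX)$ is an isomorphism.''
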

\begin{proof}
 The cofibration $P\xra{f}Q\to SA$ gives a fibration
 \[ \uHom(\Sg Q,X) \to \uHom(\Sg P,X) \to \uHom(SA,X) \to \uHom(Q,X) \to \uHom(P,X). \]
 By applying $\pi_i$ we get an exact sequence of abelian groups.
 We have $\pi_i\uHom(Q,X)=[Q,\Sg^{-i}X]$ and this has a natural map to
 $\Hom(\pi_0Q,\pi_0\Sg^{-i}X)=\Hom(Q_0,\pi_iX)$.  As $Q$ is just a
 coproduct of copies of $S$, this map is an isomorphism.  By
 identifying the other terms in a similar way, we get an exact
 sequence 
 \[ \Hom(Q,\pi_{i+1}X) \xra{f_0^*} \Hom(P,\pi_{i+1}X) \to 
     \pi_i\uHom(SA,X) \to
     \Hom(Q,\pi_iX) \xra{f^*} \Hom(P,\pi_iX).
 \] 
 The cokernel of the first map is (essentially by definition)
 $\Ext(A,\pi_{i+1}X)$, and the kernel of the last map is
 $\Hom(A,\pi_iX)$, so we have an induced short exact sequence as
 claimed. 
\end{proof}

\begin{remark}\label{rem-moore-maps}
 We can apply the above when $X$ is another Moore spectrum of type
 $SA$ constructed in some other way, and we can conclude that
 $X\simeq SA$, so all Moore spectra of type $SA$ are equivalent.  It
 also follows that the map $\pi_0\:[SA,SB]\to\Hom(A,B)$ is surjective,
 with kernel $\Ext(A,\pi_1SB)$.  This is often zero but not always,
 because of the fact that $\pi_1(S)=\Z/2\neq 0$.  Thus, it is almost
 correct but not quite correct to think of $SA$ as being functorial in
 $A$.
\end{remark}

\begin{example}\label{eg-moore-exponent}
 Let $p$ be a prime.  It is a standard fact that $\pi_1(S)=\Z/2$, and
 it follows using the cofibration $S\xra{p}S\to S/p$ that
 $\pi_1(S/p)=0$ except when $p=2$ in which case $\pi_1(S/2)=\Z/2$.
 Lemma~\ref{lem-moore-maps} gives a short exact sequence
 \[ \Ext(\Z/p,\pi_1(S/p)) \mra [S/p,S/p] \era \Hom(\Z/p,\Z/p). \]
 This gives $[S/p,S/p]=\Z/p$ when $p$ is odd, and $|[S/2,S/2]|=4$.  In
 particular we have $p^2.1_{S/p}=0$ in all cases.  (It is known that
 $[S/2,S/2]=\Z/4$, but we will not need that.) 
\end{example}

\begin{remark}\label{rem-moore-ring}
 As a special case of Remark~\ref{rem-moore-cons}, suppose we have a
 commutative ring $P_0$ which is free as an abelian group, together
 with an element $x\in P_0$ which is not a zero divisor.  We can the
 form a spectrum $P$ of the form $\bigoplus_{i\in I}S$ with
 $\pi_0(P)=P_0$.  We then find that $P\ot P$ is a Moore spectrum
 with $\pi_0(P\ot P)=P_0\ot P_0$, so there is a unique map
 $\mu\:P\ot P\to P$ such that $\pi_0(\mu)$ is the multiplication
 map $P_0\ot P_0\to P_0$.  This makes $P$ into a homotopy commutative
 ring spectrum.  There is also a unique map $\mu_x\:P\to P$ with
 $\pi_0(\mu_x)(a)=ax$ for all $a$.  We define $P/x$ to be the cofibre
 of $\mu_x$, and we find that $P/x$ is a Moore spectrum with
 $\pi_0(P/x)=P_0/x$.  Note that $S/n$ can be seen as an example of
 this.   
\end{remark}

\begin{definition}\label{defn-AQi}
 Let $Q$ be a set of prime numbers, and let $\ip{Q}$ be the set of
 positive integers that can be expressed as a product of primes in
 $Q$.   As usual, given an abelian group $A$ we write $A[Q^{-1}]$ for
 the group of fractions $a/m$ with $m\in\ip{Q}$, where
 $a_0/m_0=a_1/m_1$ if and only if there exists 
 $n\in\ip{Q}$ with $n(m_1a_0-m_0a_1)=0$.  Basic facts about this
 construction are reviewed in~\cite[Section 9]{st:ata}, including the
 fact that $A[Q^{-1}]$ is naturally isomorphic to $\Z[Q^{-1}]\ot A$.
\end{definition}

In Section~\ref{sec-local} we will need a Moore spectrum for
$\Z[Q^{-1}]$ (which will be denoted by $S[Q^{-1}]$).  We next give a
convenient construction of such a spectrum, using
Remark~\ref{rem-moore-ring}.

\begin{construction}\label{eg-SQi}
 Let $Q$ be a set of primes.  Any integer $n>0$ can be factored as
 $n_0n_1$, where $n_0$ is a product of primes in $Q$ and $n_1$ is a
 product of primes not in $Q$.  We write $n_Q$ for $n_0$, and call
 this the \emph{$Q$-factor} of $n$.

 Now let $m_r$ be the $Q$-factor of $r!$, and let $m_{r,s}$ be the
 $Q$-factor of $\binom{r+s}{r}$, so $m_0=m_1=1$ and
 $m_{r+s}=m_rm_sm_{r,s}$.  Let $DP_0(Q)$ be the $\Z$-linear span of the
 elements $t_r=t^r/m_r\in\Q[t]$.  These satisfy
 $t_rt_s=m_{r,s}t_{r+s}$, so $DP_0(Q)$ is a subring of $\Q[t]$ which is
 free abelian as an additive group.  There is a ring map
 $\phi\:DP_0(Q)\to\Z[Q^{-1}]$ given by $\phi(t_r)=1/m_r$, and it is easy
 to see that this is surjective.  Now put $x=t_0-t_1=1-t\in DP(R)$.
 This is not a zero divisor, and it satisfies $\phi(x)=0$, so there is
 an induced surjective homomorphism $\ov{\phi}\:DP_0(Q)/x\to\Z[Q^{-1}]$.
 In $DP_0(Q)$ we have $t_r=m_{1,r}t_{r+1}+(1-t)t_r$, and it is an
 exercise to deduce from this that $\ov{\phi}$ is an isomorphism.
 Thus, we can construct a ring spectrum $DP(Q)=\bigoplus_{n\in\N}S$
 with $\pi_0(DP(Q))=DP_0(Q)$, and a map $\mu_x\:DP(Q)\to DP(Q)$
 corresponding to multiplication by $x$, and then the cofibre of
 $\mu_x$ is a Moore spectrum for $\Z[Q^{-1}]$.
\end{construction}

We could use the above construction to make a Moore spectrum for the
ring $\Z[1/p]$.  However, in this special case it is more convenient
to use a slightly different construction which also gives a compatible
Moore spectrum for the quotient group $\Z/p^\infty=\Z[1/p]/\Z$.

\begin{definition}
 We write $S[t]$ for $\bigoplus_{n\geq 0}S$, and let $i_n$ be the
 inclusion of the $n$th summand.  Let $\mu\:S[t]\ot S[t]\to S[t]$
 be the unique map with $\mu\circ(i_n\ot i_m)=i_{n+m}$ for all
 $n,m\in\N$.  This makes $S[t]$ into a homotopy commutative ring
 spectrum with $\pi_0(S[t])=\Z[t]$ as rings.  We have a map
 $\mu_t\:S[t]\to S[t]$ with $\mu_t\circ i_n=i_{n+1}$, which induces
 multiplication by $t$ on $\pi_0$.  We also have a map
 $\dl_t\:S[t]\to S[t]$ with $\dl_t\circ i_0=0$ and
 $\dl_t\circ i_{n+1}=i_n$.  On $\pi_0$, this corresponds to the
 truncated division operation $f(t)\mapsto (f(t)-f(0))/t$.
\end{definition}

\begin{construction}\label{cons-S-pinv}
 Now fix a prime $p$.  We then have an element
 $1-pt\in\Z[t]=\pi_0(S[t])$, which is not a zero divisor, and a
 multiplication map $\mu_{1-pt}=1-p\mu_t\:S[t]\to S[t]$.  It is easy
 to identify $\Z[t]/(1-pt)$ with $\Z[1/p]$, so the spectrum
 $S[1/p]=\cof(\mu_{1-pt})$ is a Moore spectrum for $\Z[1/p]$.  Now
 define $\phi\:\Z[t]\to\Z/p^\infty=\Z[1/p]/\Z$ by
 $\phi(t^r)=p^{-1-r}+\Z$.  It is an exercise to check that this is a
 cokernel for the map $\dl_t-p=\dl_t\circ\mu_{1-pt}\:\Z[t]\to\Z[t]$,
 which is injective.  Thus, the spectrum $S/p^\infty=\cof(\dl_t-p)$ is
 a Moore spectrum for $\Z/p^\infty$.  We will also use the notation
 $S^{-1}/p^\infty$ for $\Sg^{-1}S/p^\infty$.  The octahedral axiom
 relates the (co)fibres of $\dl_t$, $\mu_{1-pt}$ and the composite
 $\dl_t\circ\mu_{1-pt}$, but the fibre of $\dl_t$ is just $S$: this
 gives a cofibration 
 \[ S^{-1}/p^\infty \to S \to S[1/p] \to S/p^\infty. \]
\end{construction}

\section{$Q$-local spectra}
\label{sec-local}

\begin{definition}
 Again let $Q$ be a set of primes, so $\ip{Q}$ be the set of positive
 integers that can be expressed as a product of primes in $Q$.
 \begin{itemize}
  \item[(a)] We say that an abelian group $A$ is $Q$-local if for all
   $q\in Q$, the map $q.1_A\:A\to A$ is an isomorphism.
  \item[(b)] We say $A$ is $Q$-torsion if for
   all $a\in A$ there exists $m\in\ip{Q}$ with $ma=0$
  \item[(c)] We say $A$ is uniformly $Q$-torsion if there exists
   $m\in\ip{Q}$ with $mA=0$ 
  \item[(d)] We say that a spectrum $X$ is $Q$-local if each homotopy group
   $\pi_i(X)$ is $Q$-local.
  \item[(e)] We say that $X$ is $Q$-torsion if each homotopy group
   $\pi_i(X)$ is $Q$-torsion.
  \item[(f)] We say that $X$ is uniformly $Q$-torsion if there exists
   $m\in\ip{Q}$ with $m.1_X=0$.
  \item[(g)] We write $\CB[Q^{-1}]$ for the full subcategory of
   $Q$-local spectra.
 \end{itemize}
\end{definition}

\begin{remark}
 An abelian group $A$ is $\{p\}$-local if and only if $p.1_A$ is an
 isomorphism.  On the other hand, $A$ is said to be $p$-local if it is
 $Q$-local, where $Q$ is the set of primes different from $p$.  This
 terminology is unfortunate, but well-established.
\end{remark}

\begin{remark}\label{rem-local-test}
 Recall that $f\:X\to Y$ is an equivalence iff
 $\pi_*(f)\:\pi_*(X)\to\pi_*(Y)$ is an isomorphism.  It follows that
 $X$ is $Q$-local iff $q.1_X\:X\to X$ is an equivalence for all
 $q\in Q$.  If so, we see that each homology group $H_i(X)$ is
 $Q$-local.  Conversely, if $X$ is bounded below and all homology
 groups are $Q$-local, then we can use Lemma~\ref{lem-bb-hurewicz} to
 see that $X$ is $Q$-local.
\end{remark}

\begin{remark}
 If $Q$ is the set of all primes then $\Z[Q^{-1}]=\Q$.  We use the
 notation $A\Q$ for $A[Q^{-1}]$ and $\CB\Q$ for $\CB[Q^{-1}]$.

 Now suppose instead that we fix a prime $p$ and let $Q$ be the set of
 all other primes.  We then use the notation $A_{(p)}$ for $A[Q^{-1}]$
 and $\CB_{(p)}$ for $\CB[Q^{-1}]$.
\end{remark}

\begin{definition}
 We also use the notation $L_QS$ for $S[Q^{-1}]$, and we define $C_QS$
 to be the fibre of the unit map $S\to L_QS$.  Given an arbitrary
 spectrum $X$ we write $L_QX$ or $X[Q^{-1}]$ for the spectrum
 $L_QS\ot X=S[Q^{-1}]\ot X$, and we write $C_QX$ for
 $C_QS\ot X$.  The fibration $C_QS\xra{\zt}S\xra{\eta}L_QS$ gives
 a natural fibration $C_QX\xra{\zt}X\xra{\eta}L_QX$.
\end{definition}

\begin{proposition}\label{prop-LQ}\leavevmode
 \begin{itemize}
  \item[(a)] $L_QX$ is always $Q$-local, with
   $\pi_*(L_QX)=\pi_*(X)[Q^{-1}]$.   
  \item[(b)] $C_QX$ is always $Q$-torsion.
  \item[(c)] A spectrum $X$ is $Q$-local iff $C_QX=0$ iff the map
   $\eta\:X\to L_QX$ is an equivalence.
  \item[(d)] A spectrum $X$ is $Q$-torsion iff $L_QX=0$ iff the map
   $\zt\:C_QX\to X$ is an equivalence.
 \end{itemize}
\end{proposition}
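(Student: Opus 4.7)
The plan is to prove (a) first, then derive (b) from (a), then obtain (c) and (d) from (a) and (b). For (a), I would separate the $Q$-locality assertion from the $\pi_*$ formula. The $Q$-locality is immediate: for each $q\in Q$, the self-map $q\cdot 1_{S[Q^{-1}]}$ of the Moore spectrum $S[Q^{-1}]$ induces multiplication by $q$ on $\pi_0=\Z[Q^{-1}]$, which is an isomorphism, so by Lemma~\ref{lem-moore-iso} the map itself is an equivalence. Tensoring with $X$ shows $q\cdot 1_{L_QX}$ is an equivalence, and Remark~\ref{rem-local-test} concludes that $L_QX$ is $Q$-local.

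For the $\pi_*$ formula, I would use the explicit construction of $S[Q^{-1}]$ from Construction~\ref{eg-SQi} as the cofibre of $\mu_x\:DP(Q)\to DP(Q)$, where $x=1-t$. Tensoring this cofibration with $X$ yields a cofibration whose long exact sequence in homotopy is what we need. Since $DP(Q)=\bigoplus_n S$ has free abelian $\pi_0=DP_0(Q)$, we have $\pi_*(DP(Q)\ot X)\cong DP_0(Q)\ot\pi_*(X)$ with $\mu_x\ot 1_X$ acting as multiplication by $x$. Because $DP_0(Q)/x\cong\Z[Q^{-1}]$ is flat over $\Z$, tensoring the short exact sequence $0\to DP_0(Q)\xra{x}DP_0(Q)\to\Z[Q^{-1}]\to 0$ with $\pi_{i-1}(X)$ keeps it short exact, so the would-be Tor contribution vanishes and one reads off $\pi_i(L_QX)\cong\Z[Q^{-1}]\ot\pi_i(X)=\pi_i(X)[Q^{-1}]$.

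For (b), the long exact sequence of the cofibration $C_QX\to X\to L_QX$, combined with the identification from (a), places $\pi_i(C_QX)$ in a short exact sequence with $\cok(\pi_{i+1}(X)\to\pi_{i+1}(X)[Q^{-1}])$ on the left and $\ker(\pi_i(X)\to\pi_i(X)[Q^{-1}])$ on the right. Both outer terms are $Q$-torsion: elements of the kernel are annihilated by some $n\in\ip{Q}$, and each representative $a/m$ of the cokernel is killed by $m$. An extension of $Q$-torsion abelian groups is $Q$-torsion (clear the image in the quotient, then clear the resulting element of the subgroup), so $\pi_i(C_QX)$ is $Q$-torsion.

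Parts (c) and (d) are then formal. For (c), $X$ is $Q$-local iff each $\pi_i(X)$ is $Q$-local iff each map $\pi_i(X)\to\pi_i(X)[Q^{-1}]=\pi_i(L_QX)$ is an isomorphism iff $\eta$ is an equivalence iff $C_QX=0$. Similarly, $X$ is $Q$-torsion iff each $\pi_i(X)[Q^{-1}]$ vanishes iff $L_QX=0$ iff $\zt\:C_QX\to X$ is an equivalence, giving (d). The main technical obstacle is the flatness argument in (a); the remaining arguments are routine homological bookkeeping.
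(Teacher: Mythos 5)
Your proof is correct, but it takes a different route from the paper at two points, and it is worth noting what each buys.

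For the $\pi_*$ formula in part (a), the paper argues abstractly: both $\pi_*(X)[Q^{-1}]$ and $\pi_*(L_QX)$ are homology theories (the former because $\Z[Q^{-1}]$ is flat), there is a natural transformation between them, and it is an isomorphism for $X=S$, hence always. You instead compute directly from the cofibration $DP(Q)\xra{\mu_x}DP(Q)\to S[Q^{-1}]$ of Construction~\ref{eg-SQi}, using that $DP(Q)$ is a wedge of spheres and that $\Z[Q^{-1}]$ is flat to kill the $\Tor$ term. Both are valid; the paper's argument is model-independent (it would work for any Moore spectrum for $\Z[Q^{-1}]$, not just the one from Construction~\ref{eg-SQi}), while yours is more hands-on and makes explicit which cofibre sequence produces the answer.

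For part (b) the divergence is more substantial. You prove (b) directly from (a) via the long exact sequence of $C_QX\to X\to L_QX$, identifying $\pi_i(C_QX)$ as an extension of the kernel of $\pi_i(X)\to\pi_i(X)[Q^{-1}]$ by the cokernel of $\pi_{i+1}(X)\to\pi_{i+1}(X)[Q^{-1}]$, and observing both are $Q$-torsion. The paper instead proves (c) and (d) first and then deduces (b) formally: $L_QX$ is local so $C_QL_QX=0$; but $C_QL_QX\simeq L_QC_QX$ by commutativity of the smash factors, so $L_QC_QX=0$, hence $C_QX$ is $Q$-torsion by (d). Your argument is more elementary (pure homotopy-group bookkeeping); the paper's is slicker and generalises better to localisations where the analogue of (a) fails or is harder. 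Both are fine here, and your ordering (a)$\Rightarrow$(b)$\Rightarrow$(c),(d) is internally consistent since (c) and (d) in your write-up depend only on (a).
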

\begin{proof}
 We will prove~(a) then~(c) then~(d) then~(b).
 \begin{itemize}
  \item[(a)] Consider an element $q\in Q$, so $q.1_{S[Q^{-1}]}$ is an
   equivalence, so $(q.1_{S[Q^{-1}]})\ot 1_X$ is an equivalence.
   As all relevant constructions are additive functors, this is the
   same as $q.1_{S[Q^{-1}]\ot X}=q.1_{L_QX}$.  It follows that
   $L_QX$ is $Q$-local, so each homotopy group $\pi_i(L_QX)$ is
   $Q$-local.  It follows that the map $\eta_*\:\pi_i(X)\to\pi_i(L_QX)$
   extends uniquely to give a map
   $\eta'\:\pi_i(X)[Q^{-1}]=\Z[Q^{-1}]\ot\pi_i(X)\to\pi_i(L_QX)$.  As
   $\Z[Q^{-1}]$ is torsion-free, the functor $\Z[Q^{-1}]\ot(-)$ is
   exact, so both $\Z[Q^{-1}]\ot\pi_*(X)$ and $\pi_*(L_QX)$ are
   homology theories.  The map $\eta'$ is a morphism of homology
   theories that is an isomorphism when $X=S$, so it is always an
   isomorphism, so $\pi_*(L_QX)=\pi_*(X)[Q^{-1}]$.  
  \item[(c)] Given an abelian group $A$, it is
   standard~\cite[Proposition 9.14]{st:ata} that $A$ is
   $Q$-local if and only if the natural map $A\to A[Q^{-1}]$ is an
   isomorphism.  Thus, a spectrum $X$ is $Q$-local if and only if the
   natural map $\pi_*(X)\to\pi_*(X)[Q^{-1}]=\pi_*(L_QX)$ is an
   isomorphism, if and only if the map $X\to L_QX$ is an equivalence,
   if and only if $C_QX=0$.
  \item[(d)] It is standard that an abelian group $A$ is $Q$-torsion
   if and only if $A[Q^{-1}]=0$.  Thus a spectrum $X$ is $Q$-torsion
   if and only if the graded group $\pi_*(X)[Q^{-1}]=\pi_*(L_QX)$ is
   trivial, if and only if the spectrum $L_QX$ is zero, if and only if
   the map $\zt\:C_QX\to X$ is an equivalence.
  \item[(b)] As $L_QX$ is local we have $C_QL_QX=0$ by~(c).  However,
   \[ C_QL_QX=C_QS\ot L_QS\ot X\simeq
         L_QS\ot C_QS\ot X=L_QC_QX,
   \]
   so $L_QC_QX=0$.  Using~(d) we conclude that $C_QX$ is $Q$-torsion.   
 \end{itemize}
\end{proof}

\begin{corollary}\label{cor-bousfield-LQ}\leavevmode
 \begin{itemize}
  \item[(a)] If $X$ is $Q$-torsion and $Y$ is $Q$-local then $[X,Y]=0$
  \item[(b)] Conversely, if $X$ is such that $[X,Y]=0$ for all
   $Q$-local $Y$, then $X$ is $Q$-torsion.
  \item[(c)] Similarly, if $Y$ is such that $[X,Y]=0$ for all
   $Q$-torsion $X$, then $Y$ is $Q$-local.
  \item[(d)] If $X\in\CB$ and $Y\in\CB[Q^{-1}]$ then the map
   $\eta^*\:[L_QX,Y]\to [X,Y]$ is an isomorphism.
  \item[(e)] Thus, $\CB[Q^{-1}]$ is the localisation of $\CB$ with
   respect to $S[Q^{-1}]$ (in the sense of Bousfield~\cite{bo:lspe},
   also explained in~\cite[Section 3]{hopast:ash}).  The
   corresponding localisation functor is $L_Q$, and the corresponding
   acyclisation functor is $C_Q$.
 \end{itemize}
\end{corollary}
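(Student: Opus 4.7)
The five parts split into two strands, both flowing from the fibre sequence $C_QX\xra{\zt}X\xra{\eta}L_QX$ and the properties of it recorded in Proposition~\ref{prop-LQ}. I would prove (a), then (d), then (b) and (c) in parallel, and finally deduce (e).

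For (a), given $f\:X\to Y$ with $X$ $Q$-torsion and $Y$ $Q$-local, I would use naturality of $\eta$ to compare $\eta_Y\circ f$ with $L_Qf\circ\eta_X$. By Proposition~\ref{prop-LQ}(d), $L_QX=0$, so $L_Qf\circ\eta_X=0$; by Proposition~\ref{prop-LQ}(c), $\eta_Y$ is an equivalence, forcing $f=0$. Part (d) then follows at once: applying $[-,Y]$ to the triangle $C_QX\to X\to L_QX\to\Sg C_QX$ and using (a) to kill both $[C_QX,Y]$ and $[\Sg C_QX,Y]$ (since $C_QX$ is $Q$-torsion by Proposition~\ref{prop-LQ}(b) and $Y$ is $Q$-local) collapses the long exact sequence to the claimed isomorphism $\eta^*\:[L_QX,Y]\to[X,Y]$.

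The converses (b) and (c) rest on the following standard triangulated-category fact: in a distinguished triangle $A\xra{f}B\xra{g}C\xra{h}\Sg A$, if $g=0$ then $f$ is split epi (so $B$ is a retract of $A$), while if $f=0$ then $g$ is split mono (so $B$ is a retract of $C$). Both statements follow by inspecting the appropriate long exact sequence of maps involving $B$ and lifting $1_B$. For (b), setting $Y=L_QX$ in the hypothesis gives $\eta_X=0$, so the triangle $C_QX\to X\to L_QX\to\Sg C_QX$ exhibits $X$ as a retract of the $Q$-torsion spectrum $C_QX$; $Q$-torsion is manifestly preserved by retracts on homotopy groups, so $X$ is $Q$-torsion. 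For (c), taking $X=C_QY$ yields $\zt_Y=0$, so $Y$ is a retract of the $Q$-local spectrum $L_QY$, and $Q$-locality is likewise closed under retracts.

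Finally, (e) repackages the preceding parts in Bousfield's language. The $S[Q^{-1}]$-acyclic spectra are precisely the $Q$-torsion ones, since $S[Q^{-1}]\ot X=L_QX$ vanishes exactly when $X$ is $Q$-torsion by Proposition~\ref{prop-LQ}(d); statement (a) is then the required orthogonality, (d) exhibits $L_Q$ as left adjoint to the inclusion $\CB[Q^{-1}]\hookrightarrow\CB$ with unit $\eta$, and $C_Q$ plays the role of the acyclisation functor. I anticipate no serious obstacle; the only minor subtlety is the triangulated splitting lemma invoked for (b) and (c), which is entirely routine.
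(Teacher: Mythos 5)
Your proposal is correct and follows essentially the same route as the paper's proof: (a) via naturality of $\eta$ together with $L_QX=0$; (d) by applying $[-,Y]$ to the triangle and invoking (a); (b) and (c) by the retract argument after specialising $Y=L_QX$ (resp.\ $X=C_QY$); and (e) as a restatement. The only cosmetic differences are that you prove (d) before (b) and (c), and you spell out the triangulated splitting lemma that the paper leaves implicit.
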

\begin{proof}\leavevmode
 \begin{itemize}
  \item[(a)] Suppose we have $f\:X\to Y$ where $X$ is $X$-torsion and
   $Y$ is $Q$-local, so $L_QX=0$ and the map $Y\to L_QY$ is an
   equivalence.  We have a commutative diagram 
   \begin{center}
    \begin{tikzcd}
     X \arrow[r,"\eta"] \arrow[d,"f"'] &
     L_QX = 0 \arrow[d,"L_Qf"] \\
     Y \arrow[r,"\simeq","\eta"'] & L_QY,
    \end{tikzcd}
   \end{center}
   from which it is clear that $f=0$.
  \item[(b)] If $X$ is as described, then in particular the natural
   map $X\to L_QX$ must be zero, so the fibre $C_QX\to X$ is a split
   epimorphism, so $X$ is a retract of the $Q$-torsion spectrum
   $C_QX$, so $X$ is $Q$-torsion.
  \item[(c)] If $Y$ is as described, then in particular the natural
   map $C_QY\to Y$ must be zero, so the cofibre $X\to L_QX$ is a split
   monomorphism, so $X$ is a retract of the $Q$-local spectrum
   $L_QX$, so $X$ is $Q$-local.
  \item[(d)] The cofibration $C_QX\to X\xra{\eta}L_QX$ gives an exact
   sequence $[\Sg C_QX,Y]\to [L_QX,Y]\xra{\eta^*}[X,Y]\to [C_QX,Y]$ in which
   the first and last terms are zero by~(a), so $\eta^*$ is an isomorphism. 
  \item[(e)] The required axioms are precisely what we have proved. 
 \end{itemize}
\end{proof}

\begin{remark}\label{rem-local-homology}
 The Eilenberg-MacLane spectrum $H$ has $\pi_0(H)=\Z$ and $\pi_i(H)=0$
 for $i\neq 0$.  It follows that the spectrum
 $H[Q^{-1}]=H\ot S[Q^{-1}]$ has $\pi_0(H[Q^{-1}])=\Z[Q^{-1}]$ and
 $\pi_i(H[Q^{-1}])=0$ for $i\neq 0$.  In other words, $H[Q^{-1}]$ is
 the Eilenberg-MacLane spectrum for the ring $\Z[Q^{-1}]$.  It follows
 that 
 \[ H_*(X)[Q^{-1}] = \pi_*(H\ot X)[Q^{-1}] = 
     \pi_*(H\ot X\ot S[Q^{-1}]) = \pi_*(H[Q^{-1}]\ot X) =
    H_*(X;\Z[Q^{-1}]).
 \]
\end{remark}

\begin{definition}\label{defn-CW-local}
 Let $X$ be a bounded below $Q$-local spectrum.  By a \emph{$Q$-local
  CW structure} on $X$ we mean a diagram
 \[ \dotsb X_{-2} \to X_{-1} \to X_0 \to X_1 \to X_2 \to \dotsb \to X
 \]
 such that 
 \begin{itemize}
  \item[(a)] $X_i=0$ for $i\ll 0$.
  \item[(b)] The cofibre $X_i/X_{i-1}$ is (equivalent to) a spectrum
   of the form $\Sg^i(T_i)_+\ot S[Q^{-1}]$ for some set $T_i$.
  \item[(c)] The cofibre $X/X_i$ has $\pi_j(X/X_i)=0$ for $j\leq i$.  
 \end{itemize}
 We say that the structure is of \emph{finite type} if each set $T_i$
 is finite.  We say that the structure is \emph{finite} if in addition
 we have $T_i=\emptyset$ for almost all $i$.
\end{definition}

\begin{construction}\label{cons-CW-step-local}
 Let $X$ be a spectrum such that the group $\pi_i(X)[Q^{-1}]=0$ for
 $i<b$.  This means that the spectrum $X[Q^{-1}]$ has
 $\pi_i(X[Q^{-1}])=0$ for $i<b$, so we can apply the Hurewicz theorem
 to see that $H_i(X[Q^{-1}])=0$ for $i<b$ and
 $H_b(X[Q^{-1}])=\pi_b(X[Q^{-1}])$, or equivalently
 $H_b(X)[Q^{-1}]=\pi_b(X)[Q^{-1}]$.  If $\pi_b(X)[Q^{-1}]$ is a free
 module over $\Z[Q^{-1}]$, we choose a family of elements
 $(u'_t)_{t\in T}$ in $\pi_b(X)[Q^{-1}]$ that form a basis.  If
 $\pi_b(X)[Q^{-1}]$ is not a free module, we choose a family of
 elements $(u'_t)_{t\in T}$ in $\pi_b(X)[Q^{-1}]$ that generates
 $\pi_b(X)[Q^{-1}]$ and has cardinality as small as possible.  Each
 element $u'_t$ can be expressed as $u_t/m_t$ for some element
 $u_t\in\pi_b(X)$ and $m_t\in\ip{Q}$, and the images of these elements
 $u_t$ will again form a basis or generating set.

 The maps $u_t\:S^b\to X$ can be assembled to give a map $u\:W\to X$,
 where $W=\bigoplus_{t\in T}S^b=T_+\ot S^b$, and we write $Y$ for
 the cofibre of $u$.
\end{construction}

\begin{lemma}\label{lem-CW-step-local}
 With notation as in Construction~\ref{cons-CW-step-local}:
 \begin{itemize}
  \item[(a)] For $i\leq b$ we have 
   $\pi_i(Y)[Q^{-1}]=H_i(Y)[Q^{-1}]=0$.
  \item[(b)] For $i>b+1$ we have $H_i(Y)[Q^{-1}]=H_i(X)[Q^{-1}]$.
  \item[(c)] If $\pi_b(X)[Q^{-1}]$ is a free module over $\Z[Q^{-1}]$
   then $H_i(Y)[Q^{-1}]=H_i(X)[Q^{-1}]$ for $i\geq b+1$.
  \item[(d)] If $H_{b+1}(X)[Q^{-1}]$ is a free module over
   $\Z[Q^{-1}]$ then so is $H_{b+1}(Y)[Q^{-1}]$ .
  \item[(e)] If the modules $H_b(X)[Q^{-1}]=\pi_b(X)[Q^{-1}]$ and
   $H_{b+1}(X)[Q^{-1}]$ are both finitely generated over $\Z[Q^{-1}]$,
   then so is $H_{b+1}(Y)[Q^{-1}]$. 
 \end{itemize}
\end{lemma}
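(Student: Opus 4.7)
The plan is to imitate the proof of Lemma~\ref{lem-CW-step} essentially line by line, replacing $\Z$ by $\Z[Q^{-1}]$ throughout. Two general facts will do most of the work: the functor $(-)[Q^{-1}]=\Z[Q^{-1}]\ot(-)$ is exact on abelian groups (so it carries the long exact sequences in $\pi_*$ and $H_*$ arising from the cofibration $W\to X\to Y$ to long exact sequences of $\Z[Q^{-1}]$-modules), and $\Z[Q^{-1}]$ is a principal ideal domain, so submodules of free $\Z[Q^{-1}]$-modules are again free.

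First I would identify the localised terms: $\pi_i(W)[Q^{-1}]=0$ for $i\neq b$, while $\pi_b(W)[Q^{-1}]=\bigoplus_{t\in T}\Z[Q^{-1}]$ is free. The induced map $\pi_b(W)[Q^{-1}]\to\pi_b(X)[Q^{-1}]$ sends the $t$-th generator to $u_t=m_tu'_t$; since $m_t\in\ip{Q}$ is a unit in $\Z[Q^{-1}]$, the images span the same $\Z[Q^{-1}]$-submodule as the $u'_t$, so this map is surjective by the construction (and an isomorphism when $\pi_b(X)[Q^{-1}]$ is free and a basis was chosen). Claims~(a)--(c) now drop out as in Lemma~\ref{lem-CW-step}: surjectivity gives $\pi_b(Y)[Q^{-1}]=0$, the bounded-below Hurewicz theorem (Lemma~\ref{lem-bb-hurewicz}) applied to $Y[Q^{-1}]$ promotes this to $H_i(Y)[Q^{-1}]=0$ for $i\leq b$ (using Remark~\ref{rem-local-homology} to identify $H_*(Y)[Q^{-1}]$ with $H_*(Y[Q^{-1}])$); vanishing of $H_i(W)[Q^{-1}]$ outside degree $b$ gives~(b); and the chosen basis collapses the boundary term to yield~(c).

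For~(d) and~(e), I would focus on the ``interesting'' portion
\[ H_{b+1}(X)[Q^{-1}] \mra H_{b+1}(Y)[Q^{-1}] \to H_b(W)[Q^{-1}] \xra{u_*} H_b(X)[Q^{-1}] \]
of the localised long exact sequence, and set $F=\ker(u_*)$. Then $F$ is a submodule of the free $\Z[Q^{-1}]$-module $\pi_b(W)[Q^{-1}]$, hence free, so the short exact sequence $H_{b+1}(X)[Q^{-1}]\mra H_{b+1}(Y)[Q^{-1}]\era F$ splits and~(d) follows. For~(e), the minimality clause in Construction~\ref{cons-CW-step-local} lets us take $T$ finite when $\pi_b(X)[Q^{-1}]$ is finitely generated; then $\pi_b(W)[Q^{-1}]$ and the submodule $F$ of a finitely generated module over the PID $\Z[Q^{-1}]$ are both finitely generated, yielding finite generation of $H_{b+1}(Y)[Q^{-1}]$. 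There is no genuine obstacle here: the only subtlety is the mismatch between $u_t$ and $u'_t=u_t/m_t$, which is disposed of in one line by the unit status of $m_t$ in $\Z[Q^{-1}]$; everything else is a verbatim translation of the integral argument.
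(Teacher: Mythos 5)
Your proposal follows essentially the same route as the paper: localise the cofibration $W\to X\to Y$, use exactness of $\Z[Q^{-1}]\ot(-)$ to get localised long exact sequences, and invoke the PID property of $\Z[Q^{-1}]$ for the freeness of kernels. The paper states this more tersely ("in the same way as Lemma~\ref{lem-CW-step}") while you helpfully spell out the one point it glosses over, namely that $m_t\in\ip{Q}$ is a unit so $u_t$ and $u'_t$ generate the same submodule after localisation.
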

\begin{proof}\leavevmode
 The cofibration $W\to X\to Y$ gives a cofibration
 $W[Q^{-1}]\to X[Q^{-1}]\to Y[Q^{-1}]$, which gives exact sequences
 $\pi_i(W)[Q^{-1}]\to\pi_i(X)[Q^{-1}]\to\pi_i(Y)[Q^{-1}]\to\pi_{i-1}(W)[Q^{-1}]$ 
 and
 $H_i(W)[Q^{-1}]\to H_i(X)[Q^{-1}]\to H_i(Y)[Q^{-1}]\to H_{i-1}(W)[Q^{-1}]$.
 These can be used to prove the stated claims, in the same way as in
 Lemma~\ref{lem-CW-step}.  (We need to use the fact that any submodule
 of a free $\Z[Q^{-1}]$-module is again free, which holds because
 $\Z[Q^{-1}]$ is a principal ideal domain.)
\end{proof}

\begin{proposition}\label{prop-CW-local}\leavevmode
 Let $X$ be a bounded below $Q$-local spectrum.  Then we can construct 
 \begin{itemize}
  \item A spectrum $X'$ and a map $f\:X'\to X$ inducing an equivalence 
   $X'[Q^{-1}]\to X$; and
  \item A CW structure $\{X'_n\}_{n\in\Z}$
   (with $X'_n/X'_{n-1}=(T_n)_+\ot S^n$ say)
 \end{itemize}
 such that:
 \begin{itemize}
  \item[(a)] If $\pi_i(X)[Q^{-1}]=0$ for all $i<b$, then $X'_i=0$ and
   $T_i=\emptyset$ for all $i<b$. 
  \item[(b)] If $H_t(X)[Q^{-1}]$ is a free $\Z[Q^{-1}]$-module and
   $H_i(X)[Q^{-1}]=0$ for all $i>t$ then $X'_i=X'$ for $i\geq t$, and
   $T_i=\emptyset$ for $i>t$.  (In particular, this works if
   $H_i(X)[Q^{-1}]=0$ for all $i\geq t$.)
  \item[(c)] If $H_i(X)[Q^{-1}]$ is finitely generated for all $i$
   then $T_i$ is finite for all $i$.  
  \item[(d)] If $\bigoplus_iH_i(X)[Q^{-1}]$ is finitely generated, then
   $\coprod_iT_i$ is finite.
 \end{itemize}
\end{proposition}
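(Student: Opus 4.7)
The plan is to adapt the proof of Proposition~\ref{prop-CW}, replacing Construction~\ref{cons-CW-step} and Lemma~\ref{lem-CW-step} by their $Q$-local analogues (Construction~\ref{cons-CW-step-local} and Lemma~\ref{lem-CW-step-local}). The key difference is that the iterated construction will not terminate at $X$ itself on the nose, but will produce an auxiliary spectrum $X'$ equipped with a map $X'\to X$ that becomes an equivalence only after $Q$-localisation.

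Assuming $X\neq 0$, apply Lemma~\ref{lem-bb-hurewicz} to $X=L_QX$ to let $b$ be the smallest integer with $\pi_b(X)[Q^{-1}]\neq 0$ (equivalently the smallest with $H_b(X)[Q^{-1}]\neq 0$). Put $X^k=X$ for $k\leq b$, and inductively apply Construction~\ref{cons-CW-step-local} to produce cofibrations $W_k\to X^k\to X^{k+1}$ with $W_k=(T_k)_+\ot S^k$, where at stage $k$ one kills (lifts of) a minimal generating set for $\pi_k(X^k)[Q^{-1}]$, chosen to be a basis whenever this $\Z[Q^{-1}]$-module is free. Lemma~\ref{lem-CW-step-local}(a) maintains the inductive hypothesis $\pi_i(X^k)[Q^{-1}]=0$ for $i<k$. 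Setting $X^\infty$ to be the sequential colimit, we obtain $\pi_i(X^\infty)[Q^{-1}]=\colim_k\pi_i(X^k)[Q^{-1}]=0$ for every $i$, so $L_QX^\infty=0$.

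Feed this tower into Construction~\ref{cons-flip} with $X^{-\infty}=X$: define $X'_i=\fib(X\to X^{i+1})$ and $X'=\fib(X\to X^\infty)$, with natural map $f\:X'\to X$. The cofibre of $f$ is $X^\infty$, which is $Q$-acyclic, so $L_Qf$ is an equivalence. Construction~\ref{cons-flip} automatically provides $X'_i/X'_{i-1}=W_i=(T_i)_+\ot S^i$, and condition~(c) of Definition~\ref{defn-CW} for $X'$ holds because $X'/X'_i$ is the sequential colimit of iterated cofibres built from the $i$-connective wedges $W_j$ with $j>i$.

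It remains to verify (a)--(d). Claim~(a) is immediate from the choice $X^k=X$ for $k\leq b$. Claim~(b) is the subtlest point and mirrors the corresponding argument in Proposition~\ref{prop-CW}: using Lemma~\ref{lem-CW-step-local}(b)--(d) one shows inductively that for $k\leq t$ the module $H_t(X^k)[Q^{-1}]$ remains free over $\Z[Q^{-1}]$ and $H_i(X^k)[Q^{-1}]=0$ for $i>t$, so that at stage $k=t$ the Hurewicz isomorphism identifies $\pi_t(X^t)[Q^{-1}]$ with a free $\Z[Q^{-1}]$-module; Construction~\ref{cons-CW-step-local} is then applied with a basis, yielding $H_*(X^{t+1})[Q^{-1}]=0$ and hence $L_QX^{t+1}=0$ (by Lemma~\ref{lem-bb-hurewicz} applied to the bounded-below spectrum $L_QX^{t+1}$). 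Consequently $X^k=X^{t+1}$ and $T_k=\emptyset$ for all $k>t$, which forces $X^\infty=X^{t+1}$ and $X'_i=X'$ for $i\geq t$. Claims~(c) and~(d) follow from the finite-generation part of Lemma~\ref{lem-CW-step-local}(e) and from combining (b) with (c). The main obstacle is the bookkeeping for~(b): one must use Lemma~\ref{lem-CW-step-local}(d) to propagate freeness of $H_t$ across the step $k=t-1$, so that the basis version of Construction~\ref{cons-CW-step-local} is available at stage $k=t$ to collapse everything.
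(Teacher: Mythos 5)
Your proposal is correct and follows essentially the same route as the paper's own proof: define $X^k=X$ for $k\leq b$, iterate Construction~\ref{cons-CW-step-local} to build a tower $X^k\to X^{k+1}$ with fibres $W_k$, observe that $X^\infty[Q^{-1}]=0$, and then feed the tower into Construction~\ref{cons-flip} to produce the filtration $X'_i=\fib(X\to X^{i+1})$ on $X'=\fib(X\to X^\infty)$. The paper abbreviates the verification of (a)--(d) to a reference back to Proposition~\ref{prop-CW}; your detailed account of (b) (propagating freeness of $H_t(\cdot)[Q^{-1}]$ via Lemma~\ref{lem-CW-step-local}(d) so that $H_*(X^{t+1})[Q^{-1}]=0$ and therefore $X^\infty=X^{t+1}$) is exactly what is meant by that reference.
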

\begin{proof}
 If $X[Q^{-1}]=0$ then everything is clear (with $X'=0$, and $X'_n=0$
 and $T_n=\emptyset$ for all $n$).  Otherwise, let $b$ be the smallest
 integer such that $\pi_b(X)[Q^{-1}]\neq 0$.  Put $X^k=X$ for
 $k\leq b$.  Then apply Construction~\ref{lem-CW-step-local}
 repeatedly to give cofibrations $W_k\to X^k\to X^{k+1}$ where
 $W_k=(T_k)_+\ot S^k$.  Let $X^\infty$ be the homotopy colimit of
 the spectra $X^k$.  Then $\pi_i(X^\infty)[Q^{-1}]$ 
 is the colimit of the groups $\pi_i(X^k)[Q^{-1}]$, which are zero for $k>i$,
 so $\pi_i(X^\infty)[Q^{-1}]=0$ for all $i$, so $X^\infty[Q^{-1}]=0$.
 Now put $X'_i=\fib(X\to X^{i+1})$ as in Construction~\ref{cons-flip},
 and $X'=\fib(X\to X^\infty)$, which is the homotopy colimit of the
 spectra $X'_i$.  We then have $X'_i/X'_{i-1}=W_i$, so the spectra
 $X'_i$ give a CW structure on $X'$.  We have a cofibration
 $X'\xra{f}X\to X^\infty$ giving a cofibration
 $X'[Q^{-1}]\to X[Q^{-1}]=X\to X^\infty[Q^{-1}]=0$, showing that
 $X'[Q^{-1}]=X$.  Properties~(a) to~(d) are proved in the same way as
 Proposition~\ref{prop-CW}. 
\end{proof}

\begin{corollary}\label{cor-CW-local}\leavevmode
 Any bounded below $Q$-local spectrum $X$ admits a $Q$-local CW structure
 $\{X_n\}_{n\in\Z}$ (with $X_n/X_{n-1}=\Sg^n (T_n)_+\ot S[Q^{-1}]$ say) such
 that the following additional properties are satisfied:
 \begin{itemize}
  \item[(a)] If $\pi_i(X)=0$ for all $i<b$, then $X_i=0$ and
   $T_i=\emptyset$ for all $i<b$. 
  \item[(b)] If $H_t(X)$ is a free module over $\Z[Q^{-1}]$ and
   $H_i(X)=0$ for all $i>t$ then $X_i=X$ for $i\geq t$, and
   $T_i=\emptyset$ for $i>t$.  (In particular, this works if
   $H_i(X)=0$ for all $i\geq t$.)
  \item[(c)] If $H_i(X)$ is finitely generated over $\Z[Q^{-1}]$ for
   all $i$ then $T_i$ is finite for all $i$.  
  \item[(d)] If $\bigoplus_iH_i(X)$ is finitely generated over
   $\Z[Q^{-1}]$, then $\coprod_iT_i$ is finite.
 \end{itemize}
\end{corollary}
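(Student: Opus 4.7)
The plan is to reduce the corollary to Proposition~\ref{prop-CW-local} by tensoring an integral CW approximation with $S[Q^{-1}]$. Since $X$ is bounded below, Proposition~\ref{prop-CW-local} supplies a spectrum $X'$, a map $f\:X'\to X$ inducing an equivalence $X'[Q^{-1}]\xra{\simeq}X$, and a CW structure $\{X'_n\}$ in which $X'_n/X'_{n-1}\simeq(T_n)_+\ot S^n$ and $\pi_j(X'/X'_n)=0$ for $j\leq n$. I then define $X_n:=X'_n\ot S[Q^{-1}]$, with structure maps induced from those of $\{X'_n\}$; the colimit is $X'\ot S[Q^{-1}]\simeq X$.

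Next I check the three axioms of Definition~\ref{defn-CW-local}. Axiom~(a) is immediate from $X'_n=0$ for $n\ll 0$. For~(b), tensoring the cofibration $X'_{n-1}\to X'_n\to(T_n)_+\ot S^n$ with $S[Q^{-1}]$ yields a cofibration $X_{n-1}\to X_n\to \Sg^n(T_n)_+\ot S[Q^{-1}]$, as required. For~(c), since $S[Q^{-1}]\ot(-)$ preserves cofibres and $X\simeq X'\ot S[Q^{-1}]$, we have $X/X_n\simeq (X'/X'_n)\ot S[Q^{-1}]$, so $\pi_j(X/X_n)=\pi_j(X'/X'_n)[Q^{-1}]=0$ for $j\leq n$.

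The additional properties then transcribe directly. Because $X$ is $Q$-local, Remark~\ref{rem-local-test} gives $\pi_i(X)=\pi_i(X)[Q^{-1}]$ and $H_i(X)=H_i(X)[Q^{-1}]$, and the equivalence $X'[Q^{-1}]\simeq X$ identifies $\pi_i(X')[Q^{-1}]$ and $H_i(X')[Q^{-1}]$ with $\pi_i(X)$ and $H_i(X)$ respectively. Thus the hypotheses of~(a)--(d) in the corollary match those of~(a)--(d) in Proposition~\ref{prop-CW-local}, and the conclusions transfer: the indexing sets $T_n$ are the same, and $X_n=X$ is equivalent to $X'_n\ot S[Q^{-1}]\simeq X'\ot S[Q^{-1}]$, which follows from $X'_n=X'$ in the integral case. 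I do not expect a substantive obstacle here; the real work lies in Proposition~\ref{prop-CW-local}, and the corollary is essentially a formal $Q$-localisation.
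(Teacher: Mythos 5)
Your proposal is correct and takes essentially the same approach as the paper: construct $X'_n$ via Proposition~\ref{prop-CW-local} and set $X_n=X'_n[Q^{-1}]=X'_n\ot S[Q^{-1}]$. The paper's own proof is a one-line version of this; your write-up simply verifies the CW axioms and the translation of hypotheses and conclusions in more detail.
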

\begin{proof}
 Construct $X'_n$ as in Proposition~\ref{prop-CW-local} and put
 $X_n=X'_n[Q^{-1}]$. 
\end{proof}

\begin{proposition}\label{prop-finite-local}
 Let $X$ be a bounded below $Q$-local spectrum.  Then the following
 are equivalent: 
 \begin{itemize}
  \item[(a)] $X$ admits a finite $Q$-local CW structure.
  \item[(b)] $X$ lies in the thick subcategory generated by 
   $S[Q^{-1}]$.
  \item[(c)] $X$ is compact in $\CB[Q^{-1}]$
  \item[(d)] $X$ is strongly dualisable in $\CB[Q^{-1}]$
  \item[(e)] $\bigoplus_iH_*(X)$ is finitely generated over $\Z[Q^{-1}]$.
  \item[(f)] There is a finite spectrum $X'$ with $L_QX'\simeq X$.
 \end{itemize}
\end{proposition}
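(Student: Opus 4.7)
The plan is to mirror the structure of Proposition~\ref{prop-finite-global}, showing (a)~$\Rightarrow$~(b)~$\Leftrightarrow$~(c)~$\Leftrightarrow$~(d), then (b)~$\Rightarrow$~(e), and finally (e)~$\Rightarrow$~(a)~$\wedge$~(f) together with (f)~$\Rightarrow$~(b) to close the loop. The new feature compared to the global case is the inclusion of (f), but this comes almost for free from Proposition~\ref{prop-CW-local}, which already produces a spectrum before localisation.

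For (a)~$\Rightarrow$~(b), a finite $Q$-local CW structure presents $X$ as the (stabilising) colimit of $X_i$ built by finitely many cofibrations whose cofibres are finite coproducts of suspensions of $S[Q^{-1}]$, so $X\in\text{thick}(S[Q^{-1}])$. For (b)~$\Leftrightarrow$~(c)~$\Leftrightarrow$~(d), I would invoke \cite[Theorem 2.1.3(d)]{hopast:ash} once I verify that $\CB[Q^{-1}]$ is monogenic with unit $S[Q^{-1}]$: by Corollary~\ref{cor-bousfield-LQ}(d) we have $[S[Q^{-1}],Y]=[S,Y]=\pi_0(Y)$ for $Y\in\CB[Q^{-1}]$, and the coproduct in $\CB[Q^{-1}]$ of a family of $Q$-local spectra coincides with the underlying coproduct in $\CB$ (since $\pi_*$ of a coproduct is already $Q$-local when each summand is), so $\pi_0$ preserves coproducts in $\CB[Q^{-1}]$, proving $S[Q^{-1}]$ is compact there.

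For (b)~$\Rightarrow$~(e), let $\CC$ be the full subcategory of $Y\in\CB[Q^{-1}]$ with $\bigoplus_iH_i(Y)$ finitely generated over $\Z[Q^{-1}]$. The same long-exact-sequence argument used in Proposition~\ref{prop-finite-global} shows that $\CC$ is thick, the only new ingredient being that $\Z[Q^{-1}]$ is a PID so subgroups and quotients of finitely generated $\Z[Q^{-1}]$-modules are finitely generated. Since $S[Q^{-1}]\in\CC$, every object of $\text{thick}(S[Q^{-1}])$ lies in $\CC$. For (e)~$\Rightarrow$~(a)~$\wedge$~(f) I apply Proposition~\ref{prop-CW-local}: clause (d) gives a spectrum $X'$ with $\coprod_iT_i$ finite and a map $X'\to X$ inducing an equivalence $X'[Q^{-1}]\simeq X$. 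The finiteness of $\coprod_iT_i$ means the CW structure on $X'$ is finite, so $X'$ is a finite spectrum by Proposition~\ref{prop-finite-global}, proving (f); and applying $L_Q=-\ot S[Q^{-1}]$ to this CW structure turns it into a finite $Q$-local CW structure on $X$ (with cells $(T_i)_+\ot\Sg^iS[Q^{-1}]$), proving (a).

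For (f)~$\Rightarrow$~(b), let $\CC'$ be the subcategory of $Y\in\CB$ with $L_QY\in\text{thick}_{\CB[Q^{-1}]}(S[Q^{-1}])$; since $L_Q$ is exact (being $-\ot S[Q^{-1}]$) and preserves retracts, $\CC'$ is thick in $\CB$, and it contains $S$ because $L_QS=S[Q^{-1}]$. By Proposition~\ref{prop-finite-global} it then contains every finite spectrum, so $L_QX'\simeq X\in\text{thick}(S[Q^{-1}])$. I do not anticipate a hard step: the main verification—that $\CB[Q^{-1}]$ is monogenic—has essentially been set up by Corollary~\ref{cor-bousfield-LQ}, and everything else is a bookkeeping translation of the global proof through the $Q$-localisation functor and Proposition~\ref{prop-CW-local}.
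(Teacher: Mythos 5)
Your proposal is correct and follows essentially the same route as the paper: verify $S[Q^{-1}]$ is compact to get monogenicity and hence (b)$\Leftrightarrow$(c)$\Leftrightarrow$(d), run the thick-subcategory homology argument for (b)$\Rightarrow$(e), and use Proposition~\ref{prop-CW-local}(d) to produce a finite $X'$ for (e)$\Rightarrow$(f) and then localise its CW structure for (a). The only cosmetic difference is that you close the implication cycle with an extra direct argument for (f)$\Rightarrow$(b), whereas the paper simply chains (f)$\Rightarrow$(a)$\Rightarrow$(b); both are fine.
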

\begin{proof}
 It is clear that condition~(a) implies~(b).  Given any family of
 objects $X_i\in\CB[Q^{-1}]$ it is easy to see that
 $\bigoplus_iX_i\in\CB[Q^{-1}]$ and this is the coproduct in
 $\CB[Q^{-1}]$.  Using Corollary~\ref{cor-bousfield-LQ}(d) we see that
 \[ [S[Q^{-1}],\bigoplus_iX_i]=[S,\bigoplus_iX_i] =
     \bigoplus_i[S,X_i] = \bigoplus_i[S[Q^{-1}],X_i],
 \]
 so $S[Q^{-1}]$ is compact in $\CB[Q^{-1}]$.  This means that
 $\CB[Q^{-1}]$ is monogenic as defined
 in~\cite{hopast:ash}.  We therefore see using~\cite[Theorem
 2.1.3(d)]{hopast:ash} that conditions~(b), (c) and~(d) are all
 equivalent.  (Moreover, this does not require the hypothesis that $X$
 is bounded below.)  Just as in Proposition~\ref{prop-finite-global}
 we see that the category of those $X$ satisfying~(e) is thick and
 contains $S[Q^{-1}]$, so~(b) implies~(e).
 Proposition~\ref{prop-CW-local}(d) shows that~(e) implies~(f).
 If~(f) holds, we can apply $L_Q$ to a CW structure on $X'$ to
 deduce~(a). 
\end{proof}

\begin{remark}\label{rem-modules}
 There are various different frameworks in which one can construct a
 more geometric (rather than homotopical) model for $S[Q^{-1}]$ as a
 strictly commutative ring spectrum, and then consider the homotopy
 category of strict $S[Q^{-1}]$-modules, which we will call
 $\CM[Q^{-1}]$.  For example, one can
 use~\cite[Chapter III and Theorem VIII.2.1]{ekmm:rma} or 
 \cite[Section 4.5 and Theorem 7.5.0.6]{lu:ha}.  We will see
 that this does not give anything new in the present case, but we will
 spell out some details for comparison with the $p$-complete case to
 be considered later.

 The properties that we expect from such a framework are encapsulated
 by the definition below. 
\end{remark}

\begin{definition}
 A \emph{strict module category} for $S[Q^{-1}]$ consists of a
 monogenic stable homotopy category $\CM[Q^{-1}]$ together with an
 exact, strongly symmetric monoidal functor $P\:\CB\to\CM[Q^{-1}]$ and
 a right adjoint $U\:\CM[Q^{-1}]\to\CB$ such that $UPX$ is naturally
 isomorphic to $S[Q^{-1}]\ot X$.
\end{definition}

For the rest of this section we assume that we have a strict module
category as above.

\begin{lemma}\label{lem-conservative-local}
 If $M\in\CM[Q^{-1}]$ with $UM=0$, then $M=0$.  Thus, if $f\:M\to N$
 in $\CM[Q^{-1}]$ and $U(f)$ is an isomorphism then $f$ is an
 isomorphism.
\end{lemma}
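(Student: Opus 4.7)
The plan is to use the monogenic hypothesis on $\CM[Q^{-1}]$: the unit $P(S)=S[Q^{-1}]$ generates $\CM[Q^{-1}]$ as a localising subcategory, and therefore is a generator in the sense that an object $M\in\CM[Q^{-1}]$ vanishes if and only if $[P(S),\Sg^i M]_{\CM[Q^{-1}]}=0$ for every $i\in\Z$.

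Given $M$ with $UM=0$, I would evaluate these Hom groups via the adjunction $(P,U)$. Since $U$ is a right adjoint between stable homotopy categories it commutes with suspensions (it preserves loops, and $\Omega=\Sg^{-1}$ in the stable setting), so
\[ [P(S),\Sg^i M]_{\CM[Q^{-1}]} = [S,U(\Sg^i M)]_{\CB} = [S,\Sg^i UM]_{\CB} = \pi_{-i}(UM). \]
The assumption $UM=0$ makes each of these groups trivial, so $M=0$ as required. For the second statement, I would complete $f\:M\to N$ to an exact triangle $M\to N\to C\to\Sg M$. Since $U$ is a right adjoint between stable homotopy categories it preserves fibre sequences, hence exact triangles, so $UM\to UN\to UC\to\Sg UM$ is exact in $\CB$. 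The hypothesis that $U(f)$ is an isomorphism then forces $UC=0$, and the first part of the lemma gives $C=0$, so $f$ is an isomorphism.

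The only mildly technical point is the suspension-compatibility and exactness of $U$; both are standard consequences of $U$ being a right adjoint in the stable setting of~\cite{hopast:ash}, and do not require any special features of the Moore-spectrum constructions used to build $\CM[Q^{-1}]$. Everything else is a purely formal manipulation, using nothing beyond the adjunction and the defining property of a generator in a monogenic stable homotopy category; I do not expect any real obstacle.
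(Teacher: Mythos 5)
Your proof is correct and follows essentially the same route as the paper: use the $(P,U)$ adjunction together with the fact that $P(S)$ is the monoidal unit and $\CM[Q^{-1}]$ is monogenic to deduce $M=0$ from $\CM[Q^{-1}](\Sg^iP(S),M)=\CB(S^i,UM)=0$, then apply this to the cofibre for the second claim. The only cosmetic difference is that you move the suspension onto $M$ rather than onto $P(S)$, which is equivalent.
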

\begin{proof}
 we have $\CM[Q^{-1}](\Sg^iPS,M)=\CB(S^i,UM)=0$ for all $i$.  Also,
 $P$ is a strongly symmetric monoidal functor, so $PS$ is the unit
 object of $\CM[Q^{-1}]$, and $\CM[Q^{-1}]$ is assumed to be
 monogenic, so from $\CM[Q^{-1}](\Sg^*PS,M)=0$ we can deduce that
 $M=0$ as claimed.  The second claim follows by applying the first to
 $\cof(f)$. 
\end{proof}

\begin{lemma}\label{lem-adjoint-equiv}
 Suppose we have a functor $F\:\CC\to\CD$ with a right adjoint
 $G\:\CD\to\CC$ such that the unit map $\eta\:X\to G(F(X))$ is an
 isomorphism for all $X\in\CC$.  Suppose also that $G$ reflects
 isomorphisms (so if $g\in\CD(Y_0,Y_1)$ and $U(g)$ is an isomorphism
 then $g$ is an isomorphism).  Then the counit maps
 $\ep_Y\:F(G(Y))\to Y$ are also isomorphisms, so $F$ and $G$ give an
 adjoint equivalence between $\CC$ and $\CD$.
\end{lemma}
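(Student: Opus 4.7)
The plan is to use the triangle identities for the adjunction $F \dashv G$ to express the counit's image under $G$ in terms of the unit, and then invoke the hypothesis that $G$ reflects isomorphisms.

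More explicitly, for any $Y \in \CD$, one of the triangle identities reads
\[ G(\ep_Y) \circ \eta_{G(Y)} = 1_{G(Y)}. \]
By hypothesis, $\eta_{G(Y)}\:G(Y)\to G(F(G(Y)))$ is an isomorphism. The displayed equation then forces $G(\ep_Y)$ to be the two-sided inverse of $\eta_{G(Y)}$ (it is a left inverse by the triangle identity, and any left inverse of an isomorphism is the inverse); in particular $G(\ep_Y)$ is itself an isomorphism. Since $G$ reflects isomorphisms, it follows that $\ep_Y$ is an isomorphism in $\CD$.

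Having shown that both $\eta$ and $\ep$ are natural isomorphisms, the pair $(F,G)$ is by definition an adjoint equivalence between $\CC$ and $\CD$. There is essentially no obstacle here: the argument is purely formal once one writes down the triangle identity, and the only nontrivial input is the hypothesis that $G$ is conservative, which is used exactly once at the end. In the intended application (with $F = P$ and $G = U$ of the previous lemma), this conservativity will come from Lemma~\ref{lem-conservative-local}.
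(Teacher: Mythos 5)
Your proof is correct and follows exactly the same route as the paper: apply the triangle identity $G(\ep_Y)\circ\eta_{G(Y)}=1_{G(Y)}$, deduce that $G(\ep_Y)$ is an isomorphism since $\eta_{G(Y)}$ is, and then use that $G$ reflects isomorphisms to conclude $\ep_Y$ is an isomorphism.
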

\begin{proof}
 By the axioms for an adjunction we have a triangular identity
 $G(\ep_Y)\circ\eta_{G(Y)}=1\:G(Y)\to G(Y)$.  As $\eta_{G(Y)}$ is an
 isomorphism we conclude that $G(\ep_Y)$ is an isomorphism.  As $G$
 reflects isomorphisms we conclude that $\ep_Y$ is an isomorphism.
\end{proof}

\begin{proposition}\label{prop-modules-local}
 The functor $U$ takes values in $\CB[Q^{-1}]$ and the restricted
 functors $\CB[Q^{-1}]\xra{P_1}\CM[Q^{-1}]\xra{U_1}\CB[Q^{-1}]$ give
 an adjoint equivalence.
\end{proposition}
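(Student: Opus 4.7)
The plan is to verify (i) that $U$ factors through $\CB[Q^{-1}]$, (ii) that the adjunction $P\dashv U$ restricts to one between $\CB[Q^{-1}]$ and $\CM[Q^{-1}]$, and (iii) that the restricted adjunction is an equivalence via Lemma~\ref{lem-adjoint-equiv}.

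For (i), fix $q\in Q$.  Under the natural iso $UPS\cong S[Q^{-1}]$, the endomorphism $U(q\cdot 1_{PS})$ of $UPS$ is identified with $q\cdot 1_{S[Q^{-1}]}$, which is an equivalence since $S[Q^{-1}]$ is $Q$-local.  Lemma~\ref{lem-conservative-local} then shows that $q\cdot 1_{PS}$ is an equivalence in $\CM[Q^{-1}]$.  Since $PS$ is the unit object, for any $M\in\CM[Q^{-1}]$ the map $q\cdot 1_M = (q\cdot 1_{PS})\ot 1_M$ is an equivalence, and applying $U$ gives that $q\cdot 1_{UM}$ is an equivalence.  Hence every $UM$ is $Q$-local, and we may factor $U = i\circ U_1$ with $i\:\CB[Q^{-1}]\hookrightarrow\CB$ the fully faithful inclusion.

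For (ii), set $P_1 = P\circ i$.  Fully faithfulness of $i$ immediately yields
\[ \CM[Q^{-1}](P_1X, M) = \CM[Q^{-1}](PX, M) \cong \CB(X, UM) = \CB[Q^{-1}](X, U_1M) \]
for $X\in\CB[Q^{-1}]$ and $M\in\CM[Q^{-1}]$, so $P_1\dashv U_1$, and the unit $\eta^1_X\:X\to U_1P_1X = UPX$ coincides with the restriction of $\eta_X$.  For (iii), Lemma~\ref{lem-conservative-local} tells us $U_1$ reflects isomorphisms, so Lemma~\ref{lem-adjoint-equiv} reduces matters to showing $\eta^1_X$ is an equivalence for each $X\in\CB[Q^{-1}]$.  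Composing $\eta^1_X$ with the natural iso $UPX\cong L_QX$ and, since $X$ is $Q$-local, the equivalence $X\simeq L_QX$ from Proposition~\ref{prop-LQ}(c), we obtain a natural endomorphism $\tau$ of $1_{\CB[Q^{-1}]}$.  Since $\CB[Q^{-1}]$ is monogenic with unit $S[Q^{-1}]$, $\tau$ is determined by its value $\tau_{S[Q^{-1}]}\in\End(S[Q^{-1}])=\pi_0(S[Q^{-1}])=\Z[Q^{-1}]$, and is invertible iff this element is a unit.

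To see that it is, invoke the triangle identity $U(\ep_{PS})\circ\eta_{UPS} = 1_{UPS}$, which exhibits $\eta_{S[Q^{-1}]}$ (and hence $\tau_{S[Q^{-1}]}$) as a split monomorphism in $\End_\CB(S[Q^{-1}]) = \Z[Q^{-1}]$; left-invertibility in this commutative ring forces the element to be a unit $\al$.  Thus $\tau = \al\cdot 1$ is a natural equivalence, $\eta^1_X$ is an equivalence for all $X\in\CB[Q^{-1}]$, and Lemma~\ref{lem-adjoint-equiv} delivers the adjoint equivalence.  The main obstacle is this rigidification step: propagating invertibility at the single object $S[Q^{-1}]$ to the entire natural transformation $\tau$, which depends on monogenicity of $\CB[Q^{-1}]$ in order to identify natural endomorphisms of its identity functor with $\End(S[Q^{-1}])$.
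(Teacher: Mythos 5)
Your proof is correct, and while parts (i) and (ii) track the paper's argument, your treatment of (iii) genuinely diverges. The paper simply observes that for $X\in\CB[Q^{-1}]$ one has $UPX\simeq S[Q^{-1}]\ot X\simeq X$ and \emph{reads this off as} the unit $\eta_X$ being an isomorphism; this tacitly assumes that the natural isomorphism $UPX\cong S[Q^{-1}]\ot X$ postulated in the definition of a strict module category is compatible with the unit of the adjunction, so that $\eta_X$ is carried to the standard localisation map $X\to X[Q^{-1}]$. You do not assume that, and instead extract a natural endomorphism $\tau$ of $1_{\CB[Q^{-1}]}$ and pin it down using monogenicity and a triangle identity. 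This is a legitimate, and arguably more honest, way to close that gap given only what the definition literally asserts. One small imprecision: the claim that $\tau$ ``is determined by its value $\tau_{S[Q^{-1}]}$'' is stronger than what you use or can easily justify. What monogenicity actually gives you is that for any $X\in\CB[Q^{-1}]$ and any $f\in\pi_i(X)=[\Sg^i S[Q^{-1}],X]$, naturality of $\tau$ (together with $\tau_{\Sg^i S[Q^{-1}]}=\Sg^i\tau_{S[Q^{-1}]}$) forces $\pi_i(\tau_X)(f)=\tau_{S[Q^{-1}]}\cdot f$; so if $\tau_{S[Q^{-1}]}$ is a unit of $\Z[Q^{-1}]$ then $\pi_*(\tau_X)$ is an isomorphism for every $X$, hence $\tau_X$ is an isomorphism. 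That is all you need, and the triangle identity argument for $\tau_{S[Q^{-1}]}$ being a unit (split monomorphism in the commutative ring $\Z[Q^{-1}]$, after transporting $\eta_{UPS}$ across the various identifications $UPS\cong S[Q^{-1}]\cong UPUPS$) is sound, though you should flag which identifications you are using since the element you ultimately test for invertibility only agrees with $\eta_{UPS}$ up to composition with isomorphisms --- which of course does not affect whether it is a unit.
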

This could be seen as an instance
of~\cite[Theorem VIII.3.3]{ekmm:rma}, but we will prove it directly.
\begin{proof}
 We have
 $\CM[Q^{-1}](P(S),P(S))=\CB(S,U(P(S)))=\CB(S,S[Q^{-1}])=\Z[Q^{-1}]$.
 Thus, for every $q\in Q$, multiplication by $q$ is an isomorphism on
 $P(S)$.  For any $M\in\CM[Q^{-1}]$ we have
 $\pi_i(U(M))=\CB(S^i,U(M))=\CM[Q^{-1}](\Sg^iP(S),M)$, and it follows
 that multiplication by $q$ is also an isomorphism on this group.  Thus,
 the functor $U$ takes values in $\CB[Q^{-1}]$.  For
 $X\in\CB[Q^{-1}]$ we have
 $U(P(X))=S[Q^{-1}]\ot X=X[Q^{-1}]\simeq X$, so we see that the unit
 $\eta_X\:X\to U(P(X))$ is an isomorphism.  As $U$ reflects
 isomorphisms (by Lemma~\ref{lem-conservative-local}), we can apply
 Lemma~\ref{lem-adjoint-equiv} to complete the proof.
\end{proof}

\section{$p$-complete spectra}
\label{sec-complete}

In this section we fix a prime number $p$.  We will need various ideas
about $p$-completion and derived $p$-completion of abelian groups,
for which we will follow the treatment in~\cite[Section 12]{st:ata}.
The basic definitions are as follows.
\begin{definition}\leavevmode
 \begin{itemize}
  \item[(a)] We say that an abelian group $A$ is
   \emph{$\Ext$-$p$-complete} if $\Hom(\Z[1/p],A)=\Ext(\Z[1/p],A)=0$. 
  \item[(b)] We define $L_0(A)=\Ext(\Z/p^\infty,A)$ and
   $L_1(A)=\Hom(\Z/p^\infty,A)$. 
 \end{itemize}
\end{definition}

\begin{lemma}
 For any abelian group $A$, there is a natural exact sequence 
 \[ L_1(A) \mra \Hom(\Z[1/p],A) \to A \to L_0(A) \era
     \Ext(\Z[1/p],A). 
 \]
 Thus, there is a natural map $\xi_A\:A\to L_0(A)$, and $A$ is
 $\Ext$-$p$-complete if and only $L_1(A)=0$ and $\xi_A$ is an
 isomorphism. 
\end{lemma}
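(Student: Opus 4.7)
The plan is to obtain the exact sequence by applying $\Hom(-,A)$ to the short exact sequence of abelian groups
\[ 0 \to \Z \to \Z[1/p] \to \Z/p^\infty \to 0 \]
(this is just the group-theoretic shadow of the cofibration $S^{-1}/p^\infty \to S \to S[1/p] \to S/p^\infty$ built in Construction~\ref{cons-S-pinv}, or it can be extracted directly from the defining presentation of $\Z/p^\infty$). The associated long exact $\Ext$-sequence reads
\[ 0 \to \Hom(\Z/p^\infty,A) \to \Hom(\Z[1/p],A) \to \Hom(\Z,A) \to \Ext(\Z/p^\infty,A) \to \Ext(\Z[1/p],A) \to \Ext(\Z,A). \]

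Now I would simplify the outer terms: $\Hom(\Z,A)=A$, while $\Ext(\Z,A)=0$ because $\Z$ is projective. Substituting the definitions $L_1(A)=\Hom(\Z/p^\infty,A)$ and $L_0(A)=\Ext(\Z/p^\infty,A)$ gives precisely the claimed five-term exact sequence, with $\xi_A\:A\to L_0(A)$ defined as the connecting homomorphism. Naturality in $A$ is automatic, because the long exact sequence of $\Ext$ is natural in its second argument.

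For the characterisation of $\Ext$-$p$-completeness, one just reads off the sequence in both directions. If $\Hom(\Z[1/p],A)=\Ext(\Z[1/p],A)=0$, then exactness at the left forces $L_1(A)=0$ (as it injects into $\Hom(\Z[1/p],A)$), and $\xi_A$ is both injective (its kernel is the image of $\Hom(\Z[1/p],A)=0$) and surjective (its cokernel injects into $\Ext(\Z[1/p],A)=0$, using that $L_0(A)\era\Ext(\Z[1/p],A)$). Conversely, if $L_1(A)=0$ and $\xi_A$ is an isomorphism, the same exactness arguments run backwards to give $\Hom(\Z[1/p],A)=0$ and $\Ext(\Z[1/p],A)=0$.

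There is no real obstacle here; the only mild point is to keep track of which arrow is named $\xi_A$, and to quote that $\Z$ is projective so that the sequence terminates at $\Ext(\Z[1/p],A)$ rather than continuing further.
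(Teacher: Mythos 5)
Your proof is correct and follows precisely the same route as the paper: applying $\Hom(-,A)$ to the short exact sequence $\Z \mra \Z[1/p] \era \Z/p^\infty$, invoking $\Hom(\Z,A)=A$ and $\Ext(\Z,A)=0$, and then reading off the characterisation of $\Ext$-$p$-completeness from exactness. You have merely spelled out a few steps the paper leaves implicit.
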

\begin{proof}
 The sequence is the standard six term exact sequence of $\Hom$ and
 $\Ext$ groups derived from the short exact sequence
 $\Z\to\Z[1/p]\to\Z/p^\infty$, bearing in mind that $\Hom(\Z,A)=A$ and
 $\Ext(\Z,A)=0$.  The second claim is immediate from the exactness of
 the sequence. 
\end{proof}

\begin{remark}
 The groups $L_0(A)=\Ext(\Z/p^\infty,A)$ and
 $L_1(A)=\Hom(\Z/p^\infty,A)$ are naturally modules over the ring
 $\End(\Z/p^\infty)$, which can be identified with the ring $\Z_p$ of
 $p$-adic integers.  If $A$ is $\Ext$-$p$-complete then the map
 $\eta\:A\to L_0A$ is an isomorphism, so $A$ inherits a module
 structure over $\Z_p$.  Naturality of $\eta$ ensures that every
 abelian group homomorphism between $\Ext$-$p$-complete groups is
 actually a morphism of $\Z_p$-modules.  Similarly, for any short
 exact sequence $A\to B\to C$ and any abelian group $T$ we have a
 connecting map $\dl\:\Hom(T,C)\to\Ext(T,A)$ which is natural in $T$
 and thus compatible with endomorphisms of $T$.  By taking
 $T=\Z/p^\infty$ we see that the connecting map $L_1C\to L_0A$ is also
 $\Z_p$-linear.
\end{remark}

\begin{remark}\label{rem-coprod}
 The category $\CA_p$ of $\Ext$-$p$-complete groups is not closed
 under infinite direct sums.  Nonetheless, any family of groups
 $A_i\in\CA_p$ has a coproduct in $\CA_p$, namely the group
 $L_0\left(\bigoplus_iA_i\right)$.  As an example, put
 $P=\bigoplus_{n\in\N}\Z_p$, the group of sequences $(a_n)$ in
 $\prod_{n\in\N}\Z_p$ such that $a_n=0$ for $n\gg 0$.  Let $Q$ be the
 larger group of sequences $a$ that converge to zero $p$-adically, so
 for all $k$ there exists $n$ with $a_i=0\pmod{p^k}$ when $i\geq n$.
 It is not hard to identify $L_0(P)$ with $Q$.
\end{remark}

\begin{lemma}\label{lem-detect-epi}
 If $\al\:A\to B$ is a morphism of $\Ext$-$p$-complete groups and the
 induced map $A/p\to B/p$ is surjective, then $\al$ is surjective.
\end{lemma}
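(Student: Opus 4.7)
The plan is to set $C = \cok(\al)$ and reduce the problem to showing that $C$ is simultaneously $p$-divisible and $\Ext$-$p$-complete; a standard argument will then force $C = 0$.

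The first step is immediate: the hypothesis that $A/p \to B/p$ is surjective rewrites as $B = \img(\al) + pB$, equivalently as $pC = C$, so $C$ is $p$-divisible.

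The second step is to verify that $C \in \CA_p$. Here some care is needed, because neither the image $I = \img(\al)$ nor $\ker(\al)$ is a priori $\Ext$-$p$-complete, so one cannot just invoke closure of $\CA_p$ under quotients. I would factor $\al$ as $A \era I \mra B$ and apply the six-term exact sequence of $\Hom(\Z[1/p], -)$ and $\Ext(\Z[1/p], -)$ (which terminates because $\Z[1/p]$ has projective dimension one over $\Z$) to the two resulting short exact sequences $\ker(\al) \mra A \era I$ and $I \mra B \era C$. The first, combined with $\Hom(\Z[1/p], A) = \Ext(\Z[1/p], A) = 0$, produces $\Ext(\Z[1/p], I) = 0$. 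Feeding that into the second, together with the $\Ext$-$p$-completeness of $B$, yields $\Hom(\Z[1/p], C) = \Ext(\Z[1/p], C) = 0$, so $C$ is $\Ext$-$p$-complete.

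For the third step, I would show that a $p$-divisible $\Ext$-$p$-complete group must vanish. Given $c \in C$, $p$-divisibility allows an inductive choice $c_0 = c$ and $c_{n+1}$ with $p c_{n+1} = c_n$; the assignment $1/p^n \mapsto c_n$ extends $\Z$-linearly to a homomorphism $f\colon \Z[1/p] \to C$ with $f(1) = c$. Thus evaluation at $1$ gives a surjection $\Hom(\Z[1/p], C) \era C$, and since the source vanishes, so does $C$. The only mildly delicate point is the second step, which must route through two short exact sequences rather than rely on any closure of $\CA_p$ under images.
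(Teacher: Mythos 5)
Your proof is correct and follows the same overall strategy as the paper: set $C=\cok(\al)$, observe $C/p=0$, argue that $C$ is nonetheless $\Ext$-$p$-complete, and conclude $C=0$. The only difference is that the paper discharges the two intermediate claims by citation, while you prove them from scratch. Concerning your worry in the second step: the reference the paper invokes ([Proposition 12.22(c)] of \cite{st:ata}) asserts closure of $\CA_p$ under cokernels of maps between $\Ext$-$p$-complete groups — a genuinely weaker statement than closure under arbitrary quotients, which indeed fails — and your two-short-exact-sequence chase is exactly the standard proof of that closure property, so your caution was well placed but the cited fact is real and suffices. For the third step, the paper goes through $L_0$: $C/p=0$ forces $L_0(C)=\Ext(\Z/p^\infty,C)=0$, and $\Ext$-$p$-completeness gives $C\simeq L_0(C)$; your direct lifting argument, exhibiting any $c\in C$ as $f(1)$ for some $f\in\Hom(\Z[1/p],C)$, is a clean and self-contained alternative that avoids mentioning $L_0$ at all.
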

\begin{proof}
 Let $C$ be the cokernel of $\al$.  The functor $A\mapsto A/p$
 is right exact, so $C/p=0$, so $L_0(C)=0$
 by~\cite[Proposition 12.29]{st:ata}.  However, we also know 
 from~\cite[Proposition 12.22(c)]{st:ata} that $C$ is
 $\Ext$-$p$-complete, so $C=L_0(C)=0$, so $\al$ is surjective.
\end{proof}

\begin{lemma}\label{lem-uniform-torsion}
 If $p^eA=0$ for some $e\in\N$, then $A$ is $\Ext$-$p$-complete, so
 $L_1(A)=0$ and $L_0(A)=A$.
\end{lemma}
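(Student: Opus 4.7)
The plan is to exploit the fact that multiplication by $p^e$ acts as zero on $A$ but in various nice ways on the test groups $\Z[1/p]$ and $\Z/p^\infty$, and then to read off the conclusions from the defining exact sequence.

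First I would observe that, since $p^e.1_A = 0$, for any abelian group $B$ the induced endomorphism $(p^e)_*$ of $\Hom(B,A)$ and $\Ext(B,A)$ is zero (functoriality in the second variable). On the other hand, taking $B = \Z[1/p]$, multiplication by $p^e$ on $\Z[1/p]$ is an automorphism, hence the induced endomorphisms $(p^e)^*$ on $\Hom(\Z[1/p],A)$ and $\Ext(\Z[1/p],A)$ are isomorphisms. Since $(p^e)_* = (p^e)^*$ on these groups, each of $\Hom(\Z[1/p],A)$ and $\Ext(\Z[1/p],A)$ admits an endomorphism that is simultaneously zero and an isomorphism, forcing both groups to vanish. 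By definition, this says that $A$ is $\Ext$-$p$-complete.

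Next I would handle $L_1(A) = \Hom(\Z/p^\infty, A)$. Here multiplication by $p^e$ on $\Z/p^\infty$ is surjective, so $(p^e)^*$ is injective on $\Hom(\Z/p^\infty, A)$; but again $(p^e)_* = 0$ on this group, and injective equals zero forces the group to be trivial. Finally, for $L_0(A) = A$, I plug the vanishing statements just established into the natural five-term exact sequence
\[ L_1(A) \to \Hom(\Z[1/p], A) \to A \xra{\xi_A} L_0(A) \to \Ext(\Z[1/p], A) \]
recorded before the lemma. The two outer $\Hom$/$\Ext$ terms vanish by the first paragraph and $L_1(A)$ vanishes by the second, so exactness makes $\xi_A$ an isomorphism.

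This proof is very short and largely formal; the only thing that requires any thought is the ``zero equals isomorphism/injection'' trick applied to $p^e$, which is really the whole content. There is no serious obstacle — the main thing is to be careful to deduce $L_1(A) = 0$ directly (not from the five-term sequence, which begins with $L_1(A)$ and so does not bound it) before invoking the sequence to identify $L_0(A)$ with $A$.
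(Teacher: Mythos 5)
Your core argument --- that $p^e$ acts as zero via the second variable but as an isomorphism via the first on $\Hom(\Z[1/p],A)$ and $\Ext(\Z[1/p],A)$, forcing both to vanish --- is exactly the paper's proof. Two small remarks. First, your separate argument for $L_1(A)=0$ via $\Z/p^\infty$ is correct but unnecessary: the lemma immediately preceding this one already states that ``$A$ is $\Ext$-$p$-complete if and only if $L_1(A)=0$ and $\xi_A$ is an isomorphism,'' so once $\Ext$-$p$-completeness is established the remaining conclusions are definitional. Second, your closing caveat that the five-term sequence ``begins with $L_1(A)$ and so does not bound it'' is a misreading: that sequence opens with a \emph{monomorphism} $L_1(A)\rightarrowtail\Hom(\Z[1/p],A)$, so the vanishing of $\Hom(\Z[1/p],A)$ does in fact force $L_1(A)=0$ directly.
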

\begin{proof}
 Multiplication by $p^e$ is an isomorphism on $\Z[1/p]$, so it is also
 an isomorphism on $\Hom(\Z[1/p],A)$ and $\Ext(\Z[1/p],A)$.  On the
 other hand, multiplication by $p^e$ is  zero on $A$, so it is also
 zero on $\Hom(\Z[1/p],A)$ and $\Ext(\Z[1/p],A)$.  These facts can
 only be compatible if $\Hom(\Z[1/p],A)=\Ext(\Z[1/p],A)=0$.
\end{proof}

\begin{definition}
 We write $A[p^e]$ for $\ann(p^e,A)=\{a\in A\st p^ea=0\}$, and
 $A[p^\infty]$ for $\bigcup_eA[p^e]$. 
\end{definition}

\begin{lemma}\label{lem-A-pe}\leavevmode
 \begin{itemize}
  \item[(a)] For any $e\in\N$, the map $\eta\:A\to L_0A$ induces an
   isomorphism $A/p^e\to L_0(A)/p^e$
  \item[(b)] If $L_1A=0$ then for all $e\in\N\cup\{\infty\}$, the map
   $\eta\:A\to L_0(A)$ induces an isomorphism $A[p^e]\to(L_0A)[p^e]$.
 \end{itemize}
\end{lemma}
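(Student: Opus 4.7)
The plan is to combine the exact sequence of the preceding lemma,
\[ 0 \to L_1(A) \to \Hom(\Z[1/p],A) \to A \xrightarrow{\eta} L_0(A) \to \Ext(\Z[1/p],A) \to 0, \]
with the six-term snake sequence
\[ 0 \to B'[p^e] \to B[p^e] \to B''[p^e] \to B'/p^e \to B/p^e \to B''/p^e \to 0 \]
associated to any short exact sequence $B'\mra B\era B''$ via multiplication by $p^e$. First I would break the five-term sequence into three short exact sequences using $K:=\ker(\eta)$ and $\tilde L_0:=\img(\eta)$, namely $L_1(A)\mra \Hom(\Z[1/p],A)\era K$, then $K\mra A\era \tilde L_0$, and finally $\tilde L_0\mra L_0(A)\era \Ext(\Z[1/p],A)$.

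The key input is that multiplication by $p$ is an automorphism of $\Z[1/p]$, so $\Hom(\Z[1/p],A)$ and $\Ext(\Z[1/p],A)$ are uniquely $p^e$-divisible; in particular their $p^e$-torsion and mod-$p^e$ reductions both vanish. Feeding this into the snake sequences for the outer two short exact sequences yields $K/p^e=0$ and $K[p^e]\cong L_1(A)/p^e$ from the first piece, and $\tilde L_0[p^e]\cong L_0(A)[p^e]$, $\tilde L_0/p^e\cong L_0(A)/p^e$ from the third piece.

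Running the snake sequence on the middle short exact sequence $K\mra A\era\tilde L_0$ and substituting these identifications then proves both claims. For part~(a), the vanishing $K/p^e=0$ forces an isomorphism $A/p^e\xrightarrow{\sim}\tilde L_0/p^e\cong L_0(A)/p^e$, which by naturality is induced by $\eta$. For part~(b), the hypothesis $L_1(A)=0$ additionally forces $K[p^e]=0$, giving $A[p^e]\xrightarrow{\sim}\tilde L_0[p^e]\cong L_0(A)[p^e]$, again via $\eta$; the case $e=\infty$ follows by passing to the filtered union over finite~$e$. The argument is entirely formal, so there is no real obstacle; the only care required is keeping track of which snake connecting maps vanish and confirming that the resulting composite isomorphisms are those induced by $\eta$.
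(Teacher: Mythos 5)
Your proof is correct, but it takes a genuinely different route from the paper. You factor the five-term exact sequence
\[ L_1(A)\mra\Hom(\Z[1/p],A)\to A\xra{\eta}L_0(A)\era\Ext(\Z[1/p],A) \]
into three short exact sequences and then run the $p^e$-torsion/mod-$p^e$ snake sequence for each, with the only real algebraic input being that $\Hom(\Z[1/p],A)$ and $\Ext(\Z[1/p],A)$ are uniquely $p$-divisible (hence have vanishing $p^e$-torsion and mod-$p^e$ reduction). The paper instead factors $p^e\cdot 1_A$ through $p^eA$ to get the two short exact sequences $A[p^e]\mra A\era p^eA$ and $p^eA\mra A\era A/p^e$, applies the derived functors $L_0,L_1$ to each, and uses Lemma~\ref{lem-uniform-torsion} (that $L_0$ is the identity and $L_1$ vanishes on groups killed by $p^e$) to collapse the resulting six-term sequences. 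Your version keeps the map $\eta$ front and centre and trades the derived-functor long exact sequence for the elementary snake sequence, which makes the isomorphisms being "induced by $\eta$" slightly more transparent; the paper's version stays entirely inside the $L_0/L_1$ formalism it has already set up and avoids introducing the auxiliary objects $K$ and $\tilde L_0$. Both are short, formal, and at about the same level of effort.
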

\begin{proof}
 The case $e=\infty$ in~(b) follows from the other cases, so we can
 assume that $e\in\N$.  We have maps
 \begin{center}
  \begin{tikzcd}
   A[p^e] \arrow[r,"i",rightarrowtail] & 
   A \arrow[r,"f",twoheadrightarrow] & 
   p^eA \arrow[r,"j",rightarrowtail] &
   A \arrow[r,"q",twoheadrightarrow] & 
   A/p^e,
  \end{tikzcd}
 \end{center}
 where $f(a)=p^ea$, and $i$ and $j$ are inclusions, and $q$ is the
 projection, so $jf=p^e.1_A$.  The above diagram breaks into two short
 exact sequences, each of which gives a six-term exact sequence of
 $L_0$ and $L_1$ groups.  Because $p^e$ annihilates $A[p^e]$ and
 $A/p^e$, Lemma~\ref{lem-uniform-torsion} tells us that
 $L_0(A[p^e])=A[p^e]$ and $L_0(A/p^e)=A/p^e$ and
 $L_1(A[p^e])=L_1(A/p^e)=0$.  Thus, the two six-term sequences are
 like
 \begin{center}
  \begin{tikzcd}
   0 \arrow[r,rightarrowtail] & 
   L_1(A) \arrow[r,"L_1(f)",rightarrowtail] & 
   L_1(p^eA) \arrow[r,"\dl"] &
   A[p^e] \arrow[r,"L_0(i)"] & 
   L_0(A) \arrow[r,"L_0(f)",twoheadrightarrow] & 
   L_0(p^eA) 
  \end{tikzcd}
  \begin{tikzcd}
   L_1(p^eA) \arrow[r,"L_1(j)","\simeq"'] & 
   L_1(A) \arrow[r,"L_1(q)"] & 
   0 \arrow[r,"\dl"] &
   L_0(p^eA) \arrow[r,"L_0(j)",rightarrowtail] & 
   L_0(A) \arrow[r,"L_0(f)",twoheadrightarrow] & 
   A/p^e 
  \end{tikzcd}
 \end{center}
 As $L_0(f)$ is surjective we have 
 \[ L_0(A)/p^e=\cok(p^e.1_{L_0(A)})=
    \cok(L_0(j)L_0(f))=\cok(L_0(j))=A/p^e, 
 \]
 which proves~(a).  If $L_1(A)=0$ then the second sequence gives
 $L_1(p^eA)=0$, so the first sequence gives $A[p^e]=\ker(L_0(f))$.
 The second sequence also shows that $L_0(j)$ is injective so 
 \[ \ker(L_0(f))=\ker(L_0(jf))=\ker(p^e.1_{L_0(A)}) = 
      L_0(A)[p^e],
 \]
 which proves~(b).
\end{proof}

\begin{definition}\label{defn-p-complete}\leavevmode
 \begin{itemize}
  \item[(a)] We say that a spectrum $X$ is \emph{$p$-complete} if each
   homotopy group $\pi_i(X)$ is $\Ext$-$p$-complete.
  \item[(b)] We say that $X$ is \emph{$\{p\}$-local} if each homotopy group
   $\pi_i(X)$ is $\{p\}$-local, or equivalently, $p.1_X$ is an
   equivalence, or equivalently, the spectrum
   $X/p=S/p\ot X=\cof(p.1_X)$ is zero.
  \item[(c)] We say that $X$ is \emph{uniformly $p$-torsion} if
   $p^n.1_X=0$ for some $n$.
  \item[(d)] We write $\hCB_p$ for the full subcategory of $p$-complete
   spectra.
 \end{itemize}
\end{definition}

\begin{lemma}
 If $X$ is uniformly $p$-torsion, then it is $p$-complete.
\end{lemma}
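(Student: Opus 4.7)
The plan is to reduce the statement directly to Lemma~\ref{lem-uniform-torsion}. The functor $\pi_i \: \CB \to \Ab$ is additive, so from the equation $p^n \cdot 1_X = 0$ in $[X,X]$ we immediately obtain $p^n \cdot 1_{\pi_i(X)} = 0$ in $\End(\pi_i(X))$ for every $i \in \Z$. In other words, setting $A = \pi_i(X)$, we have $p^n A = 0$, so the hypothesis of Lemma~\ref{lem-uniform-torsion} is satisfied (with $e = n$).

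Applying that lemma then yields that each $\pi_i(X)$ is $\Ext$-$p$-complete. By Definition~\ref{defn-p-complete}(a), this is exactly what it means for $X$ to be $p$-complete, so we are done. There is no genuine obstacle: the algebraic content lives entirely in Lemma~\ref{lem-uniform-torsion}, and the only thing to notice on the topological side is the purely formal observation that additivity of $\pi_i$ transports the vanishing of $p^n \cdot 1_X$ to the vanishing of $p^n$ on each homotopy group.
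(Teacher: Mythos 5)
Your proof is correct and matches the paper's argument, which simply cites Lemma~\ref{lem-uniform-torsion}; you have just made explicit the (entirely routine) step that additivity of $\pi_i$ transports $p^n\cdot 1_X=0$ to $p^n\pi_i(X)=0$ for each $i$.
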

\begin{proof}
 This follows from Lemma~\ref{lem-uniform-torsion}.
\end{proof}

\begin{definition}
 We write $\hL_pX$ or $X_p$ for the spectrum $\uHom(S^{-1}/p^\infty,X)$,
 and $\hC_pX$ for $\uHom(S[1/p],X)$.  The cofibration sequence
 $S^{-1}/p^\infty\to S\to S[1/p]$ gives a natural fibration sequence 
 $\hC_pX\xra{\xi}X\xra{\tht}\hL_pX$.
\end{definition}

\begin{lemma}\label{lem-pi-hL}
 For any spectrum $X$, there are natural short exact sequences 
 \begin{align*}
  \Ext(\Z[1/p],\pi_{i+1}(X)) &\mra
   \pi_i(\hC_pX) \era \Hom(\Z[1/p],\pi_i(X)) \\
  \Ext(\Z/p^\infty,\pi_i(X)) &\mra 
   \pi_i(\hL_pX) \era \Hom(\Z/p^\infty,\pi_{i-1}(X)) \\
  L_0(\pi_i(X)) &\mra 
   \pi_i(\hL_pX) \era L_1(\pi_{i-1}(X))
 \end{align*}
\end{lemma}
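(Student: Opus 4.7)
The plan is to derive all three short exact sequences from Lemma~\ref{lem-moore-maps}, applied to the Moore spectra $S[1/p]$ and $S/p^\infty$ of Construction~\ref{cons-S-pinv}. First I would verify that these Moore spectra are presented in the form required by Remark~\ref{rem-moore-cons}: indeed $S[1/p]=\cof(\mu_{1-pt})$ and $S/p^\infty=\cof(\dl_t-p)$, where source and target of each defining map are $S[t]=\bigoplus_{n\geq 0}S$, and on $\pi_0$ these realise the free resolutions $\Z[t]\xra{1-pt}\Z[t]\era\Z[1/p]$ and $\Z[t]\xra{\dl_t-p}\Z[t]\era\Z/p^\infty$ promised in Construction~\ref{cons-S-pinv}. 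So Lemma~\ref{lem-moore-maps} applies to both.

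For the first sequence, apply Lemma~\ref{lem-moore-maps} with $SA=S[1/p]$: since $\hC_pX=\uHom(S[1/p],X)$ by definition, one reads off
\[ \Ext(\Z[1/p],\pi_{i+1}X) \mra \pi_i\hC_pX \era \Hom(\Z[1/p],\pi_iX). \]
For the second sequence, use the standard identification $\uHom(\Sg^{-1}Y,X)\simeq\Sg\uHom(Y,X)$ to rewrite
\[ \pi_i\hL_pX = \pi_i\uHom(\Sg^{-1}S/p^\infty,X) \cong \pi_{i-1}\uHom(S/p^\infty,X). \]
Now apply Lemma~\ref{lem-moore-maps} with $SA=S/p^\infty$ at index $i-1$ and substitute, obtaining
\[ \Ext(\Z/p^\infty,\pi_iX) \mra \pi_i\hL_pX \era \Hom(\Z/p^\infty,\pi_{i-1}X). \]
The third sequence is merely a rewriting of the second, using the definitions $L_0(A)=\Ext(\Z/p^\infty,A)$ and $L_1(A)=\Hom(\Z/p^\infty,A)$. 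Naturality in $X$ is automatic, since the only ingredients used ($\uHom$, $\pi_*$, the cofibrations defining $S[1/p]$ and $S/p^\infty$, and the identification with $\Ext$ and $\Hom$) are functorial in $X$.

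There is no real obstacle. The two points that require care are the degree shift when passing from $S^{-1}/p^\infty$ to $S/p^\infty$, and the verification that the concrete Moore spectra of Construction~\ref{cons-S-pinv} genuinely fit the framework of Remark~\ref{rem-moore-cons}; both are handled as above.
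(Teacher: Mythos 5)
Your proposal is correct and follows essentially the same route as the paper: all three sequences come from Lemma~\ref{lem-moore-maps} applied to $S[1/p]$ and $S/p^\infty$, with the second derived via the shift $\hL_pX=\uHom(\Sg^{-1}S/p^\infty,X)$ and the third being a notational restatement. You spell out the verification that Construction~\ref{cons-S-pinv} fits the framework of Remark~\ref{rem-moore-cons}, which the paper leaves implicit, but the argument is the same.
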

\begin{proof}
 The first two sequences are special cases of
 Lemma~\ref{lem-moore-maps}, and the third is the same as the second
 but with different notation.
\end{proof}

\begin{proposition}\label{prop-hL}\leavevmode
 \begin{itemize}
  \item[(a)] $\hL_pX$ is always $p$-complete.   
  \item[(b)] $\hC_pX$ is always $\{p\}$-local.
  \item[(c)] A spectrum $X$ is $p$-complete iff $\hC_pX=0$ iff the map
   $\tht\:X\to\hL_pX$ is an equivalence.
  \item[(d)] A spectrum $X$ is $\{p\}$-local iff $\hL_pX=0$ iff the map
   $\xi\:\hC_pX\to X$ is an equivalence.
 \end{itemize}
\end{proposition}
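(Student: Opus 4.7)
The plan is to follow the pattern of the proof of Proposition~\ref{prop-LQ} and establish the four claims in the order (a), (c), (d), (b), using Lemma~\ref{lem-pi-hL} as the main computational input.

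For (a), I would read off from the third sequence of Lemma~\ref{lem-pi-hL} that each $\pi_i(\hL_pX)$ is an extension of $L_1(\pi_{i-1}X)$ by $L_0(\pi_i X)$. Both $L_0A$ and $L_1A$ are always $\Ext$-$p$-complete by \cite[Proposition~12.22]{st:ata}, and the class of $\Ext$-$p$-complete groups is closed under extensions (via the six-term sequence of $\Hom(\Z[1/p],-)$ and $\Ext(\Z[1/p],-)$), so $\pi_i(\hL_pX)$ is $\Ext$-$p$-complete. For (c), if $X$ is $p$-complete then $\Hom(\Z[1/p],\pi_iX)=\Ext(\Z[1/p],\pi_iX)=0$, so the first sequence of Lemma~\ref{lem-pi-hL} forces $\pi_*(\hC_pX)=0$ and hence $\hC_pX=0$; the fibration $\hC_pX\to X\xra{\tht}\hL_pX$ then turns $\hC_pX=0$ into $\tht$ being an equivalence; and if $\tht$ is an equivalence then $X\simeq\hL_pX$ is $p$-complete by (a).

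For (d) I would leverage (a) rather than work directly with Lemma~\ref{lem-pi-hL}. If $X$ is $\{p\}$-local then $p\cdot 1_X$ is an equivalence, so the additive functor $\hL_p$ sends it to the equivalence $p\cdot 1_{\hL_pX}$; combined with (a), this makes $\hL_pX$ simultaneously $\{p\}$-local and $p$-complete. But any abelian group $A$ that is simultaneously a $\Z[1/p]$-module (so $A\cong\Hom(\Z[1/p],A)$) and $\Ext$-$p$-complete (so $\Hom(\Z[1/p],A)=0$) must vanish, so $\pi_*(\hL_pX)=0$ and $\hL_pX=0$. The other two implications in (d) are formal from the fibration once (b) is in hand. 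For (b), the cleanest route is direct: $S[1/p]$ is a Moore spectrum for $\Z[1/p]$, in which $p$ is a unit, so $p\cdot 1_{S[1/p]}$ is an equivalence, and therefore $p\cdot 1_{\hC_pX}=\uHom(p\cdot 1_{S[1/p]},X)$ is an equivalence, making $\hC_pX$ $\{p\}$-local.

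The only real subtlety I anticipate is the observation in (d) that a spectrum which is both $\{p\}$-local and $p$-complete must vanish; the rest of the proof amounts to bookkeeping against Lemma~\ref{lem-pi-hL}, the fibration $\hC_pX\to X\to\hL_pX$, and the additive functoriality of $\hL_p$ and $\hC_p$.
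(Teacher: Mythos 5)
Your proposal is correct, and parts (a), (b), and (c) are substantively the same as the paper's argument (minor differences like reading off $\hC_pX=0$ from the first rather than the third sequence of Lemma~\ref{lem-pi-hL} in (c) are cosmetic). The genuine difference is in (d). The paper shows $\hL_pX=0$ for $\{p\}$-local $X$ by invoking Corollary~\ref{cor-bousfield-LQ}(a) -- the orthogonality of $\{p\}$-torsion and $\{p\}$-local spectra from the $Q$-localisation machinery -- applied to $T=\Sg^{i-1}S/p^\infty$. You instead observe that $\hL_pX$ inherits $\{p\}$-locality by additivity of $\hL_p$, combine this with (a) to see each $\pi_i(\hL_pX)$ is simultaneously a $\Z[1/p]$-module and $\Ext$-$p$-complete, and then use the purely algebraic fact that such a group vanishes (since $\{p\}$-locality identifies $A$ with $\Hom(\Z[1/p],A)$, which is zero by $\Ext$-$p$-completeness). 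Your route avoids any appeal to the Bousfield-style orthogonality results from Section~\ref{sec-local}, making the argument more self-contained within the completion machinery; the paper's route is shorter given that Corollary~\ref{cor-bousfield-LQ} is already proved. Both are valid, and your identification of the both-local-and-complete-implies-zero lemma as the one real ingredient is apt.
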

\begin{proof}\leavevmode
 \begin{itemize}
  \item[(a)] Lemma~\ref{lem-pi-hL} gives a short exact sequence
   $L_0(\pi_i(X))\to\pi_i(\hL_pX)\to L_1(\pi_{i-1}X)$.  The first and
   last terms are $\Ext$-$p$-complete
   by~\cite[Proposition 12.28]{st:ata}, so $\pi_i(\hL_pX)$ is
   $\Ext$-$p$-complete by~\cite[Proposition 12.22(a)]{st:ata}.
  \item[(b)] Because $p$ times the identity is an equivalence on the
   spectrum $S[1/p]$, the same is true after applying the additive functor
   $\uHom(-,X)$, so the spectrum $\hC_pX=\uHom(S[1/p],X)$ is $\{p\}$-local.
  \item[(c)] If $X$ is $p$-complete then $L_1(\pi_{i-1}X)=0$ and the
   map $\pi_i(X)\to L_0(\pi_i(X))$ is an isomorphism.  Using the last
   short exact sequence from Lemma~\ref{lem-pi-hL} again, we see that
   the map $\tht\:X\to\hL_pX$ is an equivalence, so $\hC_pX=0$.
   Conversely, we know from~(a) that $\hL_pX$ is always $p$-complete,
   so if $\tht\:X\to\hL_pX$ is an equivalence, then $X$ must also be
   $p$-complete.
  \item[(d)] Suppose that $X$ is $\{p\}$-local.  By
   Corollary~\ref{cor-bousfield-LQ}(a), for any $\{p\}$-torsion
   spectrum $T$, we have $[T,X]=0$.  Taking $T=\Sg^{i-1}S/p^\infty$,
   we get $\pi_i(\hL_pX)=0$.  As this holds for all $i$, we have
   $\hL_pX=0$.  Conversely, if $\hL_pX=0$ then $X\simeq\hC_pX$ and
   $\hC_pX$ is $\{p\}$-local by~(b) so $X$ is $\{p\}$-local.
 \end{itemize}
\end{proof}

\begin{corollary}\label{cor-bousfield-hL}\leavevmode
 \begin{itemize}
  \item[(a)] If $X$ is $\{p\}$-local and $Y$ is $p$-complete then
   $[X,Y]=0$ 
  \item[(b)] Conversely, if $X$ is such that $[X,Y]=0$ for all
   $p$-complete $Y$, then $X$ is $\{p\}$-local.
  \item[(c)] Similarly, if $Y$ is such that $[X,Y]=0$ for all
   $\{p\}$-local $X$, then $Y$ is $p$-complete.
  \item[(d)] If $X\in\CB$ and $Y\in\hCB_p$ then the map
   $\tht^*\:[\hL_pX,Y]\to[X,Y]$ is an isomorphism.
  \item[(e)] Thus, $\hCB_p$ is the localisation of $\CB$ with
   respect to $S/p$ (in the sense of Bousfield).  The
   corresponding localisation functor is $\hL_p$, and the following
   acyclisation functor is $\hC_p$.
 \end{itemize}
\end{corollary}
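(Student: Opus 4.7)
The plan is to imitate the proof of Corollary~\ref{cor-bousfield-LQ} essentially verbatim, with $\hL_p$ in place of $L_Q$, $\hC_p$ in place of $C_Q$, and Proposition~\ref{prop-hL} in place of Proposition~\ref{prop-LQ}. All the structural ingredients are already in place.

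For~(a), given $f\:X\to Y$ with $X$ being $\{p\}$-local and $Y$ being $p$-complete, I would consider the naturality square for $\tht$. By Proposition~\ref{prop-hL}(d), $\hL_pX=0$, while by Proposition~\ref{prop-hL}(c), $\tht_Y\:Y\to\hL_pY$ is an equivalence. Commutativity of the naturality square then forces $\tht_Y\circ f=0$, hence $f=0$.

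For~(b), I would specialise the hypothesis to $Y=\hL_pX$ (which is $p$-complete by Proposition~\ref{prop-hL}(a)) to conclude that $\tht_X\:X\to\hL_pX$ is zero. The cofibration $\hC_pX\xra{\xi}X\xra{\tht}\hL_pX$ then splits, exhibiting $X$ as a retract of $\hC_pX$, which is $\{p\}$-local by Proposition~\ref{prop-hL}(b). Since the property of being $\{p\}$-local is equivalent to $p\cdot 1$ being an equivalence on homotopy (or equivalently preserved by retracts at the level of $\pi_*$), $X$ inherits this property. Part~(c) is symmetric: take $X=\hC_pY$, so $\xi_Y=0$, the cofibration splits the other way, and $Y$ becomes a retract of the $p$-complete spectrum $\hL_pY$; being $p$-complete is also a retract-closed property (it is a pointwise condition on $\pi_*$).

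For~(d), I would apply $[-,Y]$ to the cofibration $\hC_pX\to X\xra{\tht}\hL_pX$ to get the four-term exact sequence
\[ [\Sg\hC_pX,Y] \to [\hL_pX,Y] \xra{\tht^*} [X,Y] \to [\hC_pX,Y], \]
and observe that the outer two groups vanish by~(a), since $\Sg\hC_pX$ and $\hC_pX$ are $\{p\}$-local and $Y$ is $p$-complete. Finally~(e) is a direct reading of Bousfield's axioms for localisation (as recalled in~\cite[Section~3]{hopast:ash}): parts~(a) and~(d) say precisely that $\tht\:X\to\hL_pX$ is the reflection into the subcategory of objects orthogonal to $\{p\}$-local spectra, and part~(c) identifies this subcategory with $\hCB_p$. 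I do not anticipate a genuine obstacle; the only point that requires a moment's care is the retract closure statement invoked in~(b) and~(c), which is immediate from the definitions of $\{p\}$-local and $p$-complete as pointwise conditions on the homotopy groups.
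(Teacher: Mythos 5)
Your proposal is essentially correct and closely tracks the paper's proof, with one cosmetic difference and one small but real gap. For part~(a) you argue via the naturality square for $\tht$ (mirroring what the paper does in the proof of Corollary~\ref{cor-bousfield-LQ}(a)); that works, but the paper's proof of this part uses the adjoint/smash form instead: writing $X\simeq S[1/p]\ot X$ and $Y\simeq\uHom(S^{-1}/p^\infty,Y)$ gives $[X,Y]\simeq[S^{-1}/p^\infty\ot S[1/p]\ot X,Y]$, and $S^{-1}/p^\infty\ot S[1/p]=0$. Either route is fine. Parts~(b), (c) and~(d) are identical to the paper.

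For~(e) there is a gap you should close. The claim is not merely that $\hL_p$ is a localisation onto the right-orthogonal of the $\{p\}$-local spectra; it is that $\hCB_p$ is the Bousfield localisation \emph{with respect to $S/p$}. In Bousfield's framework the acyclic objects for $E$-localisation are by definition those $X$ with $E\ot X=0$. So you must identify ``$\{p\}$-local'' with ``$S/p$-acyclic'', i.e.\ with $S/p\ot X=0$. This is exactly the observation the paper invokes from Definition~\ref{defn-p-complete}(b): $X$ is $\{p\}$-local iff $p\cdot 1_X$ is an equivalence iff $X/p=S/p\ot X=0$. Without this link you have proved that $\hL_p$ is a reflection onto $\hCB_p$ away from the $\{p\}$-local spectra, but you have not tied that class to the spectrum $S/p$, which is the content of~(e).
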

\begin{proof}\leavevmode
 \begin{itemize}
  \item[(a)] If $X$ is $\{p\}$-local and $Y$ is $p$-complete then
   $X\simeq S[1/p]\ot X$ and $Y\simeq \uHom(S^{-1}/p^\infty,X)$ so 
   \[ [X,Y] \simeq [S[1/p]\ot X,\uHom(S^{-1}/p^\infty,Y)] \simeq
       [S^{-1}/p^\infty\ot S[1/p]\ot X,Y].
   \]
   Here $S^{-1}/p^\infty\ot S[1/p]=0$, so $[X,Y]=0$.
  \item[(b)] If $X$ is as described then the natural map
   $\tht\:X\to\hL_pX$ is zero, so the map $\xi\:\hC_pX\to X$ is a
   split epimorphism, and $\hC_pX$ is $\{p\}$-local so $X$ is
   $p$-local.
  \item[(c)] If $Y$ is as described, then the natural map
   $\xi\:\hC_pY\to Y$ is zero, so the natural map $\tht\:Y\to\hL_pY$
   is a split monomorphism, and $\hL_pY$ is $p$-complete, so $Y$ is
   $p$-complete. 
  \item[(d)] The cofibration $\hC_pX\to X\xra{\tht}\hL_pX$ gives an exact
   sequence
   $[\Sg\hC_pX,Y]\to [\hL_pX,Y]\xra{\eta^*}[X,Y]\to [\hC_pX,Y]$
   in which the first and last terms are zero by~(a), so $\tht^*$
   is an isomorphism. 
  \item[(e)] We noted in Definition~\ref{defn-p-complete} that $X$ is
   $\{p\}$-local iff $X\ot S/p=0$, which means that $X$ is
   $S/p$-acyclic in the sense of Bousfield.  Given this observation,
   the required axioms are precisely what we have proved above.
 \end{itemize}
\end{proof}

\begin{remark}
 It follows from Corollary~\ref{cor-bousfield-hL} that the category
 $\hCB_p$ is again a stable homotopy category in the sense
 of~\cite{hopast:ash}. 
\end{remark}

\begin{lemma}\label{lem-same-mod-p}
 For any $X$ we have $X_p/p=\hL_p(X)/p\simeq X/p$.
\end{lemma}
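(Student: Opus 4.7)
The first equality is just notation, since by definition $X_p = \hL_p X$. The content is therefore the equivalence $\hL_p(X)/p \simeq X/p$, where $Y/p$ denotes $Y \otimes S/p = \cof(p\cdot 1_Y)$.

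The plan is to exploit the fibration sequence $\hC_p X \xra{\xi} X \xra{\tht} \hL_p X$ from the definition of $\hL_p$. Smashing with $S/p$ is an exact functor on the stable homotopy category, so this yields a cofibre sequence
\[ \hC_p X / p \to X/p \to \hL_p(X)/p. \]
It therefore suffices to show that $\hC_p X / p \simeq 0$, for then the second map in the cofibre sequence is an equivalence, which is exactly the desired conclusion.

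To show $\hC_p X / p \simeq 0$, I would invoke Proposition~\ref{prop-hL}(b), which states that $\hC_p X$ is always $\{p\}$-local. By Definition~\ref{defn-p-complete}(b), being $\{p\}$-local means that $p \cdot 1_{\hC_p X}$ is an equivalence, or equivalently that $\hC_p X / p = \cof(p \cdot 1_{\hC_p X}) = 0$. Combining these two observations gives the result.

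There is essentially no obstacle here; the lemma is a direct consequence of the acyclisation/localisation fibration together with the already-established $\{p\}$-locality of $\hC_p X$. The only thing to be careful about is not conflating the two senses of ``$p$-local'' (the singleton-$\{p\}$ sense versus the standard ``inverting primes other than $p$'' sense): here we need the former, which is precisely what is encoded in $\hC_p X \otimes S/p = 0$.
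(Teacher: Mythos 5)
Your proof is correct, and it takes a mildly different route from the paper's.  The paper argues on the localisation side: it observes that $\hL_p$ preserves the cofibration $X\xra{p}X\to X/p$, so $\hL_p(X)/p\simeq\hL_p(X/p)$, and then invokes Example~\ref{eg-moore-exponent} to see that $X/p=S/p\ot X$ is uniformly $p$-torsion, hence $p$-complete, hence a fixed point of $\hL_p$.  You argue on the acyclisation side instead: you smash the fibration $\hC_pX\to X\to\hL_pX$ with $S/p$, and kill the first term using Proposition~\ref{prop-hL}(b) ($\hC_pX$ is $\{p\}$-local, so $\hC_pX/p=0$).  The two arguments are complementary in the obvious sense that one uses ``$\hL_p$ acts as the identity on $X/p$'' and the other uses ``$\hC_p X$ dies after $\ot S/p$''; both rest on Proposition~\ref{prop-hL}, which precedes this lemma, so the logical order is fine.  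One small advantage of the paper's phrasing is that it makes explicit the identification $\hL_p(X)/p=(X/p)_p$, which is the form quoted later (e.g.\ in Lemma~\ref{lem-p-moderate}(c)); your version produces the same equivalence but via the map $\tht\ot 1_{S/p}$, and it would be worth a sentence to note that this is the same natural map.
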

\begin{proof}
 First, $X_p/p$ is just alternative notation for $\hL_p(X)/p$.
 The functor $\hL_p$ preserves the cofibration $X\xra{p.1_X}X\to X/p$,
 so $X_p/p=(X/p)_p$.  However, Example~\ref{eg-moore-exponent} shows
 that $S/p$ is uniformly $p$-torsion, so the same is true for the
 spectrum $X/p=S/p\ot X$, so $(X/p)_p=X/p$.
\end{proof}

\begin{lemma}\label{lem-X-mod-p}
 For a $p$-complete spectrum $X$, we have $X=0$ if and only if the
 spectrum $X/p=S/p\ot X$ is zero.  Thus, a map $f\:X\to Y$ of
 $p$-complete spectra is an equivalence if and only if the induced map
 $X/p\to Y/p$ is an equivalence.
\end{lemma}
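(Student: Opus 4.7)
The plan is to reduce both claims to the orthogonality statement in Corollary~\ref{cor-bousfield-hL}(a): a spectrum that is simultaneously $\{p\}$-local and $p$-complete must be zero.

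For the first claim, the direction $X=0\Rightarrow X/p=0$ is trivial since $X/p=S/p\ot X$. For the converse, suppose $X$ is $p$-complete and $X/p=0$. Unwinding Definition~\ref{defn-p-complete}(b), the condition $X/p=\cof(p.1_X)=0$ means that $p.1_X$ is an equivalence, so each $\pi_i(X)$ has $p$ acting invertibly, i.e.\ $X$ is $\{p\}$-local. Now apply Corollary~\ref{cor-bousfield-hL}(a) with $Y=X$: since $X$ is $\{p\}$-local and $p$-complete, $[X,X]=0$, hence $1_X=0$ and $X=0$.

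For the second claim, let $f\:X\to Y$ be a map of $p$-complete spectra and let $C=\cof(f)$. The cofibration $X\to Y\to C$ gives a long exact sequence of homotopy groups; since the $\Ext$-$p$-complete groups form a class closed under extensions (cited earlier as~\cite[Proposition~12.22(a)]{st:ata}, and used already in the proof of Proposition~\ref{prop-hL}(a)), each $\pi_i(C)$ is $\Ext$-$p$-complete, so $C$ is $p$-complete. Smashing the cofibre sequence with $S/p$ gives $C/p\simeq\cof(f/p\:X/p\to Y/p)$, which vanishes by hypothesis. The first claim applied to $C$ now gives $C=0$, so $f$ is an equivalence.

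The only mild obstacle is verifying that $\hCB_p$ is closed under cofibres, but this is immediate from the long exact sequence and the closure of $\Ext$-$p$-complete groups under extensions, which is already in hand from the cited reference. No further input is needed.
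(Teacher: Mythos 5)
Your argument is essentially the same as the paper's: reduce the non-trivial direction of the first claim to Corollary~\ref{cor-bousfield-hL}(a) via the observation that $X/p=0$ makes $X$ both $\{p\}$-local and $p$-complete, forcing $1_X=0$; then apply the first claim to the cofibre of $f$. Your additional check that the cofibre of $f$ is $p$-complete (via closure of $\Ext$-$p$-complete groups under the relevant operations, or equivalently via $\hCB_p$ being a triangulated subcategory by Corollary~\ref{cor-bousfield-hL}(e)) is a detail the paper silently assumes, and it is correct.
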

\begin{proof}
 If $X/p=0$ then $X$ is $\{p\}$-local as well as $p$-complete, so
 $[X,X]=0$ by Corollary~\ref{cor-bousfield-hL}(a), so $1_X=0$, so
 $X=0$.  For the second claim, apply the first claim to the cofibre of
 $f$. 
\end{proof}

\begin{corollary}\label{cor-H-mod-p}
 Let $X$ be a bounded below $p$-complete spectrum with
 $H_*(X;\F_p)=0$; then $X=0$.  Thus, if $f\:X\to Y$ is a map of
 bounded below $p$-complete spectra, and the induced map
 $H_*(X;\F_p)\to H_*(Y;\F_p)$ is an isomorphism, then $f$ is an
 equivalence. 
\end{corollary}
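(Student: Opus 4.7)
The plan is to reduce everything to Lemma~\ref{lem-X-mod-p} and the bounded-below Hurewicz statement of Lemma~\ref{lem-bb-hurewicz}(a). Concretely, for the first claim I would argue that it suffices to show $X/p = 0$, because $X$ is $p$-complete and Lemma~\ref{lem-X-mod-p} then gives $X=0$. So the task becomes: show that the bounded-below spectrum $X/p = S/p \ot X$ has vanishing integral homology.

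For this I first note that $X/p$ is bounded below: if $\pi_i(X) = 0$ for $i < n$, then the long exact sequence associated with the cofibration $X \xra{p} X \to X/p$ gives $\pi_i(X/p) = 0$ for $i < n$ as well. Next I would identify $H \ot S/p$ with the mod-$p$ Eilenberg-MacLane spectrum $H\F_p$: smashing the cofibration $S \xra{p} S \to S/p$ with $H$ and taking homotopy groups shows $\pi_i(H \ot S/p) = 0$ for $i\neq 0$ and $\pi_0(H \ot S/p) = \Z/p$. Consequently
\[ H_*(X/p) = \pi_*(H \ot S/p \ot X) = \pi_*(H\F_p \ot X) = H_*(X;\F_p) = 0. \]
Lemma~\ref{lem-bb-hurewicz}(a) now yields $X/p = 0$, and hence $X = 0$.

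For the second claim I would apply the first to the cofibre $Z = \cof(f)$. If $X$ and $Y$ are bounded below and $p$-complete then so is $Z$: boundedness below is clear from the long exact sequence in homotopy, and $\Ext$-$p$-completeness is preserved under extensions by \cite[Proposition 12.22(a)]{st:ata}, so each $\pi_i(Z)$ (sitting in a short exact sequence whose outer terms are sub/quotients of $\Ext$-$p$-complete groups) is $\Ext$-$p$-complete. The hypothesis that $H_*(f;\F_p)$ is an isomorphism gives $H_*(Z;\F_p) = 0$ from the long exact sequence of $H\F_p$-homology, so the first part applies to $Z$ and yields $Z = 0$, making $f$ an equivalence.

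I do not anticipate a substantive obstacle: the only points that require a little care are the identification $H \ot S/p \simeq H\F_p$ (to convert the $\F_p$-homology hypothesis into an integral-homology statement after smashing with $S/p$) and the closure of $p$-completeness under cofibres in the second part. Both are routine given the machinery already developed in the preceding sections.
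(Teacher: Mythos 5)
Your proof is correct and follows essentially the same route as the paper: identify $H_*(X;\F_p)$ with $H_*(X/p)$, use the bounded-below Hurewicz vanishing (Lemma~\ref{lem-bb-hurewicz}) to get $X/p=0$, conclude $X=0$ via Lemma~\ref{lem-X-mod-p}, and pass to the cofibre for the map statement. One small imprecision in your cofibre argument: arbitrary subgroups and quotients of $\Ext$-$p$-complete groups need not be $\Ext$-$p$-complete (e.g.\ $\Z\sse\Z_p$); what you actually need is that kernels and cokernels of maps \emph{between} $\Ext$-$p$-complete groups are again $\Ext$-$p$-complete (together with closure under extensions), which is what~\cite{st:ata} provides, or alternatively one can simply observe that $\hCB_p$ is a Bousfield-local subcategory (Corollary~\ref{cor-bousfield-hL}) and hence closed under cofibres.
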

\begin{proof}
 Note that 
 \[ H_*(X;\F_p)=\pi_*(H/p\ot X)=
      \pi_*(H\ot S/p\ot X) = H_*(X/p).
 \]
 Thus, if $H_*(X;\F_p)=0$ then $X/p$ is a bounded below spectrum with
 $H_*(X/p)=0$, so $X/p=0$ by the Hurewicz Theorem, so $X=0$ by
 Lemma~\ref{lem-X-mod-p}.  For the second claim, apply the first claim
 to the cofibre of $f$.
\end{proof}

\begin{proposition}\label{prop-tensor-Zp}
 For finite spectra $X$ and $Y$ there are natural isomorphisms
 \[ [X,\hL_pY] \simeq [\hL_pX,\hL_pY] \simeq [X,Y]\ot\Z_p. \] 
\end{proposition}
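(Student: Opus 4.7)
The plan is to prove the two isomorphisms separately. The first is a direct application of Corollary~\ref{cor-bousfield-hL}(d): the target $\hL_pY$ is $p$-complete by Proposition~\ref{prop-hL}(a), so the unit map $\tht\:X\to\hL_pX$ induces an isomorphism $\tht^*\:[\hL_pX,\hL_pY]\to[X,\hL_pY]$. This step uses neither finiteness of $X$ nor of $Y$.

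For the second isomorphism I pass to internal homs. By the standard tensor-hom adjunction, for any spectra $A,B,C$ we have $\uHom(A,\uHom(B,C))\simeq\uHom(A\ot B,C)$, so taking $A=X$, $B=S^{-1}/p^\infty$ and $C=Y$ gives
\[
\uHom(X,\hL_pY) = \uHom(X,\uHom(S^{-1}/p^\infty,Y)) \simeq \uHom(S^{-1}/p^\infty,\uHom(X,Y)) = \hL_p\uHom(X,Y).
\]
Writing $Z=\uHom(X,Y)$ and taking $\pi_0$, the task reduces to identifying $\pi_0(\hL_pZ)$ with $\pi_0(Z)\ot\Z_p=[X,Y]\ot\Z_p$. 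The third short exact sequence of Lemma~\ref{lem-pi-hL}, applied to $Z$, provides
\[ L_0(\pi_0Z) \mra \pi_0(\hL_pZ) \era L_1(\pi_{-1}Z), \]
so it remains to evaluate $L_0$ and $L_1$ on the two homotopy groups appearing here.

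Because $X$ is finite and hence strongly dualisable (Proposition~\ref{prop-finite-global}), we have $Z\simeq DX\ot Y$; since $DX$ and $Y$ are both finite, so is $Z$, and therefore each $\pi_nZ=[X,\Sg^{-n}Y]$ is a finitely generated abelian group. The only real content of the argument is then the purely algebraic assertion that $L_1(A)=0$ and $L_0(A)\simeq A\ot\Z_p$ for every finitely generated $A$. This reduces via the structure theorem and the six-term exact sequence of $L_0$ and $L_1$ (together with flatness, hence exactness, of $-\ot\Z_p$) to the cases $A=\Z$ (where $\Ext(\Z/p^\infty,\Z)=\Z_p$ and $\Hom(\Z/p^\infty,\Z)=0$), $A=\Z/p^k$ (uniformly $p$-torsion, so Lemma~\ref{lem-uniform-torsion} gives $L_0=\text{id}$ and $L_1=0$, and likewise $\Z/p^k\ot\Z_p=\Z/p^k$), and $A=\Z/m$ with $(m,p)=1$ (where $\Z/m$ is $\{p\}$-local, killing both $L_0$ and $L_1$ as well as $-\ot\Z_p$). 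All of these appear in~\cite[Section 12]{st:ata}. With these identifications the displayed sequence collapses to $\pi_0(\hL_pZ)\simeq[X,Y]\ot\Z_p$, giving the second isomorphism. The main obstacle is thus not deep: it is the bookkeeping required to match $L_0$ with $-\ot\Z_p$ on finitely generated groups and to check that the resulting comparison is natural in $X$ and $Y$, both of which are routine.
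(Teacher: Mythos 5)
Your argument is correct and follows essentially the same route as the paper: reduce the first isomorphism to Corollary~\ref{cor-bousfield-hL}(d), slide the $S^{-1}/p^\infty$ through the closed structure to identify $[X,\hL_pY]$ with $\pi_0\hL_p\uHom(X,Y)$, and then compute $\pi_0\hL_p$ of a finite spectrum via Lemma~\ref{lem-pi-hL} and the algebraic facts that $L_1$ vanishes and $L_0\simeq(-)\ot\Z_p$ on finitely generated abelian groups. The only cosmetic difference is that you use the internal-hom swap $\uHom(X,\uHom(B,Y))\simeq\uHom(B,\uHom(X,Y))$ where the paper passes through $[X\ot S^{-1}/p^\infty,Y]$; your case-by-case justification of the algebraic input is slightly more explicit than the paper's appeal to ``standard calculations.''
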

\begin{proof}
 We first claim that for finite $Z$ we have
 $\pi_*(\hL_pZ)=\pi_*(Z)\ot\Z_p$.  Indeed, $\pi_i(\hL_pZ)$ has a
 natural structure as a $\Z_p$-module, so the natural map
 $\pi_i(Z)\to\pi_i(\hL_pZ)$ extends to give a natural map
 $\phi_Y\:\Z_p\ot\pi_i(Z)\to\pi_i(\hL_pZ)$.  Each group $\pi_i(Z)$ is
 finitely generated, and thus is a finite direct sum of cyclic groups.
 It follows by standard calculations that $L_1(\pi_i(Z))=0$ and
 $L_0(\pi_i(Z))=\Z_p\ot\pi_i(Z)$, so we can use Lemma~\ref{lem-pi-hL}
 to see that $\phi_Z$ is an isomorphism. 

 Now suppose we have a pair of finite spectra $X$ and $Y$.
 Corollary~\ref{cor-bousfield-hL}(d) gives 
 \begin{align*}
    [\hL_pX,\hL_pY] \simeq &
    [X,\hL_pY] = [X,\uHom(S^{-1}/p^\infty,Y)] = 
    [X\ot S^{-1}/p^\infty,Y] \\
   =& 
    \pi_0\uHom(S^{-1}/p^\infty,\uHom(X,Y)) = 
    \pi_0\hL_p\uHom(X,Y).
 \end{align*}
 Here $\uHom(X,Y)$ is again dualisable and therefore finite, so 
 we can take $Z=\uHom(X,Y)$ to get 
 \[ \pi_0\hL_p\uHom(X,Y)=\Z_p\ot\pi_0\uHom(X,Y) =
    \Z_p\ot [X,Y].
 \]
\end{proof}

\begin{construction}\label{cons-CW-step-complete}
 Let $X$ be a $p$-complete spectrum with $\pi_i(X)=0$ for $i<b$.  Using the
 cofibration $X\xra{p.1_X}X\to X/p$ with the fact that
 $\pi_{b-1}(X)=0$ we get $\pi_b(X/p)=\pi_b(X)/p$.  By the Hurewicz
 Theorem, this group is also isomorphic to $H_b(X/p)=H_b(X;\F_p)$.
 Choose a family of elements $(u_t)_{t\in T}$ in $\pi_b(X)$ which
 project to give a basis for $\pi_b(X)/p$ over $\F_p$.  These elements
 assemble to give a map $u'\:T_+\ot S^b\to X$, and we can apply
 $\hL_p$ to get a map $u$ from the spectrum $W=\hL_p(T_+\ot S^b)$
 to $X$.  We define $Y$ to be the cofibre of $u$.
\end{construction}

\begin{lemma}\label{lem-CW-step-complete}
 With notation as in Construction~\ref{cons-CW-step-complete}:
 \begin{itemize}
  \item[(a)] For $i\leq b$ we have $\pi_i(Y)=0$ and $H_i(Y;\F_p)=0$.
  \item[(b)] For $i>b$ we have $H_i(Y;\F_p)=H_i(X;\F_p)$.
 \end{itemize}
\end{lemma}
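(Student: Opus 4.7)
The plan is to apply the long exact sequences in $\pi_*$ and in $H_*(-;\F_p)$ to the cofibre sequence $W \xrightarrow{u} X \to Y$, after first computing these invariants of $W$. For the mod-$p$ homology I use Lemma \ref{lem-same-mod-p}: $W/p \simeq (T_+ \ot S^b)/p$, so $H_i(W;\F_p) = H_i(W/p;\Z)$ equals $\bigoplus_T \F_p$ in degree $b$ and vanishes otherwise. For the homotopy groups, Lemma \ref{lem-pi-hL} applied to $\bigoplus_T S^b$ shows $\pi_i(W) = 0$ for $i < b$ (since both the $L_0$ and $L_1$ contributions vanish), while $\pi_b(W) = L_0(\bigoplus_T \Z)$ (the $L_1$ contribution vanishes because $\bigoplus_T \Z$ is torsion-free).

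For part~(a), the $\pi_*$ long exact sequence gives $\pi_i(Y) = 0$ trivially for $i < b$. In degree $b$ we have $\pi_b(W) \xrightarrow{u_*} \pi_b(X) \to \pi_b(Y) \to \pi_{b-1}(W) = 0$, so it suffices to show $u_*$ is surjective. Both $\pi_b(W)$ and $\pi_b(X)$ are $\Ext$-$p$-complete ($W$ is $p$-complete by Proposition \ref{prop-hL}(a)), so by Lemma \ref{lem-detect-epi} it is enough to check surjectivity modulo $p$. Lemma \ref{lem-A-pe}(a) identifies $\pi_b(W)/p$ with $(\bigoplus_T \Z)/p = \bigoplus_T \F_p$, and the composite $\bigoplus_T \Z \to \pi_b(W) \xrightarrow{u_*} \pi_b(X)$ sends the $t$-th basis element to $u_t$; by construction these project to an $\F_p$-basis of $\pi_b(X)/p$. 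Hence $u_*$ is surjective modulo $p$, therefore surjective, so $\pi_b(Y) = 0$. The vanishing $H_i(Y;\F_p) = 0$ for $i \leq b$ then follows from the Hurewicz theorem applied to the bounded-below spectrum $Y$ (which has $\pi_i(Y) = 0$ for $i \leq b$) together with universal coefficients.

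For part~(b), the long exact sequence $H_{i+1}(Y;\F_p) \to H_i(W;\F_p) \xrightarrow{u_*} H_i(X;\F_p) \to H_i(Y;\F_p) \to H_{i-1}(W;\F_p)$ immediately gives $H_i(Y;\F_p) \cong H_i(X;\F_p)$ whenever $i > b+1$, because the flanking $W$-groups vanish. For $i = b+1$ I need $u_*$ injective in degree $b$; in fact it is an isomorphism, because it factors through $H_b(T_+ \ot S^b;\F_p) = \bigoplus_T \F_p$ and sends the $t$-th generator to the class of $u_t$ in $\pi_b(X)/p = H_b(X;\F_p)$, which is a basis by construction. This yields $H_{b+1}(Y;\F_p) \cong H_{b+1}(X;\F_p)$, completing the proof. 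The principal point, concentrated in part~(a), is the use of Lemma \ref{lem-detect-epi} to upgrade the mod-$p$ surjectivity built into the construction to genuine surjectivity on the (typically much larger) $p$-completions.
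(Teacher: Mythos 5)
Your proof is correct and follows essentially the same route as the paper: reduce the cofibre sequence mod $p$ (so $W/p=T_+\ot S^b/p$ by Lemma~\ref{lem-same-mod-p}) and exploit that the $u_t$ project to a basis of $\pi_b(X)/p$, making $u_*$ in degree $b$ an isomorphism mod $p$. You are in fact slightly more explicit than the paper at one point: the paper's proof establishes $\pi_i(Y/p)=0$ for $i\leq b$ and leaves implicit the deduction that $\pi_i(Y)=0$ from $\Ext$-$p$-completeness of $\pi_i(Y)$, whereas you make this explicit via Lemma~\ref{lem-detect-epi}. One harmless misattribution in a parenthetical: in the computation of $\pi_b(W)$ via Lemma~\ref{lem-pi-hL}, the $L_1$-term that vanishes is $L_1(\pi_{b-1}(T_+\ot S^b))=L_1(0)$, not $L_1(\bigoplus_T\Z)$ (though the latter is also zero, it is not the relevant group here).
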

\begin{proof}\leavevmode
 First, the cofibration $W\to X\to Y$ gives a cofibration
 $W/p\to X/p\to Y/p$, and $W/p=T_+\ot S^b/p$ by
 Lemma~\ref{lem-same-mod-p}.  The map $\pi_b(W/p)\to\pi_b(X/p)$ is an
 isomorphism by construction, so $\pi_i(Y/p)=0$ for $i\leq b$.  
 Next recall that $H_*(X;\F_p)=H_*(X/p)$.
 The cofibre sequence $W/p\to X/p\to Y/p$ gives a long exact sequence
 of homology groups, in which the group $H_b(W/p)=\pi_b(W/p)$ maps by
 an isomorphism to $H_b(X/p)=\pi_b(X/p)$ by construction, and all
 other homology groups of $W/p$ are trivial.  Claim~(b) follows from
 this. 
\end{proof}

\begin{definition}\label{defn-CW-complete}
 Let $X$ be a bounded below $p$-complete spectrum.  By a
 \emph{$p$-complete CW structure} on $X$ we mean a diagram
 \[ \dotsb X_{-2} \to X_{-1} \to X_0 \to X_1 \to X_2 \to \dotsb \to X
 \]
 such that 
 \begin{itemize}
  \item[(a)] $X_i=0$ for $i\ll 0$.
  \item[(b)] The cofibre $X_i/X_{i-1}$ is (equivalent to) a spectrum
   of the form $\hL_p((T_i)_+\ot S^i)$ for some set $T_i$.
  \item[(c)] The cofibre $X/X_i$ has $\pi_j(X/X_i)=0$ for $j\leq i$.  
 \end{itemize}
 We say that the structure is of \emph{finite type} if each of the
 sets $T_i$ is finite.  We say that the structure is \emph{finite} if
 in addition we have $T_i=\emptyset$ for almost all $i$.
\end{definition}

\begin{proposition}\label{prop-CW-complete}\leavevmode
 Any bounded below $p$-complete spectrum $X$ admits a $p$-complete CW
 structure
 $\{X_n\}_{n\in\Z}$ (with $X_n/X_{n-1}=\hL_p((T_n)_+\ot S^n)$ say) such
 that the following additional properties are satisfied:
 \begin{itemize}
  \item[(a)] If $\pi_i(X)=0$ for all $i<b$, then $X_i=0$ and
   $T_i=\emptyset$ for all $i<b$. 
  \item[(b)] If $H_i(X;\F_p)=0$ for all $i>t$ then $X_i=X$ for
   $i\geq t$, and $T_i=\emptyset$ for $i>t$.
  \item[(c)] If $H_i(X;\F_p)$ is finite for all $i$ then $T_i$ is
   finite for all $i$.
  \item[(d)] If $\bigoplus_iH_i(X;\F_p)$ is finite, then
   $\coprod_iT_i$ is finite.
 \end{itemize}
\end{proposition}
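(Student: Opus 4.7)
The plan is to mimic the proofs of Propositions~\ref{prop-CW} and~\ref{prop-CW-local}: apply Construction~\ref{cons-CW-step-complete} iteratively to build a tower $X = X^b \to X^{b+1} \to X^{b+2} \to \dotsb$ of $p$-complete spectra, show that its sequential colimit vanishes, and then use Construction~\ref{cons-flip} to flip the tower into a CW filtration of $X$. If $X = 0$ everything is trivial; otherwise let $b$ be the smallest integer with $\pi_b(X) \neq 0$, and set $X^k = X$ for $k \leq b$. Given a $p$-complete spectrum $X^k$ with $\pi_i(X^k) = 0$ for $i < k$, Construction~\ref{cons-CW-step-complete} produces a cofibration $W_k \to X^k \to X^{k+1}$ with $W_k = \hL_p((T_k)_+ \ot S^k)$, and Lemma~\ref{lem-CW-step-complete}(a) upgrades the vanishing range to $i \leq k$. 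Since $\hCB_p$ is closed under cofibres, $X^{k+1}$ is again $p$-complete, so the induction continues.

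Next, I would argue that $X^\infty = \colim_k X^k = 0$: homotopy commutes with sequential colimits and $\pi_i(X^k) = 0$ whenever $k > i$. Plugging this tower into Construction~\ref{cons-flip} with $X^{-\infty} = X$ produces $X_i = \fib(X \to X^{i+1})$ satisfying $X_i/X_{i-1} = W_i$, with homotopy colimit $\fib(X \to 0) = X$. The clauses of Definition~\ref{defn-CW-complete} then follow: $X_i = \fib(X \to X) = 0$ for $i \leq b$; the cofibres are by construction of the right form; and $X/X_i \simeq X^{i+1}$ combined with Lemma~\ref{lem-CW-step-complete}(a) gives $\pi_j(X/X_i) = 0$ for $j \leq i$.

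For the additional properties I would use Lemma~\ref{lem-CW-step-complete}(b) inductively, which yields $H_i(X^k;\F_p) = H_i(X;\F_p)$ for all $i \geq k$; in particular $|T_k| = \dim_{\F_p} H_k(X;\F_p)$. Property~(a) is immediate from the choice $X^k = X$ for $k \leq b$. For~(b), if $H_i(X;\F_p) = 0$ for $i > t$, then after the $k = t$ step $X^{t+1}$ is a bounded below $p$-complete spectrum with vanishing mod~$p$ homology, so Corollary~\ref{cor-H-mod-p} forces $X^{t+1} = 0$, which gives $X_t = X$ and $T_i = \emptyset$ for $i > t$. Properties~(c) and~(d) then follow from the identification of $|T_k|$, the latter by combining~(b) and~(c).

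The main obstacle, compared with the global and $Q$-local cases, is that one cannot detect vanishing of a $p$-complete spectrum directly from its integral homology, nor exploit freeness over the ground ring as in Proposition~\ref{prop-CW-local}. The key replacement is Corollary~\ref{cor-H-mod-p}, which detects vanishing of a bounded below $p$-complete spectrum through mod~$p$ homology, together with Lemma~\ref{lem-CW-step-complete}(a), according to which attaching $\hL_p$-cells on an $\F_p$-basis of $\pi_k(X^k)/p$ kills $\pi_k$ outright rather than merely reducing it.
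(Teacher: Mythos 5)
Your proposal follows the paper's proof essentially step for step: the same iterative application of Construction~\ref{cons-CW-step-complete} producing a tower $X = X^b \to X^{b+1} \to \dotsb$ with $X^\infty = 0$, the same flip via Construction~\ref{cons-flip}, and the same invocations of Lemma~\ref{lem-CW-step-complete} and Corollary~\ref{cor-H-mod-p} for the finiteness and truncation clauses. The one small point you spell out that the paper elides --- that $X^{k+1}$ is again $p$-complete because $\hCB_p$ is closed under cofibres, which is needed to re-apply Construction~\ref{cons-CW-step-complete} --- is a genuine and correct observation, and your identification $|T_k| = \dim_{\F_p} H_k(X;\F_p)$ is the cleanest way to see~(c) and~(d).
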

\begin{proof}
 If $X=0$ then everything is clear (with $X_n=0$ and $T_n=\emptyset$
 for all $n$).  Otherwise, let $b$ be the smallest integer such that
 $\pi_b(X)\neq 0$.  Put $X^k=X$ for $k\leq b$.  Then apply
 Construction~\ref{lem-CW-step-complete} repeatedly to give cofibrations
 $W_k\to X^k\to X^{k+1}$ where $W_k=\hL_p((T_k)_+\ot S^k)$.  Let
 $X^\infty$ be the homotopy colimit of the spectra $X^k$.  Then $\pi_i(X^\infty)$
 is the colimit of the groups $\pi_i(X^k)$, which are zero for $k>i$,
 so $\pi_i(X^\infty)=0$ for all $i$, so $X^\infty=0$.  Now put
 $X_i=\fib(X\to X^{i+1})$ as in Construction~\ref{cons-flip}.  The
 homotopy colimit of the spectra $X_i$ is then
 $\fib(X^{-\infty}\to X^\infty)=\fib(X\to 0)=X$, and
 $X_i/X_{i-1}=W_i$.  It follows that this gives a $p$-complete CW
 structure.  Property~(a) is clear by construction.  

 Using Lemma~\ref{lem-CW-step-complete} we see that $H_i(X^k;\F_p)=0$
 for $i<k$ and $H_i(X^k;\F_p)=H_i(X;\F_p)$ for $i\geq k$, so
 $H_i(X_k;\F_p)=H_i(X;\F_p)$ for $i\leq k$ and $H_i(X_k;\F_p)=0$ for
 $i>k$.  In particular, if $H_i(X;\F_p)=0$ for $i>t$ then when
 $k\geq t$ we have $H_*(X^{k+1};\F_p)=0$, which means that $X^{k+1}=0$
 by Corollary~\ref{cor-H-mod-p}, so $X_k=X$.  Claim~(b) follows from this. 

 Claim~(c) is clear given the above remarks about $H_*(X^k;\F_p)$, and
 claim~(d) follows from~(b) and~(c). 
\end{proof}

We would like to prove that if $X$ is bounded below and $p$-complete,
and $\bigoplus_iH_i(X;\F_p)$ is finite, then $X$ is equivalent to the
$p$-completion of a finite spectrum.  For this we will need a
construction that is similar to
Construction~\ref{cons-CW-step-complete}, but slightly different.
It will rely on the following algebraic result.

\begin{lemma}\label{lem-ABCD}
 Suppose we have an exact sequence $A\to B\xra{f}C\to D$ in which $A$
 and $D$ are finite, and $L_0(B)$ is finitely generated over $\Z_p$, and
 $L_1(B)=0$.  Then $L_0(C)$ is finitely generated over $\Z_p$, and
 $L_1(C)=0$. 
\end{lemma}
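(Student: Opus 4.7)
The plan is to split the four-term exact sequence through $I:=\img(f)$ into two short exact sequences and apply the six-term exact sequence for $\Ext^*(\Z/p^\infty,-)$ to each piece. Put $K=\ker(f)=\img(A\to B)$ and $J=\img(C\to D)$; these are finite, being respectively a quotient of $A$ and a subgroup of $D$. We then have short exact sequences $K\mra B\era I$ and $I\mra C\era J$.

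First note that for any finite abelian group $F$ we have $L_1(F)=0$ and $L_0(F)=F_p$ (the $p$-primary summand): the prime-to-$p$ part contributes nothing to either $\Hom(\Z/p^\infty,-)$ or $\Ext(\Z/p^\infty,-)$, and Lemma~\ref{lem-uniform-torsion} handles the $p$-primary summand. Applying $\Ext^*(\Z/p^\infty,-)$ to the first short exact sequence and substituting $L_1(K)=0=L_1(B)$ yields an exact sequence
\[
 0 \to L_1(I) \to L_0(K) \to L_0(B) \to L_0(I) \to 0.
\]
Thus $L_0(I)$ is a quotient of the finitely generated $\Z_p$-module $L_0(B)$, hence finitely generated, while $L_1(I)$ embeds into the finite $p$-group $L_0(K)=K_p$.

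The crux is to show $L_1(I)=0$. The point is that $L_1(M)=\Hom(\Z/p^\infty,M)$ is always $p$-torsion-free: the image of any $\phi\:\Z/p^\infty\to M$ is a quotient of $\Z/p^\infty$ and therefore divisible, and if $p^k\phi=0$ then this image lies in $M[p^k]$, so it is both divisible and annihilated by $p^k$, hence trivial, forcing $\phi=0$. Since $L_1(I)$ injects into the $p$-torsion group $K_p$ it is itself $p$-torsion, and a group that is both $p$-torsion-free and $p$-torsion must be zero.

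Finally, applying $\Ext^*(\Z/p^\infty,-)$ to $I\mra C\era J$ and substituting $L_1(I)=0=L_1(J)$ yields $L_1(C)=0$ together with a short exact sequence $L_0(I)\mra L_0(C)\era L_0(J)=J_p$. This expresses $L_0(C)$ as an extension of a finite $p$-group by a finitely generated $\Z_p$-module, so $L_0(C)$ is finitely generated over $\Z_p$ as required. The main obstacle is the vanishing $L_1(I)=0$; without the torsion-freeness observation one could only deduce that $L_1(I)$, and hence $L_1(C)$, is a finite $p$-group.
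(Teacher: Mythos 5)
Your proof is correct and follows essentially the same route as the paper: split the four-term sequence into short exact sequences $\ker(f)\mra B\era\img(f)$ and $\img(f)\mra C\era\cok(f)$, run the six-term $L_*$ sequence on each, and reduce everything to showing $L_1(\img(f))=0$. The only cosmetic difference is in how you justify that $L_1(\img(f))$ is $p$-torsion-free: you observe that the image of a map out of $\Z/p^\infty$ is divisible, so a homomorphism killed by $p^k$ has trivial image; the paper instead notes that multiplication by $p$ is surjective on $\Z/p^\infty$, hence precomposition with it is injective on $\Hom(\Z/p^\infty,-)$. Both are instances of the same fact, and your exposition is slightly more explicit.
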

\begin{proof}
 We have short exact sequences $\ker(f)\mra B\era\img(f)$ and
 $\img(f)\mra C\era\cok(f)$ in which $\ker(f)$ is a quotient of $A$
 and $\cok(f)$ is a subgroup of $D$, so both are finite.  If $U$ is
 finite then it is easy to see that $L_1(U)=0$ and $L_0(U)$ is the
 $p$-torsion summand in $U$.  We have a six term exact sequence
 \[ L_1(\ker(f)) \to L_1(B) \to L_1(\img(f)) \to 
    L_0(\ker(f)) \to L_0(B) \to L_0(\img(f)).
 \]
 We have $L_1(B)=0$ by assumption, so $L_1(\img(f))$ embeds in the
 finite $p$-group $L_0(\ker(f))$.  However,
 $L_1(\img(f))=\Hom(\Z/p^\infty,\img(f))$, and multiplication by $p$
 is surjective on $\Z/p^\infty$, so multiplication by $p$ is injective
 on $L_1(\img(f))$.  This can only be compatible if $L_1(\img(f))=0$.
 We also see from the above sequence that $L_0(\img(f))$ is a quotient
 of $L_0(B)$ and so is finitely generated over $\Z_p$.  Next, we have
 another six term exact sequence 
 \[ L_1(\img(f)) \mra L_1(C) \to L_1(\cok(f)) \to 
    L_0(\img(f)) \to L_0(C) \to L_0(\cok(f)).
 \]
 As $\cok(f)$ is finite we have $L_1(\cok(f))=0$, and we showed above
 that $L_1(\img(f))=0$, so we have $L_1(C)=0$.  Thus, the above
 sequence reduces to a short exact sequence
 $L_0(\img(f))\mra L_0(C)\era L_0(\cok(f))$ in which $L_0(\img(f))$ is
 finitely generated over $\Z_p$ and $L_0(\cok(f))$ is a finite abelian
 $p$-group.  It follows that $L_0(C)$ is finitely generated over
 $\Z_p$, as required.
\end{proof}

\begin{definition}\label{defn-p-moderate}
 We say that a spectrum $X$ is \emph{$p$-moderate} if 
 \begin{itemize}
  \item[(a)] For all $i\ll 0$ we have $\pi_i(X)=0$ (i.e. $X$ is
   bounded below).
  \item[(b)] For all $i$ we have $L_1(\pi_i(X))=0$.
  \item[(c)] For all $i$ the group $L_0(\pi_i(X))$ is a a finitely
   generated module over $\Z_p$.
 \end{itemize}
\end{definition}

\begin{lemma}\label{lem-complete-moderate}
 Suppose that $X$ is $p$-complete and bounded below, and $H_i(X;\F_p)$
 is finite for all $i$.  Then $X$ is $p$-moderate.
\end{lemma}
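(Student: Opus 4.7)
The plan is to reduce condition (c) of $p$-moderacy to finite generation of homotopy groups over $\Z_p$, then to verify this via the $p$-complete CW filtration from Proposition~\ref{prop-CW-complete}. Since $X$ is $p$-complete, each $\pi_i(X)$ is $\Ext$-$p$-complete, so $L_1(\pi_i(X))=0$ and $L_0(\pi_i(X))\simeq\pi_i(X)$; condition (a) is given by hypothesis, and (b) together with the identification in (c) is automatic, so only the finite generation of $\pi_i(X)$ as a $\Z_p$-module remains to be proved.

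First I apply Proposition~\ref{prop-CW-complete} to produce a $p$-complete CW structure $\{X_n\}$ on $X$ in which every $T_n$ is finite (using part (c) of that proposition together with the hypothesis that $H_i(X;\F_p)$ is finite). Write $W_n=\hL_p((T_n)_+\ot S^n)$ for the $n$-th cell spectrum. Since $(T_n)_+\ot S^n$ is a finite spectrum whose homotopy groups are finitely generated abelian in each degree (by Serre's classical theorem on $\pi_*(S)$), Proposition~\ref{prop-tensor-Zp} gives $\pi_j(W_n)\simeq\pi_j((T_n)_+\ot S^n)\ot\Z_p$, a finitely generated $\Z_p$-module; in particular each $W_n$ is itself $p$-complete.

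Next I argue by induction on $n$ that each $X_n$ is $p$-complete and each $\pi_j(X_n)$ is finitely generated over $\Z_p$. The base case is trivial because $X_n=0$ for $n\ll 0$. For the inductive step, $p$-completeness of $X_n$ follows because the $p$-complete spectra form a triangulated subcategory of $\CB$ (being the local objects of the Bousfield localization of Corollary~\ref{cor-bousfield-hL}), and the long exact sequence of the cofibration $X_{n-1}\to X_n\to W_n$ exhibits $\pi_j(X_n)$ as an extension of a $\Z_p$-submodule of $\pi_j(W_n)$ by a $\Z_p$-quotient of $\pi_j(X_{n-1})$. Since $\Z_p$ is noetherian, finite generation propagates through such extensions.

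Finally, property (c) of Definition~\ref{defn-CW-complete} gives $\pi_j(X/X_j)=0$, so the cofibration $X_j\to X\to X/X_j$ produces a surjection $\pi_j(X_j)\era\pi_j(X)$, and hence $\pi_j(X)$ is finitely generated over $\Z_p$ as required. I do not anticipate a serious obstacle; the point to keep track of is simply that the argument must stay inside the $p$-complete world, so that every group in sight is an $\Ext$-$p$-complete $\Z_p$-module and the noetherian property can be applied uniformly across the filtration.
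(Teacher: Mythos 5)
Your proposal is correct and follows essentially the same route as the paper's proof: apply Proposition~\ref{prop-CW-complete} to get a finite-type $p$-complete CW structure, observe that each $\pi_j(W_n)$ is finitely generated over $\Z_p$, propagate finite generation up the filtration (the paper phrases this as a thick subcategory argument, you phrase it as an induction over the long exact sequences, which is the same thing), and then conclude for $X$ itself by comparing with $X_n$ for $n$ large. The only substantive difference is cosmetic: the paper quotes $\pi_i(X)=\pi_i(X_{i+1})$ (an isomorphism) where you use the surjection $\pi_j(X_j)\era\pi_j(X)$, both of which suffice.
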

\begin{proof}
 We have $\pi_i(X)=0$ for $i\ll 0$ by assumption, and
 $L_1(\pi_i(X))=0$ because $X$ is assumed to be $p$-complete, so we
 just need to check(c).  We can choose a $p$-complete CW structure as in
 Proposition~\ref{prop-CW-complete}, with each set $T_k$ being finite.
 It follows that each spectrum $X_k$ lies in the thick subcategory
 generated by $S_p$.  Here $\pi_0(S_p)=\Z_p$ and $\pi_i(S_p)=0$ for
 $i<0$ and $\pi_i(S_p)$ is finite for $i>0$, so $S_p$ is $p$-moderate.
 By a thick subcategory argument, all the homotopy groups $\pi_i(X_k)$
 are finitely generated over $\Z_p$.  We also have
 $\pi_i(X)=\pi_i(X_{i+1})$, so $\pi_i(X)$ is finitely generated as
 requred. 
\end{proof}

\begin{lemma}\label{lem-p-moderate}
 If $X$ is $p$-moderate then: 
 \begin{itemize}
  \item[(a)] For all $i$ and $e\in\N$ we have
   $\pi_i(X_p)=L_0(\pi_i(X))$ and $\pi_i(X)/p^e=\pi_i(X_p)/p^e$ and
   $\pi_i(X)[p^e]=\pi_i(X_p)[p^e]$.  
  \item[(b)] Moreover, the group
   $T=\pi_i(X)[p^\infty]=\pi_i(X_p)[p^\infty]$ is a finite
   abelian $p$-group, and $\pi_i(X_p)\simeq \Z_p^r\oplus T$ for some
   $r\geq 0$.
  \item[(c)] The natural map $X/p\to X_p/p$ is an equivalence, and we
   have short exact sequences 
   \[ \pi_i(X)/p = \pi_i(X_p)/p \mra \pi_i(X/p) \era
        \pi_{i-1}(X)[p]=\pi_{i-1}(X_p)[p].
   \]
 \end{itemize}
\end{lemma}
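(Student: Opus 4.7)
The plan is to deduce everything from the $p$-moderate hypothesis together with the structural results on $L_0$ and $L_1$ established earlier. For claim~(a), I would start from the third short exact sequence of Lemma~\ref{lem-pi-hL},
\[ L_0(\pi_i(X)) \mra \pi_i(\hL_p X) \era L_1(\pi_{i-1}(X)), \]
and observe that the right-hand term vanishes by the $p$-moderate hypothesis, giving $\pi_i(X_p) = L_0(\pi_i(X))$. Given this, the identifications $\pi_i(X)/p^e = \pi_i(X_p)/p^e$ and $\pi_i(X)[p^e] = \pi_i(X_p)[p^e]$ are immediate from Lemma~\ref{lem-A-pe}(a) and~(b) applied to $A = \pi_i(X)$, the latter using that $L_1(\pi_i(X)) = 0$.

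For claim~(b), part~(a) already identifies $\pi_i(X_p)$ with $L_0(\pi_i(X))$, which is finitely generated over $\Z_p$ by hypothesis. Since $\Z_p$ is a discrete valuation ring (in particular a principal ideal domain), the structure theorem for finitely generated modules produces a decomposition $\pi_i(X_p) \simeq \Z_p^r \oplus T$ for some $r \geq 0$, where $T$ is a finite abelian $p$-group equal to the $p^\infty$-torsion subgroup of $\pi_i(X_p)$. The identification $T = \pi_i(X)[p^\infty] = \pi_i(X_p)[p^\infty]$ is then the $e = \infty$ case of the third identity in~(a), obtained by passing to the union over $e \in \N$.

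For claim~(c), the equivalence $X/p \simeq X_p/p$ is a direct invocation of Lemma~\ref{lem-same-mod-p} applied to the natural map $\tht\: X \to X_p$. For the short exact sequence, I would apply $\pi_*$ to the cofibration $X \xra{p.1_X} X \to X/p$ to obtain the long exact sequence
\[ \pi_i(X) \xra{p} \pi_i(X) \to \pi_i(X/p) \to \pi_{i-1}(X) \xra{p} \pi_{i-1}(X), \]
which breaks into the short exact sequence $\pi_i(X)/p \mra \pi_i(X/p) \era \pi_{i-1}(X)[p]$; the identifications with $\pi_i(X_p)/p$ and $\pi_{i-1}(X_p)[p]$ then come from~(a). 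The whole argument is an assembly of previously established results, and there is no real obstacle; the only point demanding care is the bookkeeping needed to ensure that the two $p$-moderate hypotheses feed correctly into each invocation of Lemma~\ref{lem-A-pe} and of the structure theorem.
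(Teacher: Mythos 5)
Your proof is correct and follows essentially the same route as the paper: part~(a) reads off $\pi_i(X_p)=L_0(\pi_i(X))$ from the third exact sequence of Lemma~\ref{lem-pi-hL} and then invokes Lemma~\ref{lem-A-pe}; part~(b) is the structure theorem for finitely generated modules over the DVR $\Z_p$ together with the $e=\infty$ case of~(a); and part~(c) combines Lemma~\ref{lem-same-mod-p} with the long exact sequence of the cofibration $X\xra{p}X\to X/p$. No gaps.
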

\begin{proof}\leavevmode
 \begin{itemize}
  \item[(a)] For any spectrum $X$ Lemma~\ref{lem-pi-hL} gives short
   exact sequences $L_0(\pi_i(X))\mra\pi_i(X_p)\era L_1(\pi_{i-1}(X))$,
   but we are assuming that the $L_1$ groups are trivial, so
   $L_0(\pi_i(X))=\pi_i(X_p)$.  The remaining claims then follow from
   Lemma~\ref{lem-A-pe}.
  \item[(b)] By assumption, the group $\pi_i(X_p)=L_0(\pi_i(X))$ is a
   finitely generated module over the discrete valuation ring $\Z_p$.
   Any such module $M$ is a finite direct sum of modules
   $\Z_p$ or $\Z/p^v$, and so has the form $\Z_p^r\oplus T$ for some
   $r\geq 0$ and some finite abelian $p$-group $T$, which is clearly
   the same as $M[p^\infty]$.
  \item[(c)] The first claim is Lemma~\ref{lem-same-mod-p}.  We can
   use the cofibration $X\xra{p.1_X}X\to X/p$
   together with~(a) to obtain the short exact sequences.
 \end{itemize}
\end{proof}

\begin{construction}\label{cons-alt-step}
 Let $X$ be a $p$-moderate spectrum such the group
 $\pi_i(X_p)=L_0(\pi_i(X))$ is zero for $i<b$.
 Put $T=\pi_b(X)[p^\infty]$, so $T$ is a finite abelian $p$-group and
 maps isomorphically to $\pi_b(X_p)[p^\infty]$.  Choose elements
 $x_0,\dotsc,x_{r-1}\in\pi_b(X_p)$ giving an isomorphism
 $\Z_p^r\oplus T\to\pi_b(X_p)$.  As $\pi_i(X_p)/p=\pi_i(X)/p$ by
 Lemma~\ref{lem-p-moderate}(c), we can choose elements $u_i\in\pi_b(X)$
 that map to $x_i$ in $\pi_b(X_p)/p$.  Put $A=\Z^r\oplus T$, so the
 elements $u_i$ give a map $f_0\:A\to\pi_b(X)$ of abelian groups.
 Lemma~\ref{lem-moore-maps} then allows us to choose a map
 $f\:\Sg^bSA\to X$ with $\pi_b(f)=f_0$.  We write $Y$ for the cofibre
 of $f$.
\end{construction}

\begin{lemma}\label{lem-alt-step}
 With notation as in Construction~\ref{cons-alt-step}:
 \begin{itemize}
  \item[(a)] $Y$ is $p$-moderate
  \item[(b)] The group $\pi_i(Y_p)=L_0(\pi_i(Y))$ is zero for
   $i\leq b$.
  \item[(c)] If $H_i(X;\F_p)=0$ for $i>t$, then also $H_i(Y;\F_p)=0$
   for $i>t$.
 \end{itemize}
\end{lemma}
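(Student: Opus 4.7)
The plan is to exploit the cofiber sequence $M\to X\to Y$ with $M=\Sg^b SA$ and the decomposition $SA=S(\Z^r)\oplus ST$: since $A=\Z^r\oplus T$ is finitely generated, $SA$ is a finite spectrum and so $\pi_i(M)=\pi_{i-b}(SA)$ is finitely generated abelian for every $i$, and zero for $i<b$.  For~(a), I would split the long exact sequence in homotopy into two short exact sequences $Z_i\mra\pi_i(X)\era\pi_i(X)/Z_i$ and $\pi_i(X)/Z_i\mra\pi_i(Y)\era Z'_{i-1}$, where $Z_i=\img(\pi_i(M)\to\pi_i(X))$ and $Z'_{i-1}=\ker(\pi_{i-1}(M)\to\pi_{i-1}(X))$ are subquotients of finitely generated groups, hence finitely generated.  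Such groups have $L_1=0$ and $L_0$ finitely generated over $\Z_p$; applying the six-term $L_0,L_1$-exact sequence to each SES, and using that any $L_1$-group is $p$-divisible while a finitely generated $\Z_p$-module has no nonzero $p$-divisible subgroup, I deduce $L_1(\pi_i(Y))=0$ and $L_0(\pi_i(Y))$ finitely generated over $\Z_p$.  Boundedness below of $Y$ is immediate from that of $M$ and $X$.

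For~(b), I would apply $\hL_p$ to the cofibration, giving $M_p\to X_p\to Y_p$.  Here $M_p$ is $(b-1)$-connective with $\pi_b(M_p)=L_0(A)=\Z_p^r\oplus T\cong\pi_b(X_p)$.  The key step is showing $\pi_b(f_p)$ is an isomorphism: by construction its mod-$p$ reduction is an isomorphism, both sides are $\Ext$-$p$-complete $\Z_p$-modules, so Lemma~\ref{lem-detect-epi} gives surjectivity, and the kernel $K$ is $\Ext$-$p$-complete with $K/p=0$, whence $L_0(K)=0$ and $K=0$.  The induced long exact sequence then forces $\pi_i(Y_p)=0$ for $i\leq b$.

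For~(c), the Moore-spectrum property $H_*(SA)=A$ (concentrated in degree $0$) together with the short exact sequence $H_*(M)/p\mra H_*(M;\F_p)\era H_{*-1}(M)[p]$ yields $H_b(M;\F_p)=A/p$, $H_{b+1}(M;\F_p)=T[p]$, with other degrees zero.  Identifying $H_*(X;\F_p)=H_*(X_p;\F_p)$ via Lemma~\ref{lem-same-mod-p}, the map $f_*$ in degree $b$ becomes the isomorphism of~(b) reduced mod $p$.  For $H_{b+1}$, naturality of the Bockstein boundary $\partial\:H_{*+1}(Z/p)\to H_*(Z)[p]$ gives a commuting square in which $\partial_M\:T[p]\to T[p]$ is the identity (since $H_{b+1}(M)=0$) and the map $H_b(M)[p]=T[p]\to H_b(X_p)[p]=T[p]$ is the identity (because $f_0|_T$ realises the identification $T=\pi_b(X)[p^\infty]$, and Hurewicz for the $(b-1)$-connective $X_p$); hence $\partial_{X_p}\circ f_*|_{T[p]}=\mathrm{id}$, so $f_*|_{T[p]}$ is injective.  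For $i>t$, the LES collapses to $H_i(Y;\F_p)\cong\ker(f_*\:H_{i-1}(M;\F_p)\to H_{i-1}(X;\F_p))$, which vanishes when $i-1\geq b+2$ (source zero), when $i-1=b+1$ (map just shown injective), and in the edge case $i-1=b$ (which forces $t=b$, hence $H_{b+1}(X;\F_p)=0$, which by Lemma~\ref{lem-A-pe} forces $T[p]=0$ and hence $T=0$).

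The main obstacle will be~(c): the $T[p]$-summand of $H_{b+1}(M;\F_p)$ does not vanish formally, and handling it relies on the careful design of Construction~\ref{cons-alt-step} --- specifically, sending the torsion factor $T\subseteq A$ into $\pi_b(X)[p^\infty]$ via the canonical identification --- which is precisely what makes the boundary-naturality argument yield the needed injectivity.
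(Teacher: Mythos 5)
There is a genuine gap in your part~(a), and it stems from a factually incorrect algebraic claim.  You assert that ``any $L_1$-group is $p$-divisible.''  This is the opposite of the truth: since multiplication by $p$ is \emph{surjective} on $\Z/p^\infty$, it is \emph{injective} on $L_1(C)=\Hom(\Z/p^\infty,C)$, i.e.\ $L_1$-groups are $p$-torsion-free, not $p$-divisible.  (Indeed $L_1(\Z/p^\infty)=\Z_p$, which is not $p$-divisible.)  With the corrected statement, your six-term argument still closes for $i>b$, because there $Z_i$ is a quotient of the \emph{finite} group $\pi_{i-b}(SA)$, so $L_0(Z_i)$ is a finite $p$-group, and a finite $p$-group has no nonzero subgroup on which multiplication by $p$ is injective.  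But for $i=b$ you have $Z_b\cong A$, and $L_0(Z_b)\simeq\Z_p^r\oplus T$ is a finitely generated $\Z_p$-module but not finite, and such a module certainly does contain nonzero $p$-torsion-free submodules (e.g.\ $\Z_p$).  So the purely formal argument cannot show $L_1(\pi_b(X)/Z_b)=0$; as a sanity check, if $\pi_b(X)=\Q$ and $Z_b=\Z$ were allowed, the quotient $\Q/\Z$ would have $L_1\neq 0$.  What rescues degree $b$ is precisely the special design of Construction~\ref{cons-alt-step}: one must first establish that $L_0(f_0)\:L_0(A)\to L_0(\pi_b(X))$ is an isomorphism (which you only do in~(b)), from which it follows that $L_0(Z_b)\to L_0(\pi_b(X))$ is injective and hence $L_1(\pi_b(X)/Z_b)=\ker(L_0(Z_b)\to L_0(\pi_b(X)))=0$.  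The paper's proof establishes exactly this isomorphism first and uses it directly in the degree-$b$ short exact sequence, and then appeals to the finiteness of $\pi_j(SA)$ for $j>0$ (packaged as Lemma~\ref{lem-ABCD}) for the higher degrees.  Your part~(a) needs to be restructured so that the degree-$b$ case is handled via this isomorphism rather than bulked together with the $i>b$ cases.

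Your parts~(b) and~(c) are essentially the paper's argument.  One small point in~(b): from surjectivity of $\phi=L_0(f_0)$ and $\phi$ being an isomorphism mod~$p$, it does not formally follow that the kernel $K$ has $K/p=0$ (the $\Tor$ term $B'[p]$ can surject onto $K/p$); the paper instead argues that $K$ is finite by a rank count, hence contained in the torsion $T$, and $\phi|_T$ is injective by construction, so $K=0$.  Your part~(c) matches the paper: the Bockstein/connecting-map square (the maps the paper calls $\phi$ and $\psi$) and its naturality is exactly how the paper shows the degree-$(b+1)$ map is injective, and your observation that in the edge case $t=b$ the group $T$ must vanish is the same reduction the paper makes.
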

\begin{proof}
 By applying the functor $\Sg^{-b}$ everywhere we can reduce to the
 case where $b=0$, which will simplify the notation.  We will freely
 make use of facts proved in Lemma~\ref{lem-p-moderate}.

 It is easy to check that $L_0(A)=\Z_p^r\oplus T$ and $L_1(A)=0$ and
 that the map
 $L_0(f_0)\:L_0(A)=\pi_0((SA)_p)\to\pi_0(X_p)\simeq\Z_p^r\oplus T$ is
 an isomorphism mod $p$.  As the source and target are
 $\Ext$-$p$-complete groups, Lemma~\ref{lem-detect-epi} tells us that
 this map is surjective.  As the source and target have the same rank,
 the kernel must be finite and so contained in $T$, but $L_0(f_0)$ is
 injective on $T$ by construction, so $L_0(f_0)$ is an isomorphism.
 It follows that the map $A\to\pi_0(X_p)$ is injective, and it factors
 through $\pi_0(f)\:A\to\pi_0(X)$, so $\pi_0(f)$ must be injective.
 As $\pi_{-1}(SA)=0$, this gives a short exact sequence
 \[ A = \pi_0(SA) \mra \pi_0(X) \era \pi_0(Y), \]
 and thus an exact sequence
 \[ L_1(A)\mra L_1(\pi_0(X)) \to L_1(\pi_0(Y)) \to 
     L_0(A) \xra{\simeq} L_0(\pi_0(X)) \era L_0(\pi_0(Y)).
 \]
 We have $L_1(A)=0$ by calculation, and $L_1(\pi_0(X))=0$ by
 assumption, and the map $L_0(A)\to L_0(\pi_0(X))$ is an isomorphism
 by construction.  It follows that $L_1(\pi_0(Y))=L_0(\pi_0(Y))=0$.
 It is standard that for $i>0$ the group $\pi_i(SA)$ is finite, and
 we have an exact sequence
 \[ \pi_i(SA) \to \pi_i(X) \to \pi_i(Y) \to
      \pi_{i-1}(SA) \to \pi_{i-1}(X)
 \]
 If $i>1$ then $\pi_i(SA)$ and $\pi_{i-1}(SA)$ are both finite so
 Lemma~\ref{lem-ABCD} immediately implies that $L_0(\pi_i(Y))$ is
 finitely generated over $\Z_p$ and $L_1(\pi_i(Y))=0$.  For $i=1$ the
 group $\pi_{i-1}(SA)=\pi_0(SA)=A$ need not be finite, but we have
 seen that the map $A\to\pi_0(X)$ is injective, so we have an exact
 sequence $\pi_1(SA)\to\pi_1(X)\to\pi_1(Y)\to 0$, and we get the same
 conclusion.  This completes the proof that $Y$ is $p$-moderate.
 
 Now suppose that the groups $H_i(X;\F_p)=H_i(X/p)$ are zero for
 $i>t$; we need to show that $Y$ has the same property.  Put
 $W=SA\ot S/p=SA_p\ot S/p$, so we have a cofibration 
 $W\to X/p\to Y/p$, with $\pi_i(X/p)=0$ for $i<0$ and $\pi_i(Y/p)=0$
 for $i\leq 0$.  It is straightforward to check that $H_0(W)=A/p$ and
 $H_1(W)=A[p]$ and all other homology groups are trivial.  It follows
 that $H_i(Y/p)=H_i(X/p)$ for all $i\geq 3$, which is enough whenever
 $t\geq 2$.  

 Now consider the case where $t=1$.  We have a diagram as follows.
 \begin{center}
  \begin{tikzcd}
   0=H_2(X/p) \arrow[r] &
   H_2(Y/p) \arrow[r] &
   H_1(W) \arrow[r,rightarrowtail] \arrow[d,rightarrowtail,"\phi"'] & 
   H_1(X/p) \arrow[r] \arrow[d,"\psi"] & 
   H_1(Y/p) \\
   & & \pi_0(SA_p) \arrow[r,"\simeq"] & 
   \pi_0(X_p)
  \end{tikzcd}
 \end{center}
 The top row arises from the cofibration $W\to X/p\to Y/p$ and so is
 exact.  The top left group $H_2(X/p)=H_2(X;\F_p)$ is zero by
 assumption because $2>t=1$.  Next recall that we have a cofibration
 $S\xra{p}S\xra{}S/p\xra{\bt}S^1$.  This gives a map
 $1_{SA_p}\ot\bt\:W=SA_p\ot S/p\to\Sg SA_p$ and thus a map
 $H_1(W)\to H_0(SA_p)$.  However, we also have a Hurewicz isomorphism
 $A_p=\pi_0(SA_p)\to H_0(SA_p)$, so we get a map
 $H_1(W)\to\pi_0(SA_p)$, which is the map marked $\phi$ in the above
 diagram.   The kernel is $H_1(SA)/p=0$, so $\phi$ is injective.  The
 map $\psi$ is produced in the same way starting from
 $1_{X_p}\ot\bt\:X/p\to\Sg X_p$, and this construction is natural
 so we have a commutative square as indicated.  The bottom map is an
 isomorphism by construction, so the map $H_1(W)\to H_1(X/p)$ must be
 injective, so $H_2(Y/p)=0$ as required.

 Finally consider the case where $k=0$, so $H_1(X/p)=0$.  The
 cofibration $W\to X/p\to Y/p$ gives an exact sequence
 $0\to H_1(Y/p)\to H_0(W)\to H_0(X/p)$, but the Hurewicz Theorem gives
 $H_0(W)=H_0(X/p)=A/p$, so $H_1(Y/p)=0$ as required.
\end{proof}

\begin{remark}
 Just as in Remark~\ref{rem-coprod}, the subcategory $\hCB_p\sse\CB$ is
 not closed under infinite coproducts, but $\hCB_p$ has its own
 coproducts, given by taking the coproduct in $\CB$ and then applying
 the functor $\hL_p$.  An object $X\in\hCB_p$ is (by definition)
 compact if and only if the functor $[X,-]$ sends these adjusted
 coproducts to direct sums.  Similarly, $\hCB_p$ has a closed symmetric
 monoidal structure, with the monoidal unit being $S_p$, monoidal
 product given by $(X,Y)\mapsto\hL_p(X\ot Y)$ and the internal
 mapping objects being the same as in $\CB$.  Thus, an object
 $X\in\hCB_p$ is strongly dualisable if and only if the natural map
 $\hL_p(\uHom(X,S_p)\ot X)\to \uHom(X,X)$ is an equivalence.
\end{remark}

\begin{proposition}\label{prop-finite-complete}
 Let $X$ be a bounded below $p$-complete spectrum.  Then the following
 are equivalent: 
 \begin{itemize}
  \item[(a)] $X$ admits a finite $p$-complete CW structure.
  \item[(b)] $X$ lies in the thick subcategory generated by $S_p=\hL_pS$.
  \item[(c)] $X$ is strongly dualisable in $\hCB_p$.
  \item[(d)] $X/p$ is a finite spectrum.
  \item[(e)] $\bigoplus_iH_*(X;\F_p)$ is finitely generated over $\F_p$.
  \item[(f)] There is a finite spectrum $X'$ with $X\simeq \hL_pX'$.
 \end{itemize}
\end{proposition}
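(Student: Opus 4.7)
The plan is to prove the cycle (a) $\Rightarrow$ (b) $\Rightarrow$ (c) $\Rightarrow$ (d) $\Rightarrow$ (e) $\Rightarrow$ (a), together with (e) $\Rightarrow$ (f) and (f) $\Rightarrow$ (a). The implication (a) $\Rightarrow$ (b) is immediate from the definition of a finite $p$-complete CW structure, and (f) $\Rightarrow$ (a) follows by applying $\hL_p$ to any finite CW structure on $X'$. For (b) $\Rightarrow$ (c) I would use that $S_p$, being the unit of $\hCB_p$, is trivially strongly dualisable, and strongly dualisable objects are closed under cofibres and retracts. For (d) $\Rightarrow$ (e), we have $H_*(X;\F_p) = \pi_*(H\ot S/p\ot X) = H_*(X/p)$ (integral homology); since $X/p$ is finite and uniformly $p$-torsion (Example~\ref{eg-moore-exponent}), this group is finite. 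For (e) $\Rightarrow$ (a) I would invoke Proposition~\ref{prop-CW-complete}: the hypothesis that $X$ is bounded below and satisfies $H_i(X;\F_p) = 0$ for $i > t$ confines the cells to a finite range via parts~(a) and~(b), and parts~(c) and~(d) make each $T_i$ finite with finite total, giving a finite $p$-complete CW structure.

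The remaining non-obvious implication among the cycle is (c) $\Rightarrow$ (d). Since $S/p$ is uniformly $p$-torsion, we have $\hL_p(X \ot S/p) = X \ot S/p = X/p$, and $S/p$ is itself strongly dualisable in $\hCB_p$ (as the cofibre of $p \cdot 1_{S_p}$), so $X/p$ inherits strong dualisability in $\hCB_p$. But because $X/p$ is uniformly $p$-torsion, its dual $\uHom(X/p, S_p)$ agrees with $\uHom(X/p, S)$, and every tensor product involving $X/p$ is automatically $p$-complete; hence the evaluation map computed in $\hCB_p$ coincides with the one in $\CB$. Therefore $X/p$ is strongly dualisable in $\CB$, and Proposition~\ref{prop-finite-global} identifies it with a finite spectrum.

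The main obstacle is (e) $\Rightarrow$ (f), which requires constructing a finite spectrum $X' \in \CB$ with $\hL_p X' \simeq X$. By Lemma~\ref{lem-complete-moderate}, $X$ is $p$-moderate. Starting from $Y^{(0)} = X$, I would iteratively apply Construction~\ref{cons-alt-step} to produce cofibrations $\Sg^{b_k} SA_k \to Y^{(k)} \to Y^{(k+1)}$ with each $A_k$ a finitely generated abelian group. By Lemma~\ref{lem-alt-step}, each $Y^{(k)}$ remains $p$-moderate, the condition $H_i(Y^{(k)};\F_p) = 0$ for $i > t$ is preserved throughout, and $b_k$ strictly increases with $k$. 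Set $Z^{(k)} = \fib(X \to Y^{(k)})$; each $Z^{(k)}$ is a finite spectrum, being an iterated extension of the finite Moore spectra $\Sg^{b_j} SA_j$. The crux is to show that $Y^{(k)}_p = 0$ once $b_k > t$: by Lemma~\ref{lem-same-mod-p}, $H_*(Y^{(k)}_p;\F_p) = H_*(Y^{(k)}/p) = H_*(Y^{(k)};\F_p)$ vanishes above $t$, and inductively on $n \geq b_k$, Hurewicz together with the universal coefficient theorem gives $\pi_n(Y^{(k)}_p) \ot \F_p = H_n(Y^{(k)}_p;\F_p) = 0$; since $\pi_n(Y^{(k)}_p) = L_0(\pi_n(Y^{(k)}))$ is a finitely generated $\Z_p$-module by $p$-moderateness, a Nakayama-type argument forces it to vanish. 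The resulting fibration $\hL_p Z^{(k)} \to X \to Y^{(k)}_p = 0$ then identifies $X$ with $\hL_p Z^{(k)}$, giving (f).
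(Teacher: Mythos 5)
Your proposal is correct and follows essentially the same route as the paper's own proof: the easy implications are handled identically, (c)$\Rightarrow$(d) reduces to strong dualisability of $X/p$ in $\CB$ (you get there by first showing $X/p$ is strongly dualisable in $\hCB_p$ and then matching the evaluation maps using uniform $p$-torsion, while the paper simply applies $-\ot S/p$ to the duality isomorphism and cites Lemma~\ref{lem-same-mod-p}, but these are the same observation packaged slightly differently), and for the central implication (e)$\Rightarrow$(f) you invoke $p$-moderateness via Lemma~\ref{lem-complete-moderate} and iterate Construction~\ref{cons-alt-step}/Lemma~\ref{lem-alt-step} exactly as the paper does. The only real deviation is cosmetic: the paper runs the single cycle (a)$\Rightarrow\dotsb\Rightarrow$(e)$\Rightarrow$(f)$\Rightarrow$(a), and deduces the vanishing of $(X^{t+1})_p$ from Corollary~\ref{cor-H-mod-p}, whereas you close the cycle at (e)$\Rightarrow$(a) via Proposition~\ref{prop-CW-complete} and deduce $Y^{(k)}_p=0$ by a Hurewicz/UCT/Nakayama induction; both are sound and the Nakayama argument is really the content of Corollary~\ref{cor-H-mod-p} re-derived from scratch.
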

\begin{proof}
 It is clear that~(a) implies~(b).  The category of strongly
 dualisable objects is always thick, and contains the unit object
 $S_p$, so~(b) implies~(c).  Suppose that $X$ is strongly dualisable,
 so $\hL_p(\uHom(X,S_p)\ot X)=\uHom(X,X)$.  Using
 Lemma~\ref{lem-same-mod-p} we deduce that
 $\uHom(X/p,S)\ot X/p=\uHom(X/p,X/p)$, so $X/p$ is strongly dualisable in
 $\CB$, and so is a finite spectrum by
 Proposition~\ref{prop-finite-global}.  This shows that~(c)
 implies~(d).  As $H_*(X;\F_p)=H_*(X/p)$, we see that~(d) implies~(e).  

 Now suppose that~(e) holds.  Lemma~\ref{lem-complete-moderate}
 implies that $X$ is $p$-moderate.  Let $b$ be the smaallest integer
 such that $\pi_b(X)\neq 0$, and let $t$ be the largest integer such
 that $H_t(X;\F_p)\neq 0$.  Define $X^k=X$ for $k\leq b$.  Use
 Construction~\ref{cons-alt-step} repeatedly to produce cofibrations
 $\Sg^kSA_k\to X^k\to X^{k+1}$ with $A_k$ being a finitely generated
 abelian group, such that $X^k$ is always $p$-moderate with
 $L_0(\pi_i(X^k))=0$ for $i<k$ and $H_i(X^k;\F_p)=0$ for $i>t$.  This
 means that $H_*(X^{t+1};\F_p)=0$, so $(X^{t+1})_p=0$.  Let $X_k$ be
 the fibre of the map $X\to X^{k+1}$.  We find that $X_{b-1}=0$ and
 $X_k/X_{k-1}=\Sg^kSA_k$, so each $X_k$ is a finite spectrum.  In
 particular, $X_t$ is a finite spectrum, and the cofibre
 $X^{t+1}=X/X_t$ has $(X/X_t)_p=0$, so the induced map $(X_t)_p\to X$
 is an equivalence.  This shows that~(e) implies~(f).  Finally, if~(f)
 holds then we can apply $\hL_p$ to a finite CW structure on $X'$ to
 obtain a finite $p$-complete CW structure on $X$, so~(a) holds.
\end{proof}

\begin{proposition}\label{prop-small-complete}
 Let $X$ be a bounded below $p$-complete spectrum.  Then the following
 are equivalent: 
 \begin{itemize}
  \item[(p)] $X$ lies in the thick subcategory generated by $S/p$
  \item[(q)] $X$ is a finite spectrum with $p^n.1_X=0$ for some $n$. 
  \item[(r)] $X$ is compact in $\hCB_p$
 \end{itemize}
\end{proposition}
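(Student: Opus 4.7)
\emph{Proof plan.}
The strategy is to establish the cycle (p)$\Rightarrow$(q)$\Rightarrow$(r)$\Rightarrow$(p); the first two implications are standard, and the third is the main content. For (p)$\Rightarrow$(q): the class of finite spectra $X$ with $p^N.1_X=0$ for some $N$ is thick. Closure under cofibres follows from the observation that, given a cofibration $X\xra{a}Y\xra{b}Z$ with $p^m.1_X=p^n.1_Z=0$, one has $b\circ(p^n.1_Y)=(p^n.1_Z)\circ b=0$, so $p^n.1_Y=a\circ f$ for some $f\:Y\to X$, and hence $p^{m+n}.1_Y=a\circ(p^m.1_X)\circ f=0$. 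Closure under finite coproducts, retracts, and suspensions is immediate, and $S/p$ itself satisfies $p^2.1_{S/p}=0$ by Example~\ref{eg-moore-exponent}, so (p) implies (q).

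For (q)$\Rightarrow$(r): let $\{Y_\al\}$ be a family in $\hCB_p$, with coproduct $\hL_p(\bigoplus_\al Y_\al)$ in $\hCB_p$. The fibre $\hC_p(\bigoplus_\al Y_\al)$ is $\{p\}$-local by Proposition~\ref{prop-hL}(b), so multiplication by $p^N$ acts invertibly on $[X,\hC_p(\bigoplus_\al Y_\al)]$; but this same multiplication is induced by $p^N.1_X=0$, so the group vanishes. Hence $[X,\hL_p(\bigoplus_\al Y_\al)]=[X,\bigoplus_\al Y_\al]$, and compactness of $X$ in $\CB$ (Proposition~\ref{prop-finite-global}) gives $\bigoplus_\al[X,Y_\al]$.

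The main obstacle is (r)$\Rightarrow$(p), which I would handle in two stages. First, a telescope argument shows $p^N.1_X=0$ for some $N$: the sequential diagram $X\xra{p}X\xra{p}\dotsb$ has homotopy colimit $\hL_p(X[1/p])$ in $\hCB_p$, which vanishes because $X[1/p]$ is $\{p\}$-local (Proposition~\ref{prop-hL}(d)). Compactness of $X$ in $\hCB_p$ identifies $\colim_n[X,X]=[X,X][1/p]$ with the group of maps into this homotopy colimit, so it is zero; applying the vanishing to $1_X$ yields the required $N$. Second, $p^N.1_X=0$ splits the cofibration $X\xra{p^N}X\to X\ot S/p^N$, giving $X\ot S/p^N\simeq X\oplus\Sg X$, so $X$ is a retract of $X\ot S/p^N$. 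Induction on $N$ using the cofibration $S/p\to S/p^N\to S/p^{N-1}$ (octahedron applied to $p^N=p^{N-1}\circ p$) shows that $S/p^N$ lies in the thick subcategory of $S/p$ in $\CB$, and this subcategory agrees with the corresponding one in $\hCB_p$ since all its objects are uniformly $p$-torsion. It therefore suffices to prove $X\ot S/p^N$ is a finite spectrum in $\CB$.

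For this I would show $X\ot S/p^N$ is compact in $\CB$ and apply Proposition~\ref{prop-finite-global}. The adjunction $-\ot S/p^N\dashv\uHom(S/p^N,-)$ combined with the finiteness of $S/p^N$ gives $[X\ot S/p^N,\bigoplus_\al Y_\al]=[X,\bigoplus_\al\uHom(S/p^N,Y_\al)]$ for any family $\{Y_\al\}$ in $\CB$. Each $\uHom(S/p^N,Y_\al)$ is uniformly $p$-torsion and hence $p$-complete, so the vanishing argument from the (q)$\Rightarrow$(r) step identifies this with maps into the $\hCB_p$-coproduct $\hL_p(\bigoplus_\al\uHom(S/p^N,Y_\al))$; compactness of $X$ in $\hCB_p$ then yields $\bigoplus_\al[X\ot S/p^N,Y_\al]$. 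The delicate point throughout is this shuttling between $\CB$- and $\hCB_p$-coproducts, invoked both in the telescope vanishing and in the final compactness calculation, and relying on the fact that $p$-completion is invisible to $[X,-]$ whenever $X$ is uniformly $p$-torsion.
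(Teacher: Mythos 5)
Your proof is correct, and it is close in spirit to the paper's: both arguments use the telescope $X\xra{p}X\xra{p}\dotsb$ colimiting to $\hL_p(X[1/p])=0$ to extract $p^n.1_X=0$ from compactness, and both exploit the vanishing of $[W,\hC_p(-)]$ for uniformly $p$-torsion $W$ to shuttle between $\CB$-coproducts and $\hCB_p$-coproducts. The routes differ in detail. You run the cycle (p)$\Rightarrow$(q)$\Rightarrow$(r)$\Rightarrow$(p), establishing compactness for an arbitrary finite uniformly $p$-torsion $X$ via the $\hC_p$-vanishing plus compactness in $\CB$; the paper instead proves (p)$\Leftrightarrow$(q), then (p)$\Rightarrow$(r) by directly computing that $S/p$ is compact in $\hCB_p$ using Spanier--Whitehead self-duality of $S/p$ together with Lemma~\ref{lem-same-mod-p}, and closes with (r)$\Rightarrow$(q). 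The more substantive difference is in the final leg. The paper's printed argument for (r)$\Rightarrow$(q) produces only $p^n.1_X=0$ and then asserts~(q), without separately arguing that $X$ is finite; you address this explicitly, showing by adjunction and the $\hC_p$-vanishing argument that $X\ot S/p^N$ is compact in $\CB$ and hence finite by Proposition~\ref{prop-finite-global}, so that $X$ (as a retract of a finite uniformly $p$-torsion spectrum) lands in $\operatorname{thick}(S/p)$. In fact, once you know $p^N.1_X=0$ you could apply your own shuttling argument directly to $X$, identifying $[X,\bigoplus_\al Y_\al]$ with $[X,\hL_p\bigoplus_\al Y_\al]$ for an arbitrary family in $\CB$ and concluding that $X$ itself is already compact in $\CB$, which shortens the last step; but your detour through $X\ot S/p^N$ is also fine. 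In either form, your proof supplies the finiteness half of (q) that the paper's terse treatment of (r)$\Rightarrow$(q) leaves implicit.
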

\begin{proof}
 If we have a cofibration $X\to Y\to Z$ with $p^n.1_X=0$ and
 $p^m.1_Z=0$ then it is not hard to check that $p^{n+m}.1_Y=0$.  Using
 this, we see that the category of spectra satisfying~(q) is thick.
 It contains $S/p$ by Example~\ref{eg-moore-exponent}, and using this
 we see that~(p) implies~(q).  For the converse, let $\CT$ be the
 thick subcategory generated by $S/p$.  There are cofibrations
 $S/p\to S/p^{k+1}\to S/p^k$ (corresponding to the obvious short exact
 sequences $\Z/p\mra \Z/p^{k+1}\era\Z/p^k$) and we can use these
 inductively to see that $S/p^n\in\CT$ for all $n$.  The category
 $\{X\st X\ot S/p^n\in\CT\}$ is thick and contains $S^0$, so it
 contains all finite spectra.  If $p^n.1_X=0$ then the cofibration
 $X\xra{p^n.1_X=0}X\to S/p^n\ot X$ shows that $X$ is a retract
 of $S/p^n\ot X$ and so lies in $\CT$.  From this we see that~(q)
 implies~(p).  We can apply the Spanier-Whitehead duality functor
 $D(-)=\uHom(-,S)$ to the cofibration 
 \[ S \xra{p} S \xra{} S/p \to S^1 \xra{p} S^1 \]
 to see that the dual of $S/p$ is $\Sg^{-1}S/p$.  Thus,
 using Lemma~\ref{lem-same-mod-p} we get 
 \[ \textstyle
    \left[S/p,\hL_p\left(\bigoplus_iX_i\right)\right] = 
    \left[S^1,S/p\ot\hL_p\left(\bigoplus_iX_i\right)\right] = 
    \left[S^1,S/p\ot\bigoplus_iX_i\right] = 
    \left[S/p,\bigoplus_iX_i\right] = \bigoplus_i[S/p,X_i].
 \]
 This shows that $S/p$ is compact in $\hCB_p$, so everything in the
 thick subcategory generated by $S/p$ is also compact, so~(p)
 implies~(r).  Conversely, suppose that $X$ is compact in $\hCB_p$.  As
 the functor $[X,-]$ preserves coproducts, it also sends sequential
 homotopy colimits to colimits.  The homotopy colimit of the sequence
 $X\xra{p}X\xra{p}X\to\dotsb$ in $\hCB_p$ is $\hL_p(S[1/p]\ot X)=0$,
 so the colimit of the sequence
 $[X,X]\xra{p}[X,X]\xra{p}[X,X]\to\dotsb$ is zero.  In particular the
 element $1_X$ in the first of these groups must map to zero in the
 colimit, which means that $p^n.1_X=0$ for some $n$.  Thus, (r)
 implies~(q).
\end{proof}

\begin{corollary}\label{cor-F-prime}
 Let $\CF$ be the category of finite spectra and let $\CF_p$ denote
 the category of strongly dualisable objects in $\hCB_p$.  Let $\CF'_p$
 be the category with the same objects as $\CF$, but with
 $\CF'_p(X,Y)=\CF(X,Y)\ot\Z_p$.  Then $\CF'_p$ is equivalent to
 $\CF_p$ (and so has a natural triangulation).
\end{corollary}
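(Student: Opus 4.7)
The plan is to construct a functor $F \colon \CF'_p \to \CF_p$, and show it is essentially surjective and fully faithful. On objects I define $F(X) = \hL_p X$. Essential surjectivity is then immediate from Proposition~\ref{prop-finite-complete}(f): every strongly dualisable object of $\hCB_p$ is equivalent to $\hL_p X'$ for some finite spectrum $X'$.

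On morphisms, Proposition~\ref{prop-tensor-Zp} provides a natural isomorphism
\[ \CF'_p(X,Y) = [X,Y]\ot\Z_p \xrightarrow{\sim} [\hL_p X,\hL_p Y] = \CF_p(FX,FY), \]
which I take as the definition of $F$ on morphisms. Concretely, $f\ot a \mapsto a\cdot\hL_p(f)$, where the $\Z_p$-action on $[\hL_p X,\hL_p Y] = \pi_0\hL_p\uHom(X,Y)$ is its natural module structure over $\pi_0 S_p = \Z_p$ (induced, for example, by the $S_p$-module structure on the target). Full faithfulness of $F$ is then automatic from Proposition~\ref{prop-tensor-Zp}.

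The main step to verify is that $F$ really is a functor, i.e.\ that it preserves identities and composition. Identities pose no issue. For composition, given $f\in[X,Y]$, $g\in[Y,Z]$ and $a,b\in\Z_p$, I need
\[ F\bigl((g\ot b)\circ(f\ot a)\bigr) = F(g\ot b)\circ F(f\ot a), \]
i.e.\ $ab\cdot\hL_p(gf) = (b\cdot\hL_p g)\circ(a\cdot\hL_p f)$. Since $\hL_p$ is a functor on $\CB$, we have $\hL_p(gf) = \hL_p(g)\circ\hL_p(f)$, so this reduces to the bilinearity identity $(b\psi)\circ(a\phi) = (ab)(\psi\circ\phi)$ valid in any category enriched over $\Z_p$-modules. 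That bilinearity holds here because the $\Z_p$-action on each hom group $[\hL_p X,\hL_p Y]$ is given by post-composition (or pre-composition) with scalar maps $a\cdot 1\colon\hL_p Y\to\hL_p Y$ arising from the $S_p$-module structure on $\hL_p Y$, and such scalar actions commute with composition in the standard way.

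I expect this compatibility check to be the main (though essentially routine) obstacle; the substantive work has already been done, with essential surjectivity coming from Proposition~\ref{prop-finite-complete} and the identification of hom sets from Proposition~\ref{prop-tensor-Zp}. Once $F$ is seen to be a functor, the combination of essential surjectivity and full faithfulness yields the equivalence $\CF'_p\simeq\CF_p$, and the triangulation on $\CF_p$ transports back to $\CF'_p$.
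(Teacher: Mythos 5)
Your proof takes essentially the same route as the paper: the functor is $\hL_p$ on objects, full-faithfulness comes from Proposition~\ref{prop-tensor-Zp}, and essential surjectivity from the implication (c)$\Rightarrow$(f) in Proposition~\ref{prop-finite-complete}. The only difference is that you spell out the (routine) verification that the Proposition~\ref{prop-tensor-Zp} isomorphisms respect composition, which the paper leaves implicit.
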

\begin{proof}
 Proposition~\ref{prop-tensor-Zp} shows that $\hL_p$ induces a full
 and faithful functor $\CF'_p\to\CF_p$.  The fact that~(c) implies~(f)
 in Proposition~\ref{prop-finite-complete} means that this is also
 essentially surjective.
\end{proof}

\section{Modules over the $p$-complete sphere}
\label{sec-module}

Just as in Remark~\ref{rem-modules}, we can 
use~\cite[Chapter III and Theorem VIII.2.1]{ekmm:rma} or 
\cite[Section 4.5 and Theorem 7.5.0.6]{lu:ha} or some similar
framework to produce examples of the following structure.

\begin{definition}
 A \emph{strict module category} for $S_p$ consists of a
 monogenic stable homotopy category $\CM_p$ together with an
 exact, strongly symmetric monoidal functor $P\:\CB\to\CM_p$ and
 a right adjoint $U\:\CM_p\to\CB$ such that $UPX$ is naturally
 isomorphic to $S_p\ot X$.  In this situation we will write $M\ot_pN$
 for the symmetric monoidal product in $\CM_p$, and we write $S_p$ for
 the unit object (which is the same as $P(S)$ and satisfies
 $U(S_p)=S_p$).
\end{definition}

For the rest of this section we assume that we have a given strict
module category as above. 

\begin{remark}
 Because $S_p$ is not compact in $\hCB_p$ we see that $\hCB_p$ is not
 monogenic and so cannot be equivalent to $\CM_p$.  Moreover, the
 image of $U$ contains spectra such as $U(P(S\Q))=S(\Q\ot\Z_p)$, which
 is not $p$-complete, so the functor $U$ does not even take values in
 $\hCB_p$.
\end{remark}

\begin{definition}
 For $M\in\CM_p$ we define
 $\pi_i(M)=\pi_i(U(M))=\CB(S^i,U(M))=\CM_p(\Sg^iP(S),M)$. 
\end{definition}

\begin{remark}\label{rem-conservative-complete}
 Because $\CM_p$ is assumed to be monogenic, we have $M=0$ iff
 $\pi_*(M)=0$ iff $\pi_*(U(M))=0$ iff $U(M)=0$.  By applying this to
 cofibres, we see that a morphism $f\:M\to N$ is an isomorphism iff
 $\pi_*(f)$ is an isomorphism iff $U(f)$ is an isomorphism.
\end{remark}

\begin{lemma}\label{lem-pi-P}
 For $X\in\CB$ we have $\pi_*(P(X))=\Z_p\ot\pi_*(X)$.
\end{lemma}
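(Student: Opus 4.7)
The plan is to reduce the claim to a comparison of homology theories on $\CB$. By the defining natural isomorphism $U(P(X)) \simeq S_p \ot X$ we have $\pi_i(P(X)) = \pi_i(U(P(X))) = \pi_i(S_p \ot X)$, so it suffices to produce a natural isomorphism $\Z_p \ot \pi_*(X) \simeq \pi_*(S_p \ot X)$ for $X \in \CB$.

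First I would observe that both $X \mapsto \pi_*(S_p \ot X)$ and $X \mapsto \Z_p \ot \pi_*(X)$ are homology theories on $\CB$: the former because smashing with a fixed spectrum is always a homology theory, and the latter because $\Z_p$ is torsion-free over $\Z$ hence flat, so $\Z_p \ot (-)$ is exact.

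Next I would construct the natural transformation $\phi_X \: \Z_p \ot \pi_*(X) \to \pi_*(S_p \ot X)$. Lemma~\ref{lem-pi-hL} applied to $X = S$ gives $\pi_0(S_p) = L_0(\Z) = \Z_p$, and Corollary~\ref{cor-bousfield-hL}(d) gives $[S_p, S_p] = [\hL_p S, S_p] = [S, S_p] = \pi_0(S_p) = \Z_p$. Thus each $\lm \in \Z_p$ yields a self-map of $S_p$, and hence a self-map of $S_p \ot X$, endowing $\pi_*(S_p \ot X)$ with a natural $\Z_p$-module structure. The unit $\eta \: S \to S_p$ then gives a natural $\Z$-linear map $\pi_*(X) \to \pi_*(S_p \ot X)$ via $f \mapsto (\eta \ot 1_X) \circ f$, which extends uniquely by scalar extension to the desired $\Z_p$-linear $\phi_X$. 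Naturality of $\eta$ as a map of spectra ensures that $\phi$ commutes with the boundary maps of cofibre sequences and with coproducts, so $\phi$ is a morphism of homology theories.

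Finally I would verify that $\phi_S$ is an isomorphism and invoke the standard comparison principle (a morphism of homology theories on $\CB$ that is an isomorphism on $S$ is an isomorphism on every spectrum, since $S$ generates $\CB$ as a localising subcategory). Lemma~\ref{lem-pi-hL} yields a short exact sequence $L_0(\pi_n(S)) \mra \pi_n(S_p) \era L_1(\pi_{n-1}(S))$. The groups $\pi_n(S)$ are zero for $n < 0$, equal to $\Z$ for $n = 0$, and finite for $n \geq 1$, so in every case $L_1(\pi_{n-1}(S)) = 0$: any homomorphism from the divisible group $\Z/p^\infty$ to a torsion-free or finite abelian group vanishes. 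Hence $\pi_n(S_p) = L_0(\pi_n(S))$, and for finitely generated $A$ the identification $L_0(A) = \Z_p \ot A$ is standard (cf.~\cite[Section 12]{st:ata}), so $\phi_S$ is an isomorphism in every degree. The step most likely to require care is the construction of the $\Z_p$-module structure on $\pi_*(S_p \ot X)$ and the check that $\phi$ is a bona fide morphism of homology theories; once the identification $[S_p, S_p] = \Z_p$ is in place the rest is formal.
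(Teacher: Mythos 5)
Your proof is correct and follows essentially the same route as the paper: reduce to $\pi_*(S_p\ot X)$, observe both sides are homology theories using flatness of $\Z_p$, produce a natural comparison map, check it is an isomorphism at $X=S$, and conclude. The paper simply writes down the pairing $\pi_0(S_p)\ot\pi_*(X)\to\pi_*(S_p\ot X)$ directly rather than first establishing the $\Z_p$-module structure via $[S_p,S_p]=\Z_p$ and extending by scalars, but the resulting map and argument are the same.
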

\begin{remark}
 By definition $\pi_i(P(X))=\pi_i(U(P(X)))$, and $U(P(X))=S_p\ot X$ by
 the axioms for a strict module category.  There is an evident pairing
 $\al_X\:\Z_p\ot\pi_*(X)=\pi_0(S_p)\ot\pi_*(X)\to\pi_*(S_p\ot X)$,
 which is clearly an isomorphism when $X=S$.  As $\Z_p$ is flat, the
 domain of $\al$ is a homology theory as well as the codomain, and it
 follows that $\al_X$ is an isomorphism for all $X$.
\end{remark}

\begin{definition}\leavevmode
 \begin{itemize}
  \item[(a)] We say that $M\in\CM_p$ is $p$-complete iff each group
   $\pi_i(M)$ is $\Ext$-$p$-complete iff $U(M)$ is $p$-complete.
   We write $\hCM_p$ for the full subcategory of $p$-complete objects
   in $\CM_p$.
  \item[(b)] We say that $M$ is $\{p\}$-local iff each group
   $\pi_i(M)$ is $\{p\}$-local iff $p.1_M$ is an isomorphism iff
   $U(M)$ is $\{p\}$-local iff the object $M/p=\cof(p.1_M)$ is zero. 
  \item[(c)] We define $\hC_p,\hL_p\:\CM_p\to\CM_p$ by
   $\hC_p(M)=\uHom(P(S[1/p]),M)$ and $\hL_p(M)=\uHom(P(S^{-1}/p^\infty),M)$.
  \item[(d)] We define $\hP=\hL_p\circ P\:\CB\to\CM_p$.
 \end{itemize}
\end{definition}

\begin{remark}\label{rem-moore-module}
 As $S_p=P(S)$ is the unit object and tensor products preserve
 cofibrations we see that the object $M/p=\cof(p.1_M)$ mentioned above
 can also be identified with $P(S/p)\ot_pX$.
\end{remark}

\begin{lemma}\label{lem-enriched-adjoint}
 For $T\in\CB$ and $M\in\CM_p$ we have a natural isomorphism 
 $\uHom(T,UM)\simeq U\uHom(PT,M)$ in $\CB$.
\end{lemma}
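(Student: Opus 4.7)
The plan is to reduce the claimed identity to a chain of natural isomorphisms of hom sets and then invoke Yoneda. Specifically, because $\CB$ is monogenic (the unit $S$ generates), an object of $\CB$ is determined up to canonical isomorphism by the functor $\CB(-,A)$ it represents, so it suffices to exhibit a natural (in $X \in \CB$) isomorphism
\[
  \CB(X, U\uHom(PT,M)) \;\simeq\; \CB(X, \uHom(T,UM)),
\]
from which the desired isomorphism of objects in $\CB$ can be extracted.

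To build this, I would compose five natural isomorphisms:
\begin{align*}
  \CB(X, U\uHom(PT,M))
    &\simeq \CM_p(PX,\, \uHom(PT,M)) \\
    &\simeq \CM_p(PX \ot_p PT,\, M) \\
    &\simeq \CM_p(P(X \ot T),\, M) \\
    &\simeq \CB(X \ot T,\, UM) \\
    &\simeq \CB(X,\, \uHom(T,UM)).
\end{align*}
Here the first and fourth steps use the adjunction $P \dashv U$; the second and fifth are the tensor-hom adjunctions internal to the closed symmetric monoidal structures on $\CM_p$ and $\CB$ respectively; and the third is the structural isomorphism $P(X \ot T) \simeq PX \ot_p PT$ that comes with the strong symmetric monoidal functor $P$.

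There is essentially no obstacle here: the proof is pure bookkeeping with adjunctions and with the coherence isomorphisms of a strong monoidal functor. The only item requiring care is that each of the five isomorphisms is natural in $X$, which is built into each input (unit/counit of $P \dashv U$, the coherent monoidal structure on $P$, and the tensor-hom adjunctions). As a sanity check, one can also construct the map directly: from the evaluation $PT \ot_p \uHom(PT,M) \to M$ together with the counit $PU \to \mathrm{id}$ one produces a map $T \ot U\uHom(PT,M) \to UM$ via $P \dashv U$, and passing across the tensor-hom adjunction in $\CB$ yields a canonical comparison map $U\uHom(PT,M) \to \uHom(T,UM)$; the displayed chain then shows it is an isomorphism.
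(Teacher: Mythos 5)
Your proof is correct and is essentially identical to the paper's: both establish the same chain of five natural isomorphisms (using the $P\dashv U$ adjunction, the tensor-hom adjunctions in $\CB$ and $\CM_p$, and the strong monoidal structure on $P$) and then conclude by Yoneda. The paper simply writes the chain in the opposite order.
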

\begin{proof}
 For $X\in\CB$ we have natural isomorphisms
 \begin{align*}
  \CB(X,\uHom(T,UM)) =&
  \CB(X\ot T,UM) =
  \CM_p(P(X\ot T),M) = \\ 
  & \CM_p(PX\ot_p PT,M) = 
  \CM_p(PX,\uHom(PT,M)) = 
  \CB(X,U\uHom(PT,M)).
 \end{align*}
 The claim follows by the Yoneda Lemma.
\end{proof}

\begin{example}\label{eg-enriched-adjoint}
 By taking $T=S^{-1}/p^\infty$  or $T=S[1/p]$ in the lemma we see that
 $U\hL_p=\hL_pU$ and $U\hC_p=\hC_pU$, so the following diagram commutes
 up to natural isomorphism.
 \begin{center}
  \begin{tikzcd}
   \CM_p \arrow[d,"U"'] & 
   \CM_p \arrow[l,"\hL_p"'] \arrow[r,"\hC_p"] \arrow[d,"U"'] &
   \CM_p \arrow[d,"U"'] \\
   \CB &
   \CB \arrow[l,"\hL_p"] \arrow[r,"\hC_p"'] & 
   \CB
  \end{tikzcd}
 \end{center}
\end{example}

\begin{proposition}\label{prop-hL-module}\leavevmode
 \begin{itemize}
  \item[(a)] $\hL_pM$ is always $p$-complete.   
  \item[(b)] $\hC_pM$ is always $\{p\}$-local.
  \item[(c)] An object $M\in\CM_p$ is $p$-complete iff $\hC_pM=0$ iff the map
   $\tht\:M\to\hL_pM$ is an equivalence.
  \item[(d)] An object $M\in\CM_p$ is $\{p\}$-local iff $\hL_pX=0$ iff the map
   $\xi\:\hC_pX\to X$ is an equivalence.
 \end{itemize}
\end{proposition}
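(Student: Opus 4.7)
The plan is to bootstrap everything from the corresponding statements in $\CB$ (namely Proposition~\ref{prop-hL}) by applying the functor $U\:\CM_p\to\CB$. Two facts make this essentially automatic: first, $U$ is conservative and detects the properties in question by Remark~\ref{rem-conservative-complete}, since $p$-completeness and $\{p\}$-locality for $M\in\CM_p$ are defined in terms of $\pi_*(M)=\pi_*(U(M))$; second, Example~\ref{eg-enriched-adjoint} shows that $U$ intertwines $\hL_p$ and $\hC_p$ in $\CM_p$ with the same-named functors on $\CB$.

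For~(a), I apply $U$ to $\hL_pM$, use $U\hL_p\simeq\hL_pU$ to get $\hL_p(UM)$, and invoke Proposition~\ref{prop-hL}(a) to conclude that $U(\hL_pM)$ is $p$-complete in $\CB$; by the definition of $p$-completeness in $\CM_p$ this gives that $\hL_pM$ is $p$-complete. For~(b), exactly the same argument using $U\hC_p\simeq\hC_pU$ and Proposition~\ref{prop-hL}(b) shows that $\hC_pM$ is $\{p\}$-local.

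For~(c), the ``only if'' directions combine conservativity of $U$ with the commutation of $U$ and $\hC_p$, $\hL_p$: if $M$ is $p$-complete then $UM$ is $p$-complete, so by Proposition~\ref{prop-hL}(c) we have $\hC_p(UM)=0$ and the map $UM\to\hL_p(UM)$ is an equivalence; transporting these across the natural isomorphisms $U\hC_pM\simeq\hC_p(UM)$ and $U\hL_pM\simeq\hL_p(UM)$ gives $U(\hC_pM)=0$ and $U(\tht_M)$ an equivalence, and then conservativity of $U$ upgrades these to $\hC_pM=0$ and $\tht_M$ an equivalence. The ``if'' direction of the second equivalence uses~(a): if $\tht_M\:M\to\hL_pM$ is an equivalence then $M$ is $p$-complete because $\hL_pM$ is. Part~(d) is identical, interchanging the roles of $\hL_p$ and $\hC_p$ and using Proposition~\ref{prop-hL}(d) together with~(b).

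There is no real obstacle here: the entire content is that $U$ preserves and reflects both the defining properties (by Remark~\ref{rem-conservative-complete}) and the relevant functorial constructions (by Example~\ref{eg-enriched-adjoint}), so the four claims reduce term-by-term to Proposition~\ref{prop-hL}. The only point to watch is to be explicit about invoking conservativity of $U$ when promoting equivalences and vanishing from $\CB$ back to $\CM_p$, since the analogous statement in $\CB$ is what does all the substantive work.
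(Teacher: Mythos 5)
Your proposal is correct, and it is essentially the paper's own proof: the paper states in one line that Remark~\ref{rem-conservative-complete} and Example~\ref{eg-enriched-adjoint} reduce everything to Proposition~\ref{prop-hL}, and your argument is a careful spelling-out of exactly that reduction.
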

\begin{proof}
 Remark~\ref{rem-conservative-complete} and
 Example~\ref{eg-enriched-adjoint} reduce everything to the
 corresponding claims in Proposition~\ref{prop-hL}.
\end{proof}

\begin{lemma}\label{lem-M-pinv}
 For any $M\in\CM_p$ we have a natural map $M\to P(S[1/p])\ot_pM$
 which induces an isomorphism
 $\pi_*(M)[1/p]\to\pi_*(P(S[1/p])\ot_pM)$.  Thus, this map is an
 isomorphism if and only if $M$ is $\{p\}$-local.
\end{lemma}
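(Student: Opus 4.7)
The plan is to construct the natural map from the unit map $\eta\:S\to S[1/p]$ in $\CB$, and then prove the isomorphism by a homology-theory comparison on the monogenic category $\CM_p$.

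First I would define the map. Apply the exact symmetric monoidal functor $P$ to $\eta$ to obtain $P(\eta)\:S_p=P(S)\to P(S[1/p])$ in $\CM_p$, and tensor with $M$ over $S_p$ to get the natural morphism $M=S_p\ot_p M\to P(S[1/p])\ot_p M$.

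Next I would observe that $p.1_{S[1/p]}$ is an equivalence in $\CB$ (essentially by construction; see Proposition~\ref{prop-LQ}(a)), so since $P$ is additive, $p.1_{P(S[1/p])}$ is an equivalence in $\CM_p$, and therefore $p$ acts invertibly on $\pi_*(P(S[1/p])\ot_p M)$. The universal property of localisation then yields a unique factorisation $\pi_*(M)[1/p]\to\pi_*(P(S[1/p])\ot_p M)$ through which $\pi_*(M)\to\pi_*(P(S[1/p])\ot_p M)$ factors.

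The key step is to show this factorisation is an isomorphism. I would argue this via a homology-theory comparison, in the same style as the proof of Proposition~\ref{prop-LQ}(a): both $F(M)=\pi_*(M)[1/p]$ and $G(M)=\pi_*(P(S[1/p])\ot_p M)$ are exact in $M$ and preserve coproducts (the latter because $S_p$ is compact in the monogenic category $\CM_p$ and $P(S[1/p])\ot_p(-)$ preserves cofibrations and coproducts). On $M=S_p$ both sides equal $\Z_p[1/p]\ot\pi_*(S)$, using Lemma~\ref{lem-pi-P} to compute $\pi_*(P(S[1/p]))=\Z_p\ot\pi_*(S)[1/p]$. The localising subcategory of objects $M\in\CM_p$ on which the natural transformation $F\to G$ is an isomorphism is thus all of $\CM_p$, by monogenicity.

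The last claim is then automatic: by Remark~\ref{rem-conservative-complete}, the morphism $M\to P(S[1/p])\ot_p M$ in $\CM_p$ is an isomorphism iff it is an isomorphism on $\pi_*$, and by the preceding step this holds iff $\pi_*(M)\to\pi_*(M)[1/p]$ is an isomorphism, i.e.\ iff $\pi_*(M)$ is $\{p\}$-local, i.e.\ iff $M$ is $\{p\}$-local. The only substantive step is the homology-theory comparison, but I expect no real obstacle given the monogenicity of $\CM_p$ and the standard arguments already used in the paper.
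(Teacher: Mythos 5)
Your proposal is correct, but it takes a genuinely different route from the paper. The paper's proof leans on the explicit presentation of $S[1/p]$ from Construction~\ref{cons-S-pinv} as the cofibre of $\mu_{1-pt}\colon S[t]\to S[t]$ with $S[t]=\bigoplus_{n\in\N}S$: applying $P(-)\ot_p M$ (which preserves coproducts and cofibrations) exhibits $P(S[1/p])\ot_p M$ as the cofibre of $\mu_{1-pt}\colon\bigoplus_n M\to\bigoplus_n M$, and since $\pi_i$ preserves coproducts and sends cofibrations to long exact sequences in the monogenic category $\CM_p$, and $1-pt$ acts injectively on $\pi_i(M)[t]$, one reads off $\pi_i(P(S[1/p])\ot_p M)=\pi_i(M)[t]/(1-pt)=\pi_i(M)[1/p]$ directly. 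Your argument instead runs a uniqueness-of-homology-theories comparison: both $M\mapsto\pi_*(M)[1/p]$ and $M\mapsto\pi_*(P(S[1/p])\ot_p M)$ are coproduct-preserving exact functors, and the full subcategory of objects on which the natural transformation between them is an isomorphism is localising and contains $S_p$, hence is all of $\CM_p$. One small caveat: you should verify that the natural transformation itself, not merely the pair of abstract groups, is an isomorphism at $S_p$; this does hold because by Lemma~\ref{lem-pi-P} and functoriality, the map induced on $\pi_*$ at $S_p$ identifies with the canonical localisation $\Z_p\ot\pi_*(S)\to(\Z_p\ot\pi_*(S))[1/p]$. Both arguments rest on monogenicity of $\CM_p$; the paper's is more elementary and tied to the particular presentation of $S[1/p]$ already built in Section~\ref{sec-moore}, while yours is more structural and would adapt to Moore spectra for other rings without needing to know an explicit cellular model, at the cost of a little extra care in matching up generators.
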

\begin{proof}
 Recall from Construction~\ref{cons-S-pinv} that $S[1/p]$ can be
 constructed as the cofibre of a map $\mu_{1-pt}\:S[t]\to S[t]$, where
 $S[t]=\bigoplus_{n\in\N}S$.  The functor $X\mapsto P(X)\ot_pM$
 preserves coproducts and cofibrations, so $P(S[1/p])\ot_pM$ is the
 cofibre of $\mu_{1-pt}\:M[t]\to M[t]$, where
 $M[t]=\bigoplus_{n\in\N}M$.  As $\CM_p$ is monogenic we see that the
 functor $\pi_i\:\CM_p\to\Ab$ also preserves coproducts and sends
 cofibrations to exact sequences.  It follows that
 $\pi_i(P(S[1/p])\ot_pM)=\pi_i(M)[1/p]$ as claimed.
\end{proof}

\begin{corollary}\label{cor-bousfield-hL-module}\leavevmode
 \begin{itemize}
  \item[(a)] If $M$ is $\{p\}$-local and $N$ is $p$-complete then
   $\CM_p(M,N)=0$ 
  \item[(b)] Conversely, if $M$ is such that $\CM_p(M,N)=0$ for all
   $p$-complete $N$, then $M$ is $\{p\}$-local.
  \item[(c)] Similarly, if $N$ is such that $\CM_p(M,N)=0$ for all
   $\{p\}$-local $M$, then $N$ is $p$-complete.
  \item[(d)] If $M\in\CM_p$ and $N\in\hCM_p$ then the map
   $\tht^*\:[\hL_pM,N]\to\CM_p(M,N)$ is an isomorphism.
  \item[(e)] Thus, $\hCM_p$ is the localisation of $\CM_p$ with
   respect to $P(S/p)$ (in the sense of Bousfield).  The
   corresponding localisation functor is $\hL_p$, and the following
   acyclisation functor is $\hC_p$.
 \end{itemize}
\end{corollary}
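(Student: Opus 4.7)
The strategy is to emulate the proof of Corollary~\ref{cor-bousfield-hL} --- the analogous Bousfield localisation result in $\CB$ --- with Proposition~\ref{prop-hL-module} playing the role of Proposition~\ref{prop-hL}, and the internal tensor--hom adjunction in $\CM_p$ replacing the one in $\CB$.

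For part~(a), I would use that $N \simeq \hL_pN = \uHom(P(S^{-1}/p^\infty), N)$ by Proposition~\ref{prop-hL-module}(c), together with the $(\ot_p, \uHom)$ adjunction in $\CM_p$, to write $\CM_p(M,N) \simeq \CM_p(M \ot_p P(S^{-1}/p^\infty), N)$. Then, since $M$ is $\{p\}$-local, Lemma~\ref{lem-M-pinv} gives $M \simeq M \ot_p P(S[1/p])$, so
\[ M \ot_p P(S^{-1}/p^\infty) \simeq M \ot_p P(S[1/p]) \ot_p P(S^{-1}/p^\infty) \simeq M \ot_p P(S[1/p] \ot S^{-1}/p^\infty) = 0, \]
the last identity using that $P$ is strongly symmetric monoidal and that $S[1/p] \ot S^{-1}/p^\infty = 0$ in $\CB$ (already noted in the proof of Corollary~\ref{cor-bousfield-hL}(a)).

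Parts~(b) and~(c) follow from the usual retract argument, carried out inside $\CM_p$. For~(b), I would apply the hypothesis to $N = \hL_pM$, which is $p$-complete by Proposition~\ref{prop-hL-module}(a); then $\tht\:M \to \hL_pM$ is zero, so in the cofibre sequence $\hC_pM \xra{\xi} M \xra{\tht} \hL_pM$ the map $\xi$ is forced to be a split epimorphism, exhibiting $M$ as a retract of the $\{p\}$-local object $\hC_pM$, and $\{p\}$-locality is visibly preserved under retracts. Part~(c) is dual: take $M = \hC_pN$ to conclude that $N$ is a retract of $\hL_pN$. For part~(d), apply $\CM_p(-,N)$ to the same cofibre sequence to produce the exact row
\[ \CM_p(\Sg\hC_pM,N) \to \CM_p(\hL_pM,N) \xra{\tht^*} \CM_p(M,N) \to \CM_p(\hC_pM,N), \]
and invoke~(a) to kill the two outer terms, so $\tht^*$ is an isomorphism.

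Finally, part~(e) follows by observing (Remark~\ref{rem-moore-module}) that $M \ot_p P(S/p) = M/p$, so $M$ is $P(S/p)$-acyclic in Bousfield's sense iff $M/p = 0$ iff $M$ is $\{p\}$-local; combined with~(a)--(d), this verifies the Bousfield localisation axioms and identifies $\hL_p$ and $\hC_p$ as the localisation and acyclisation functors. I do not anticipate any real obstacle here: the argument is almost entirely formal, and the only point requiring a little care is confirming that $P(S[1/p]) \ot_p P(S^{-1}/p^\infty) = 0$, which is immediate from the strong monoidality of $P$ combined with the corresponding vanishing in $\CB$.
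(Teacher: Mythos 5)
Your proposal is correct and follows exactly the route the paper has in mind: the paper's proof just says to imitate Corollary~\ref{cor-bousfield-hL} using Lemma~\ref{lem-M-pinv} in the first step, and you have spelled out precisely that argument, with $N\simeq\uHom(P(S^{-1}/p^\infty),N)$, the internal adjunction, strong monoidality of $P$, and the retract/cofibre-sequence arguments for (b)--(e). No gaps.
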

\begin{proof}\leavevmode
 This can be proved in the same way as
 Corollary~\ref{cor-bousfield-hL}, using Lemma~\ref{lem-M-pinv} in the
 first step.
\end{proof}

\begin{proposition}\label{prop-complete-equiv}
 The functors $\hP$ and $U$ restrict to give an adjoint equivalence
 between $\hCB_p$ and $\hCM_p$.
\end{proposition}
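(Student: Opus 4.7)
The plan is to apply Lemma~\ref{lem-adjoint-equiv} to the adjunction $\hP\dashv U$ after suitably restricting source and target.  First I would verify that the functors land where claimed: $\hP=\hL_p\circ P$ takes values in $\hCM_p$ by Proposition~\ref{prop-hL-module}(a), and for $N\in\hCM_p$ the spectrum $U(N)$ has $\pi_i(U(N))=\pi_i(N)$, which is $\Ext$-$p$-complete, so $U(N)\in\hCB_p$.

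Next I would establish the restricted adjunction.  For $X\in\hCB_p$ and $N\in\hCM_p$, Corollary~\ref{cor-bousfield-hL-module}(d) applied to the $p$-complete object $N$, together with the original adjunction $P\dashv U$, gives
\[ \hCM_p(\hP(X),N)=\CM_p(\hL_pP(X),N)\simeq\CM_p(P(X),N)\simeq\CB(X,U(N))=\hCB_p(X,U(N)), \]
and these isomorphisms are natural.  The unit of the restricted adjunction at $X\in\hCB_p$ is the composite
\[ \eta_X\:X\to UP(X)=S_p\ot X\to\hL_p(S_p\ot X)=U\hP(X), \]
where the last identification uses $U\hL_p\simeq\hL_pU$ from Example~\ref{eg-enriched-adjoint}.

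The substantive step is to show $\eta_X$ is an equivalence when $X$ is $p$-complete.  Let $C=\cof(S\to S_p)$; the cofibration $\hC_pS\to S\to S_p$ identifies $C$ with $\Sg\hC_pS$, which is $\{p\}$-local by Proposition~\ref{prop-hL}(b).  Tensoring with $X$ gives a cofibration $X\to S_p\ot X\to C\ot X$ whose third term is still $\{p\}$-local, since $p$ acts invertibly on $C$ and therefore on $C\ot X$.  Proposition~\ref{prop-hL}(d) then yields $\hL_p(C\ot X)=0$, so applying $\hL_p$ to the cofibration gives an equivalence $\hL_pX\xra{\simeq}\hL_p(S_p\ot X)$.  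Since $X$ is already $p$-complete we have $X\simeq\hL_pX$, and naturality of the completion map identifies this composite with $\eta_X$, which is therefore an equivalence.

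Finally, $U$ reflects isomorphisms on all of $\CM_p$ by Remark~\ref{rem-conservative-complete}, so Lemma~\ref{lem-adjoint-equiv} upgrades the restricted adjunction to an adjoint equivalence.  The only genuinely non-formal input is the observation that $\cof(S\to S_p)$ is $\{p\}$-local, which is what makes $S_p\ot X$ become equivalent to $X$ after applying $\hL_p$; everything else is either a naturality argument or a direct invocation of earlier results.
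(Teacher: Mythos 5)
Your proof is correct, and it follows the same overall skeleton as the paper: verify that the functors land in the right subcategories, use Corollary~\ref{cor-bousfield-hL-module}(d) to establish the restricted adjunction, show the unit is an equivalence, and finish with Lemma~\ref{lem-adjoint-equiv} plus the fact that $U$ reflects isomorphisms. The only place you diverge is in the key step of showing that $\hL_p X\to\hL_p(S_p\ot X)$ is an equivalence for $X\in\hCB_p$. The paper reduces mod $p$: it invokes Lemma~\ref{lem-same-mod-p} (which identifies $Y_p/p$ with $Y/p$) and Lemma~\ref{lem-X-mod-p} (equivalences of $p$-complete spectra are detected mod $p$), so that the map is an equivalence because it is the identity on $X/p$. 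You instead observe directly that $\cof(S\to S_p)\simeq\Sg\hC_pS$ is $\{p\}$-local by Proposition~\ref{prop-hL}(b), hence so is its tensor product with $X$, and this cofibre therefore dies under $\hL_p$ by Proposition~\ref{prop-hL}(d). These are two phrasings of the same underlying fact --- the cofibre of $X\to S_p\ot X$ is $S/p$-acyclic --- but yours is arguably a little cleaner, since it bypasses the ``detect equivalences mod $p$'' lemma and works at the level of the cofibre rather than the map. Either is a perfectly good route; yours has the minor advantage of making the geometric reason (the cofibre is $\{p\}$-local) more visible.
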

\begin{proof}
 As $M$ is defined to be $p$-complete if and only if $U(M)$ is
 $p$-complete, we see that $U$ restricts to give a functor
 $U_1\:\hCM_p\to\hCB_p$.  As the object $\hP(X)=\hL_p(P(X))$ is always
 $p$-complete by Proposition~\ref{prop-hL-module}(a), we see that
 $\hP$ restricts to give a functor $\hP_1\:\hCB_p\to\hCM_p$.  For
 $X\in\hCB_p$ and $M\in\hCM_p$ we can use
 Corollary~\ref{cor-bousfield-hL-module}(d) to get
 \[ \hCM_p(\hP(X),M) = 
    \hCM_p(\hL_p(P(X)),M) = 
    \CM_p(P(X),M) = \CB(X,UM).
 \]
 This shows that $\hP_1$ is left adjoint to $U_1$.  For $X\in\hCB_p$
 we can use Example~\ref{eg-enriched-adjoint} to get 
 \[ U(\hP(X)) = U(\hL_p(P(X))) = \hL_p(U(P(X))) = 
     \hL_p(S_p\ot X).
 \]
 Using Lemmas~\ref{lem-same-mod-p} and~\ref{lem-X-mod-p} we see that
 the natural map $X\to S_p\ot X$ becomes an isomorphism after
 applying $\hL_p$, but $X\in\hCB_p$ so $\hL_p(X)=X$.  We thus have
 $U(\hP(X))\simeq X$, and after identifying the maps that were
 implicitly used we find that the unit $\eta\:X\to U(\hP(X))$ is an
 isomorphism.  The functor $U_1$ reflects isomorphisms, so
 Lemma~\ref{lem-adjoint-equiv} completes the proof. 
\end{proof}

\begin{definition}
 We write $\hCB_p^d$ for the subcategory of strongly dualisable
 objects in $\hCB_p$ (which has various other descriptions as in
 Proposition~\ref{prop-finite-complete}, and is equivalent to the
 category $\CF'_p$ as in Corollary~\ref{cor-F-prime}).  We write
 $\hCM_p^d$ for the subcategory of strongly dualisable objects in
 $\CM_p$.  
\end{definition}

\begin{remark}
 As $\CM_p$ is assumed to be monogenic, we can
 use~\cite[Theorem 2.1.3(d)]{hopast:ash} to see that $\CM_p^d$ is also
 the subcategory of compact objects, and is the thick subcategory
 generated by the unit object $S_p=P(S)$.
\end{remark}

\begin{proposition}\label{prop-small-modules}
 The category $\CM^d_p$ is contained in $\hCM_p$, and 
 the functors $\hP$ and $U$ restrict further to give an adjoint
 equivalence between $\hCB^d_p$ and $\CM^d_p$.  Thus, $\CM_p^d$ is
 also equivalent to the category $\CF'_p$ in in
 Corollary~\ref{cor-F-prime}.
\end{proposition}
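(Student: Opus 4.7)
The plan is to build on Proposition~\ref{prop-complete-equiv}, which provides an adjoint equivalence $\hP_1\:\hCB_p\to\hCM_p$ with inverse $U_1$, and then identify on each side the strongly dualisable subcategory with the thick subcategory generated by the monoidal unit.

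First I would check that the unit $S_p=P(S)$ of $\CM_p$ is itself $p$-complete. By Lemma~\ref{lem-pi-P} we have $\pi_i(P(S))=\Z_p\ot\pi_i(S)$, which is $0$ for $i<0$, is $\Z_p$ for $i=0$, and for $i>0$ is a finite abelian $p$-group (the prime-to-$p$ part of $\pi_i(S)$ vanishes after tensoring with $\Z_p$). All three kinds of group are $\Ext$-$p$-complete, the last by Lemma~\ref{lem-uniform-torsion}, so $S_p\in\hCM_p$. Next I would verify that $\hCM_p$ is a thick subcategory of $\CM_p$: for any cofibre sequence $M\to N\to Q$ with two of the three vertices $p$-complete, the long exact sequence on $\pi_*$ exhibits $\pi_*$ of the third as an extension of subquotients of $\Ext$-$p$-complete groups, and~\cite[Proposition 12.22]{st:ata} shows that the class of $\Ext$-$p$-complete groups is closed under subgroups, quotients, and extensions; closure under retracts is clear. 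Since $\CM_p^d$ is the thick subcategory of $\CM_p$ generated by $S_p$, it follows that $\CM_p^d\sse\hCM_p$.

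The equivalence $\hP_1,U_1$ sends monoidal unit to monoidal unit: the unit of $\hCM_p$ is $P(S)$, and the strict module category axiom gives $U(P(S))=S_p\ot S=S_p$, which is the unit of $\hCB_p$. Because $\hP_1$ and $U_1$ are mutually quasi-inverse exact functors, they induce a bijection between thick subcategories; in particular $\hP_1$ carries the thick subcategory of $\hCB_p$ generated by $S_p$ bijectively onto the thick subcategory of $\hCM_p$ generated by $S_p$. By Proposition~\ref{prop-finite-complete} the former is exactly $\hCB_p^d$. On the other side, triangles in $\hCM_p$ are triangles in $\CM_p$ between $p$-complete objects, so $\CM_p^d\sse\hCM_p$ is a thick subcategory of $\hCM_p$ containing $S_p$; minimality forces this to coincide with the thick subcategory of $\hCM_p$ generated by $S_p$. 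Thus $\hP_1$ and $U_1$ restrict to an adjoint equivalence $\hCB_p^d\simeq\CM_p^d$, and composing with Corollary~\ref{cor-F-prime} gives $\CM_p^d\simeq\CF'_p$.

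I do not anticipate a serious obstacle; the only point calling for care is matching the two incarnations of ``$S_p$'' (namely $\hL_pS\in\hCB_p$ and $P(S)\in\hCM_p$) under the equivalence, which is immediate from the identity $UPX\simeq S_p\ot X$ built into the definition of a strict module category.
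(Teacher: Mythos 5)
Your overall strategy -- leveraging Proposition~\ref{prop-complete-equiv} and matching thick subcategories generated by the units on each side -- is essentially the paper's, and the latter two-thirds of your argument is correct and well organised. The flaw is in the first step, where you assert that $\Ext$-$p$-complete groups are closed under ``subgroups, quotients, and extensions.'' This is false: $\Z$ is a subgroup of $\Z_p$ and $\Z_p/\Z$ is a quotient of $\Z_p$, and neither is $\Ext$-$p$-complete (the quotient is a nonzero divisible group, so $\Hom(\Z[1/p],-)$ does not vanish on it). What \emph{is} true, and what you actually need, is that kernels and cokernels of maps \emph{between} $\Ext$-$p$-complete groups are again $\Ext$-$p$-complete, and that the class is closed under extensions and retracts. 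This is exactly what the long exact sequence produces: if $\pi_*(M)$ and $\pi_*(Q)$ are $\Ext$-$p$-complete, then $\pi_i(N)$ sits in a short exact sequence between $\cok(\partial_{i+1}\colon\pi_{i+1}Q\to\pi_iM)$ and $\ker(\partial_i\colon\pi_iQ\to\pi_{i-1}M)$, each of which is a (co)kernel of a map between $\Ext$-$p$-complete groups. With the closure claim corrected in this way, your thickness argument for $\hCM_p$ goes through.

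For comparison, the paper sidesteps the closure question entirely: it defines $\CC=\{M\in\CM_p\mid U(M)\in\hCB_p^d\}$, which is thick because $U$ is exact and $\hCB_p^d$ is thick in $\CB$ (no algebraic input needed), checks $P(S)\in\CC$, and deduces both $U(\CM_p^d)\sse\hCB_p^d$ and $\CM_p^d\sse\hCM_p$ in one stroke. It also argues that $\hP(S_p)=P(S)$ by noting the map $P(S)\to\hP(S_p)$ becomes an isomorphism after applying $U$, which reflects isomorphisms, rather than computing homotopy groups directly. Both routes are fine; the paper's is somewhat more economical and avoids the need for precise statements about closure properties of $\Ext$-$p$-complete groups.
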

\begin{proof}
 Put $\CC=\{M\in\CM_p\st U(M)\in\hCB_p^d\}$.  As $U(P(S))=S_p$ we see
 that $P(S)\in\CC$.  As $\CC$ is thick and $\CM_p^d$ is the thick
 subcategory generated by $P(S)$ we see that $\CM_p^d\sse\CC$, so
 $U(\CM_p^d)\sse\hCB_p^d$.  As $M$ is complete whenever $U(M)$ is, we
 see that $\CM_p^d\sse\hCM_p$.

 Similarly, we know from Proposition~\ref{prop-finite-complete} that
 $\hCB_p^d$ is the thick subcategory generated by $S_p$.  We have seen
 that $U(\hP(S_p))=S_p$, so the natural map $P(S)\to\hP(S_p)$ becomes
 an isomorphism after applying $U$, but $U$ reflects isomorphisms, so
 $\hP(S_p)$ is the unit object $P(S)$, and in particular lies in
 $\CM_p^d$.  By a thick subcategory argument, we conclude that
 $\hP(\hCB_p^d)\sse\CM_p^d$.  From
 Proposition~\ref{prop-complete-equiv} we know that the unit
 $X\to U(\hP(X))$ is an isomorphism when $X\in\hCB_p$, and in
 particular when $X\in\hCB^d_p$.  We also know that the counit
 $\hP(U(M))\to M$ is an isomorphism whenever $M\in\hCM_p$, and in
 particular when $M\in\CM_p^d$.  We thus have an adjoint equivalence
 as claimed.
\end{proof}

We now want to show that $\CM_p$ satisfies homological Brown
representability.  
\begin{definition}\label{defn-homology}
 Recall that 
 \begin{itemize}
  \item[(a)] A covariant or contravariant functor from a triangulated
   category to an abelian category is said to be \emph{exact} if it
   converts cofibre sequences to exact sequences.
  \item[(b)] A \emph{homology functor} on $\CM_p$ is an exact functor
   $F\:\CM_p\to\Ab$ that preserves all coproducts.
  \item[(c)] For any $E\in\CM_p$ we have a homology functor
   $H_E\:\CM_p\to\Ab$ given by $H_E(X)=\pi_0(E\ot_pX)$.
  \item[(d)] We say that a homology functor $F$ is
   \emph{representable} if it is isomorphic to $H_E$ for some $E$.
 \end{itemize}
\end{definition}

\begin{theorem}\label{thm-brown}
 Every homology functor $\CM_p\to\Ab$ is representable, and every
 natural transformation $H_D\to H_E$ is induced by a morphism
 $D\to E$.  In other words, $\CM_p$ is a Brown category as
 in~\cite[Definition 4.1.4]{hopast:ash}.
\end{theorem}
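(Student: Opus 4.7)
The plan is to follow Neeman's strategy from~\cite{ne:tba}, adapted to $\CM_p$. The structural inputs are: $\CM_p$ is monogenic with compact generator $S_p=P(S)$; every compact object of $\CM_p$ is strongly dualisable (Proposition~\ref{prop-small-modules}), so Spanier--Whitehead duality is available within $\CM_p^d$; and by Corollary~\ref{cor-F-prime}, $\CM_p^d$ is equivalent to $\CF'_p$, which has a countable skeleton because finite spectra form a set up to equivalence and their $\Z_p$-valued $\Hom$ groups are countable. This essential smallness is what lets the inductive construction below terminate at the ordinal $\omega$; being merely monogenic gives only cohomological Brown representability.

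Given a homology functor $F$, I build $E\in\CM_p$ together with $\eta\:H_E\to F$ that is an isomorphism on every compact object, by an iterative construction $E_0\to E_1\to\dotsb$. Set $E_0=\bigoplus_{(C,x)}C$, where $(C,x)$ ranges over pairs with $C$ in a chosen countable skeleton of $\CM_p^d$ and $x\in F(C)$, and define $\eta_0$ using that $F$ preserves coproducts, sending the distinguished element of $\pi_0(C\ot_p C)$ attached to the summand $(C,x)$ to $x$; this makes $\eta_0$ surjective on every compact object. Inductively, given $\eta_n$, for each compact $C$ and each $y\in\ker(\eta_n(C))$ the duality identification $\pi_0(E_n\ot_p C)=\CM_p(DC,E_n)$ produces $\widetilde{y}\:DC\to E_n$, and we set
\[ E_{n+1}=\cof\Bigl(\textstyle\bigoplus_{(C,y)}DC\xra{\sum\widetilde{y}}E_n\Bigr). \]
Exactness of $F$ against this cofibre sequence, combined with $\eta_n(y)=0$, allows $\eta_n$ to extend to $\eta_{n+1}$. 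Let $E$ be the homotopy colimit. For compact $C$ the functor $\pi_0(-\ot_p C)$ commutes with sequential homotopy colimits, so $H_E(C)\to F(C)$ is surjective from stage~$0$ and injective since each kernel element of $\eta_n(C)$ dies in $\eta_{n+1}(C)$ by design.

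To pass from compact $C$ to arbitrary $X\in\CM_p$, I would express $X$ as a sequential homotopy colimit of compact objects via a CW-like construction parallel to Proposition~\ref{prop-CW-complete} but inside $\CM_p$, using that $S_p$ is simultaneously the monoidal unit and a compact generator together with Construction~\ref{cons-flip}. Both $H_E$ and $F$ preserve coproducts and are exact, hence preserve such colimits, so agreement on $\CM_p^d$ extends to all of $\CM_p$. For the second claim, given $\al\:H_D\to H_E$, I express $D$ as $\operatorname{hocolim}_n D_n$ with $D_n\in\CM_p^d$ and observe that the duality identification $H_D(DD_n)\simeq\CM_p(D_n,D)$ lets one send the tautological structure map $D_n\to D$ through $\al(DD_n)$ to obtain a morphism $f_n\:D_n\to E$; naturality of $\al$ makes the family $(f_n)$ compatible under the structure maps, producing the desired $f\:D\to E$ in the colimit.

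I expect the main obstacle to be the bookkeeping in the inductive step of the first part: verifying that $\eta_n$ genuinely extends to $\eta_{n+1}$ (which pins down the choice of lift against the cofibre sequence and uses exactness of $F$ essentially) and controlling cardinalities so that the induction terminates at~$\omega$ rather than a larger ordinal. The latter is precisely where the equivalence of $\CM_p^d$ with $\CF'_p$ plays its crucial role; without essential smallness of $\CM_p^d$ the argument would not close.
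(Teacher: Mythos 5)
Your approach is the naive Adams iterate, which is genuinely different from what the paper does, and it has a gap at exactly the step you flag as ``bookkeeping'': I do not believe that $\eta_n$ extends to $\eta_{n+1}$ for the reason you give. From the cofibre sequence $W=\bigoplus_{(C,y)}DC\to E_n\to E_{n+1}$ one gets an exact sequence of functors $H_W\to H_{E_n}\to H_{E_{n+1}}$, and the vanishing $\eta_n(y)=0$ does show that $\eta_n$ kills the image of $H_W$, hence factors through $\cok(H_W\to H_{E_n})$. But $H_{E_{n+1}}$ is strictly bigger than this cokernel (the quotient is a subfunctor of $H_{\Sg W}$), and there is no reason an extension of $\bar\eta_n$ across that monomorphism should exist: $F$ being exact does not make it injective in the category of additive functors on $(\CM_p^d)^\op$, and the requisite $\Ext^1$ group has no reason to vanish. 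This is not a minor technicality --- it is precisely the obstruction Neeman's paper is designed to circumvent, and without resolving it the iterate does not produce a well-defined $\eta\:H_E\to F$. (There is a cumulative error too: even if each $\eta_{n+1}$ existed, the argument that $D$ can be expressed as an $\omega$-indexed homotopy colimit of compacts, which you invoke in both the first and second parts, is not established; the cell filtrations furnished by monogenicity are ordinal-indexed and their stages need not be compact.)

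Separately, your statement that the $\Z_p$-valued $\Hom$ groups of $\CF'_p$ are countable is false: $\CF'_p(X,Y)=\CF(X,Y)\ot\Z_p$ is a finitely generated $\Z_p$-module and is typically uncountable (e.g.\ $\Z_p$ itself). This matters because Neeman's hypothesis is that morphism \emph{sets} are countable; the paper explicitly notes they are not here and substitutes finite generation over $\Z_p$. The paper's actual proof does not run the iterate at all. It converts $F$ to an exact functor on $(\CM_p^d)^\op$, and then (Proposition~\ref{prop-SES}) constructs a length-one free resolution $0\to Q\to P\to F\to 0$ in which the first map is a \emph{monomorphism}; Lemma~\ref{lem-cok-rep} then realizes $F$ as $H^T$ where $T$ is the cofibre of the map $Q'\to P'$ realizing $Q\to P$, and the same lemma gives the surjectivity needed for the second half of the theorem. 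The delicate step is ensuring that $Q\to P$ is injective; this is handled by the well-filtered subcategory machinery of Definition~\ref{defn-well-filtered} and Proposition~\ref{prop-well-filtered}, and it is exactly there --- step (c) of that proposition, where a submodule of a finitely generated $\Z_p$-module $[W,X]$ is observed to be finitely generated --- that the countability hypothesis is replaced by the $\Z_p$ finiteness available in $\CM_p^d$.
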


Our proof, which will be given after some preparatory results, is
closely related to the approach of Neeman~\cite{ne:tba}.  He considers
a triangulated category $\CS$ with only countably many morphisms, and
proves a representability result in that context.  In our case the
relevant category is $\CM_p^d$.  Every object here is isomorphic to
$FX$ for some finite spectrum $X$, which implies that there are
only countably many isomorphism classes.  Morphism sets may be
uncountable, but they are always finitely generated modules over
$\Z_p$.  We will show that this is enough to replace one step in
Neeman's argument, and the other steps can be used without further
changes, although we will present them in a somewhat different way.

Note that homology functors on $\CM_p$ are equivalent to exact
functors $\CM_p^d\to\Ab$, by~\cite[Corollary 2.3.11]{hopast:ash}.
Moreover, the duality functor $D(X)=\uHom(X,S_p)$ gives an equivalence
$(\CM_p^d)^\op\to\CM_p^d$.  Thus, everything can be reinterpreted in
terms of exact functors $(\CM_p^d)^\op\to\Ab$.

\begin{definition}\leavevmode 
 \begin{itemize}
  \item[(a)] We write $\CA$ for the category of additive functors
   $(\CM_p^d)^\op\to\Ab$.
  \item[(b)] For each $U\in\CM_p$ we define $H^U\in\CA$
   by $H^U(X)=[X,U]=\CM_p(X,U)=H_U(D(X))$.
  \item[(c)] We say that an object $P\in\CM_p$ is \emph{free} if it
   can be expressed as a coproduct $\bigoplus_{i\in I}U_i$ with
   $U_i\in\CM_p^d$, and note that in this case we have
   $H^P\simeq\bigoplus_{i\in I}H^{U_i}$.  We say that a functor
   $F\in\CA$ is \emph{free} if it is isomorphic to $H^P$ for some free
   object $P$.  We write $\CP'$ for the full subcategory of free
   objects in $\CM_p$, and $\CP$ for the category of free functors.
  \item[(d)] We write $\CH$ for the full subcategory of exact functors
   in $\CA$.
  \item[(e)] We write $\CP_\om$ for the full
   subcategory of $\CP$ consisting of objects that can be expressed as
   $\bigoplus_{i\in I}H^{U_i}$ for some countable set $I$.
  \item[(f)] We say that a functor $F\in\CA$ is countably generated if it
   admits an epimorphism from some object in $\CP_\om$, and we write
   $\CA_\om$ for the category of countably generated functors.
 \end{itemize}
\end{definition}

\begin{lemma}\label{lem-free-functors}
 If $P\in\CP'\sse\CM_p$ and $X\in\CM_p$ then the natural map
 $\CM_p(P,X)\to\CA(H^P,H^X)$ is an isomorphism.  Thus the functor
 $P\mapsto H^P$ gives an equivalence $\CP'\to\CP$. 
\end{lemma}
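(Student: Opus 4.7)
The plan is to reduce the statement to Yoneda's lemma componentwise. Write $P=\bigoplus_{i\in I}U_i$ with $U_i\in\CM_p^d$, so that the universal property of coproducts in $\CM_p$ gives
\[ \CM_p(P,X) \;=\; \prod_{i\in I}\CM_p(U_i,X). \]
On the functor side, since $H^{(-)}$ is additive and $\CP'$ is the class of arbitrary coproducts of dualisable objects, the direct-sum decomposition $H^P\simeq\bigoplus_{i\in I}H^{U_i}$ in $\CA$ holds by construction, and the universal property of coproducts in $\CA$ gives
\[ \CA(H^P,H^X) \;=\; \prod_{i\in I}\CA(H^{U_i},H^X). \]

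Now I would apply the classical Yoneda lemma in the functor category $\CA$: each $U_i$ lies in $\CM_p^d$, which is the domain of the functors in $\CA$, and $H^{U_i}=\CM_p^d(-,U_i)$ is the representable functor of $U_i$, so
\[ \CA(H^{U_i},H^X) \;=\; H^X(U_i) \;=\; \CM_p(U_i,X). \]
Combining these identifications, both $\CM_p(P,X)$ and $\CA(H^P,H^X)$ are canonically identified with $\prod_{i\in I}\CM_p(U_i,X)$. A diagram chase (tracking the definition of the natural map, which sends $f\:P\to X$ to the transformation $g\mapsto f\circ g$, and restricting along each inclusion $U_i\hookrightarrow P$) shows that the natural map of the lemma agrees with this identification, so it is an isomorphism.

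For the second assertion, the assignment $P\mapsto H^P$ is essentially surjective onto $\CP$ by the very definition of the class of free functors. The first part of the lemma shows that it is fully faithful. Hence it is an equivalence $\CP'\to\CP$.

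The only delicate point is the legitimacy of applying Yoneda: this requires $U_i\in\CM_p^d$ (the domain of functors in $\CA$), which is part of the definition of a free object. The compatibility of the natural map with the product decompositions on both sides is routine and follows directly from unwinding definitions, so no substantial obstacle arises.
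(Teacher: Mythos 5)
Your proof is correct and follows essentially the same route as the paper's: decompose $P=\bigoplus_{i\in I}U_i$, identify both $\CM_p(P,X)$ and $\CA(H^P,H^X)$ with $\prod_i\CM_p(U_i,X)$ via the universal property of coproducts and the Yoneda Lemma applied to $H^{U_i}$, and observe the natural map is compatible with these identifications. The second assertion (fully faithful plus essentially surjective by definition of $\CP$) is also handled the same way.
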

\begin{proof}
 We can express $P$ as $\bigoplus_{i\in I}U_i$ with $U_i\in\CM_p^d$,
 and then $\CM_p(P,X)=\prod_i\CM_p(U_i,X)$ and
 $H^P=\bigoplus_iH^{U_i}$ so $\CA(H^P,H^X)=\prod_i\CA(H^{U_i},H^X)$.
 Moreover, the Yoneda Lemma gives
 $\CA(H^{U_i},H^X)=H^X(U_i)=\CM_p(U_i,X)$.  The claim is clear from
 this. 
\end{proof}

\begin{lemma}\label{lem-two-of-three}
 Given a short exact sequence $F\mra G\era H$ in $\CA$, if two of the
 functors $F$, $G$ and $H$ lie in $\CH$, then so does the third.
\end{lemma}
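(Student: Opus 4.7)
The plan is a routine ``short exact sequence of complexes'' argument, reducing the statement to the long exact sequence in homology. First I observe that $\CA$ is an abelian category in which kernels, cokernels, and images are all computed pointwise; this is immediate because $\CA$ is the category of additive functors into the abelian category $\Ab$. Consequently, a sequence $F \mra G \era H$ in $\CA$ is short exact if and only if for every $W \in \CM_p^d$ the evaluation $0 \to F(W) \to G(W) \to H(W) \to 0$ is short exact in $\Ab$.

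Next, I fix an arbitrary cofibre triangle $X \to Y \to Z$ in $\CM_p^d$ and rotate it to obtain the doubly infinite sequence
\[ \cdots \to \Sg^{-1}Z \to X \to Y \to Z \to \Sg X \to \Sg Y \to \cdots, \]
any three consecutive terms of which again form a cofibre triangle. Applying a contravariant additive functor $F \in \CA$ produces a sequence of abelian groups with vanishing consecutive composites, i.e.\ a chain complex; denote the complexes obtained from $F$, $G$, $H$ by $C^F_\bullet$, $C^G_\bullet$, $C^H_\bullet$ respectively. Exactness of $F$ in the sense of Definition~\ref{defn-homology}(a) means that $F$ converts each three-term cofibre sequence into an exact sequence of abelian groups; combined with rotational invariance, this amounts to saying that $C^F_\bullet$ is acyclic for every cofibre triangle in $\CM_p^d$ (and likewise for $G$, $H$).

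The pointwise short exactness from the first step assembles into a short exact sequence of chain complexes $0 \to C^F_\bullet \to C^G_\bullet \to C^H_\bullet \to 0$. The associated long exact sequence in homology then yields the two-out-of-three property: if two of $C^F_\bullet$, $C^G_\bullet$, $C^H_\bullet$ are acyclic, the third is as well. As the triangle $X \to Y \to Z$ was arbitrary, the remaining functor indeed lies in $\CH$. There is no serious obstacle: the only non-automatic ingredient is the observation that short exact sequences in $\CA$ are computed pointwise, which is a formal consequence of the definition of $\CA$.
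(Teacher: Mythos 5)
Your proof is correct and follows the same route as the paper: extend a cofibre triangle to a doubly infinite sequence, apply the three functors to get a pointwise short exact sequence of chain complexes, and invoke the long exact sequence in homology. The only addition is your explicit (and correct) remark that exactness in $\CA$ is detected pointwise, which the paper leaves implicit.
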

\begin{proof}
 Given a cofibre sequence $X=(X_0\to X_1\to X_2\to\Sg X)$ we can
 extend the sequence infinitely in both directions in the usual way
 and then apply $F$ to get a chain complex which we call $F(X)$.  By
 definition, $F$ lies in $\CH$ iff $F(X)$ is acyclic for all $X$.  A
 short exact seequence of functors $F\mra G\era H$ gives a short exact
 sequence $F(X)\mra G(X)\era H(X)$ of chain complexes and thus a long
 exact sequence of homology groups, using which we see that if two of
 $F(X)$, $G(X)$ and $H(X)$ is acyclic, then the third is also.  The
 claim is clear from this.
\end{proof}

\begin{lemma}\label{lem-cok-rep}
 Suppose we have a short exact sequence $0\to Q\xra{d}P\xra{f}F\to 0$
 in $\CA$ with $P,Q\in\CP$.  Then we can choose an isomorphism
 $F\simeq H^T$ for some $T\in\CM_p$.  Moreover, for any $X$ the
 resulting map $\CM(T,X)\to\CA(F,H^X)$ is surjective.
\end{lemma}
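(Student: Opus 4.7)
The plan is to realize the algebraic presentation of $F$ geometrically as a cofibre sequence in $\CM_p$. By Lemma~\ref{lem-free-functors} I can choose free objects $P',Q'\in\CP'$ with $P\simeq H^{P'}$ and $Q\simeq H^{Q'}$, together with a unique morphism $d'\:Q'\to P'$ inducing $d=H^{d'}$. Set $T:=\cof(d')$, giving a cofibre sequence $Q'\xra{d'}P'\xra{f'}T\xra{g'}\Sg Q'$. Applying the homological functor $\CM_p(X,-)$ to this cofibre sequence for each $X\in\CM_p^d$ yields a long exact sequence
\[ H^{Q'}(X)\xra{H^{d'}(X)}H^{P'}(X)\xra{H^{f'}(X)}H^T(X)\xra{H^{g'}(X)}H^{\Sg Q'}(X)\xra{H^{\Sg d'}(X)}H^{\Sg P'}(X). \]
Exactness at $H^{P'}(X)$ yields a canonical pointwise injection $F=\cok(H^{d'})\hookrightarrow H^T$. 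To upgrade this to an isomorphism I must show $H^{f'}(X)$ is surjective, equivalently that $H^{\Sg d'}(X)$ is injective. Since $\Sg$ restricts to an auto-equivalence of $\CM_p^d$, we have $H^{\Sg d'}(X)=H^{d'}(\Sg^{-1}X)$, and the latter is injective because $H^{d'}=d$ is pointwise injective by the hypothesised exactness of $0\to Q\to P\to F\to 0$.

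For the second claim, let $\alpha\in\CA(F,H^X)$ be arbitrary. The composite $\alpha\circ f\:P\to H^X$ is, by Lemma~\ref{lem-free-functors}, of the form $H^{a'}$ for a unique $a'\:P'\to X$ in $\CM_p$. The equality $\alpha\circ f\circ d=0$ gives $H^{a'\circ d'}=0$, and applying Lemma~\ref{lem-free-functors} once more (to the morphism $a'\circ d'\in\CM_p(Q',X)$) forces $a'\circ d'=0$ in $\CM_p$. Hence $a'$ factors through the cofibre as $a'=t\circ f'$ for some $t\:T\to X$. Under the identification $F\simeq H^T$ constructed in the first paragraph, the projection $f\:P\to F$ corresponds to $H^{f'}$, so the equation $H^t\circ H^{f'}=H^{a'}=\alpha\circ H^{f'}$ combined with the pointwise surjectivity of $H^{f'}$ gives $\alpha=H^t$, exhibiting $\alpha$ in the image of $\CM_p(T,X)\to\CA(F,H^X)$.

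The main obstacle is establishing the isomorphism $F\simeq H^T$. The injection $F\hookrightarrow H^T$ is a formal consequence of the long exact sequence associated with the cofibre sequence, but surjectivity is not automatic: it requires the full strength of the hypothesis that $d$ is injective (not merely that the sequence is right exact), transferred through the auto-equivalence $\Sg$ of $\CM_p^d$. Once this isomorphism is in hand, the second claim is essentially an exercise in chasing the full faithfulness provided by Lemma~\ref{lem-free-functors}.
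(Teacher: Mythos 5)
Your proof is correct and follows essentially the same route as the paper: realise $d$ as $H^{d'}$ for $d'\colon Q'\to P'$ via Lemma~\ref{lem-free-functors}, take $T=\cof(d')$, and establish $F\simeq H^T$ and the surjectivity of $\CM_p(T,X)\to\CA(F,H^X)$ by chasing the long exact sequence of the cofibre triangle. You also helpfully spell out the one step the paper glosses over---that $H^{f'}(X)$ is surjective because $H^{\Sg d'}(X)=H^{d'}(\Sg^{-1}X)=d(\Sg^{-1}X)$ is injective by the hypothesised monomorphism $d$ and closure of $\CM_p^d$ under $\Sg^{\pm 1}$.
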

\begin{proof}
 By the definition of $\CP$ we have $P=H^U$ and $Q=H^V$ for some
 $U,V\in\CP'$.  Lemma~\ref{lem-free-functors} tells us that there is
 a unique $d'\:V\to U$ that induces $d\:H^V\to H^U$.  We can
 now choose a cofibre $U\xra{f'}T$ for $d'$, and we find that $f'$
 induces an isomorphism $\cok(f)\to H^T$, so $F\simeq H^T$ as
 required.  Given another object $X\in\CM_p$ we can chase the diagram 
 \begin{center}
  \begin{tikzcd}
   \CM_p(\Sg V,X) \arrow[r] &
   \CM_p(T,X) \arrow[d] \arrow[r] & 
   \CM_p(U,X) \arrow[d,"\simeq"] \arrow[r] &
   \CM_p(V,X) \arrow[d,"\simeq"] \\ &
   \CA(F,H^X) \arrow[r,rightarrowtail] & 
   \CA(H^U,H^X) \arrow[r] &
   \CA(H^V,H^X)
  \end{tikzcd}
 \end{center}
 to see that the map $\CM_p(T,X)\to\CA(F,H^X)$ is surjective.
\end{proof}

\begin{definition}\label{defn-skel}
 Proposition~\ref{prop-small-modules} tells us that every object in
 $\CM_p^d$ is isomorphic to $P(X)$ for some finite spectrum $X$, and
 it follows that $\CM_p^d$ has only countably many isomorphism
 classes.  We choose a list $T_0,T_1,\dotsc$ containing precisely one
 representative of each isomorphism class, starting with $T_0=0$ and
 $T_1=S_p$.  We write $\CS$ for the full subcategory of $\CM_p^d$ with
 objects $\{T_i\st i\in\N\}$, so the inclusion $\CS\to\CM_p^d$ is an
 equivalence.  
\end{definition}

\begin{definition}\label{defn-well-filtered}
 Consider a functor $H\in\CH$.  We define $\el(H)$ to be the category
 of pairs $(X,x)$ with $X\in\CS$ and $x\in H(X)$.  The morphisms from
 $(X,x)$ to $(Y,y)$ are the maps $f\in\CS(X,Y)$ with $f^*(y)=x$.
 Recall that a subcategory $\CX\sse\el(H)$ is said to be
 \emph{filtered} if it has properties~(a) to~(c) below; we will say
 that it is \emph{well-filtered} if it also has property~(d).
 \begin{itemize}
  \item[(a)] $\CX$ is not empty
  \item[(b)] For all $P_0,P_1\in\CX$ there is an object $P_2\in\CX$
   and morphisms $P_0\xra{}P_2\xla{}P_1$.
  \item[(c)] For all pairs of parallel morphisms $f,g\:P_0\to P_1$ in
   $\CX$ there is a morphism $h\:P_1\to P_2$ in $\CX$ with $hf=hg$
  \item[(d)] For all $(X,x)\in\CX$ and $W\in\CS$ and
   $f\:W\to X$ with $f^*(x)=0$, there exists $g\:(X,x)\to (Y,y)$ in
   $\CX$ with $gf=0$.
 \end{itemize}
\end{definition}

\begin{proposition}\label{prop-well-filtered}
 Any countable subcategory of $\el(H)$ is contained in a countable
 well-filtered subcategory.
\end{proposition}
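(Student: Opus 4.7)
The plan is to enlarge the given countable subcategory $\CX_0 \subseteq \el(H)$ inductively, producing a countable chain $\CX_0 \subseteq \CX_1 \subseteq \CX_2 \subseteq \cdots$ of countable subcategories, and then set $\CX = \bigcup_n \CX_n$. The aim is to arrange that each obligation of type (b), (c), or (d) triggered by data in $\CX_n$ is resolved in $\CX_{n+1}$, so that the limit is well-filtered; condition (a) is automatic once any $\CX_n$ is nonempty. For (b), given $(X_0, x_0)$ and $(X_1, x_1)$ in $\CX_n$, I would adjoin the object $(X_0 \oplus X_1, (x_0, x_1))$, which makes sense because $H$ is additive, together with the canonical inclusions. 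For (c), given parallel morphisms $f, g: (X, x) \to (Y, y)$, I would take a cofiber $h: Y \to Z$ of $f - g: X \to Y$; then $h \circ f = h \circ g$, and the exact sequence $H(Z) \to H(Y) \to H(X)$ produces $z \in H(Z)$ with $h^*(z) = y$, so $h$ promotes to a morphism in $\el(H)$. Both constructions stay inside $\CM_p^d$ since it is thick and closed under finite coproducts, and I then pass to the chosen representative in $\CS$.

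The main obstacle is condition (d), since for a fixed $(X, x)$ and $W \in \CS$ the set $K_{W,X,x} = \{f \in \CS(W, X) : f^*(x) = 0\}$ is a priori uncountable, and I cannot afford to adjoin a separate killing morphism for each $f$. The rescue is Corollary~\ref{cor-F-prime}: the morphism groups in $\CM_p^d$ are finitely generated modules over the Noetherian ring $\Z_p$, and hence so is $K_{W,X,x}$. I pick generators $f_1, \ldots, f_r$ and form the combined map $\phi = (f_1, \ldots, f_r): W^r \to X$; then a cofiber $g: X \to Y$ of $\phi$ satisfies $g \circ f_i = 0$ for every $i$, and exactness of $H$ on $W^r \to X \to Y$ produces $y \in H(Y)$ with $g^*(y) = x$. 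Because $H$ takes values in $\Z_p$-modules (via the $\Z_p$-enrichment of $\CM_p^d$) and $f \mapsto f^*(x)$ is $\Z_p$-linear, every $f \in K_{W,X,x}$ is a $\Z_p$-combination of the $f_i$, and $\Z_p$-bilinearity of composition then forces $g \circ f = 0$. So the single morphism $g: (X, x) \to (Y, y)$ simultaneously handles every $f \in K_{W,X,x}$.

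Each enlargement step thus runs over countably many obligations: pairs of objects in $\CX_n$ for (b), pairs of parallel morphisms for (c), and pairs $((X, x), W) \in \CX_n \times \CS$ for (d). Each obligation contributes at most one new object and finitely many new morphisms, and closing under composition with the already-present morphisms adds only countably many more. Hence each $\CX_n$ is countable, $\CX = \bigcup_n \CX_n$ is countable, and every obligation arising from data in $\CX$ is resolved at some finite stage, so $\CX$ is well-filtered.
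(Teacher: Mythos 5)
Your proof is correct and follows essentially the same route as the paper's own argument: an inductive closure under the three operations, with the crucial step for property~(d) being the observation that $\{f\in\CS(W,X)\st f^*(x)=0\}$ is a finitely generated $\Z_p$-module (by Corollary~\ref{cor-F-prime} and the Noetherianity of $\Z_p$), so that a single cofibre of the combined map $W^r\to X$ kills all of them at once and keeps each stage countable. The only cosmetic difference is that you make the $\Z_p$-linearity of $f\mapsto f^*(x)$ and the bilinearity of composition explicit, which the paper leaves implicit.
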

\begin{proof}
 Let $\CX_0$ be a countable subcategory.  We define an increasing
 sequence of countable subcategories $\CX_n$ as follows.  Given
 $\CX_n$, we define $\CX_{n+1}$ by starting with $\CX_n$ and adding
 extra objects and morphisms:
 \begin{itemize}
  \item[(a)] For each pair of objects $(X_0,x_0)$ and $(X_1,x_1)$ in
   $\CX_n$, we choose a coproduct diagram
   $X_0\xra{i_0}X_2\xla{i_1}X_1$ in $\CS$.  As $H$ is
   cohomological, there is a unique element $x_2\in H(X_2)$ with
   $i_0^*(x_2)=x_0$ and $i_1^*(x_2)=x_1$.  We add the object
   $(X_2,x_2)$ and the morphisms
   $(X_0,x_0)\xra{i_0}(X_2,x_2)\xla{i_1}(X_1,x_1)$ to $\CX_{n+1}$.
  \item[(b)] For each pair of objects 
   parallel morphisms $f,g\:(X_0,x_0)\to(X_1,x_1)$ in $\CX_n$, we
   choose a cofibre $h\:X_1\to X_2$ for $g-f$ in $\CS$.  As $H$
   is cohomological, we can choose $x_2\in H(X_2)$ with
   $h^*(x_2)=x_1$.  We add the object $(X_2,x_2)$ and the morphism
   $h\:(X_1,x_1)\to(X_2,x_2)$ to $\CX_{n+1}$.
  \item[(c)] For each $W\in\CS$ and $(X,x)\in\CX_n$, we note
   that $[W,X]$ is a finitely generated $\Z_p$-module, so the same is
   true for the submodule $\{m\in[W,X]\st m^*(x)=0\}$.  We choose
   generators $m_0,\dotsc,m_r$ for this submodule, and combine them to
   give a morphism $m\:\bigoplus_{i<r}W\to X$ with $m^*(x)=0$.  We
   then choose a cofibre $f\:X\to Y$ in $\CS$, and an element
   $y\in H(Y)$ with $f^*(y)=x$.  We add the object $(Y,y)$ and the
   morphism $f\:(X,x)\to (Y,y)$ to $\CX_{n+1}$.
  \item[(d)] Finally, we add in all composites of composable chains of
   morphisms that have already been included.
 \end{itemize}
 Standard arguments show that $\CX_{n+1}$ is still countable, as is
 the union $\CX_\infty=\bigcup_n\CX_n$, and $\CX_\infty$ is also
 well-filtered.  
\end{proof}

\begin{corollary}\label{cor-countable}
 If $H\in\CH$ and $F\leq H$ is countably generated, then there exists
 $G$ with $F\leq G\leq H$ and a short exact sequence 
 $0\to P\xra{d}P\xra{f}G\to 0$ with $P\in\CP_\om$.  In particular,
 $G$ lies in $\CH$ and is representable.
\end{corollary}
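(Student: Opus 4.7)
The plan is to translate the countable generation of $F$ into a countable well-filtered subcategory of $\el(H)$ via Proposition~\ref{prop-well-filtered}, and then read off $G$ together with the desired resolution directly from its objects and morphisms.

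First, I would use the countable generation of $F$ to choose a countable family $(W_i,w_i)_{i\in I}$ with $W_i\in\CS$ and $w_i\in F(W_i)\sse H(W_i)$ whose Yoneda-induced maps $H^{W_i}\to F$ jointly surject onto $F$. These pairs give a countable subcategory $\CX_0\sse\el(H)$, and Proposition~\ref{prop-well-filtered} provides a countable well-filtered enlargement $\CX\supseteq\CX_0$.

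Next, set $P_0=\bigoplus_{(X,x)\in\CX}H^X$, which lies in $\CP_\om$ since $\CX$ is countable, with structure map $f\:P_0\to H$ which on the $(X,x)$-summand is the Yoneda image of $x$ (so $g\:T\to X$ is sent to $g^*(x)\in H(T)$). Define $G=\img(f)\leq H$; then $F\leq G\leq H$ because the generators of $F$ sit as summands of $P_0$. Form a second object $P_1=\bigoplus_{m}H^{\operatorname{dom}(m)}\in\CP_\om$, indexed over the countably many morphisms $m\:(X,x)\to(Y,y)$ of $\CX$, and define a ``differential'' $d\:P_1\to P_0$ on the $m$-summand by $i_{(X,x)}-i_{(Y,y)}\circ m_*$. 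A direct calculation gives $f\circ d=0$.

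The heart of the argument is to verify $\ker(f)=\img(d)$. An element of $\ker(f)(T)$ is a finite tuple $(g_\al\:T\to X_\al)_\al$ with $\sum_\al g_\al^*(x_\al)=0$ in $H(T)$. Filteredness property~(b) of Definition~\ref{defn-well-filtered} produces $(X,x)\in\CX$ together with morphisms $j_\al\:(X_\al,x_\al)\to(X,x)$ in $\CX$; working modulo $\img(d)$, each summand $(g_\al,(X_\al,x_\al))$ is equivalent to $(j_\al g_\al,(X,x))$, and summing yields a single element $(g,(X,x))$ with $g=\sum_\al j_\al g_\al$ and $g^*(x)=\sum_\al g_\al^*(x_\al)=0$. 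Well-filteredness property~(d) then furnishes $h\:(X,x)\to(Y,y)$ in $\CX$ with $hg=0$, and modulo $\img(d)$ the element becomes $(hg,(Y,y))=(0,(Y,y))=0$.

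This yields an exact sequence $P_1\xra{d}P_0\xra{f}G\to 0$ with $P_0,P_1\in\CP_\om$. The main technical obstacle will be promoting this to a genuine short exact sequence $0\to P_1\to P_0\to G\to 0$ with both ends still in $\CP_\om$; I expect this will require absorbing $\ker(d)$ into a larger free cover via a further application of Proposition~\ref{prop-well-filtered}, which is routine since $\ker(d)$ is again countably generated. Granted such a SES, Lemma~\ref{lem-two-of-three} shows $G\in\CH$ (free functors lying in $\CH$ because direct sums of representables are trivially exact), and Lemma~\ref{lem-cok-rep} then makes $G$ representable.
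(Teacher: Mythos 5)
Your overall frame is right --- pass from the countable generating family of $F$ to a countable well-filtered subcategory $\CX\sse\el(H)$ via Proposition~\ref{prop-well-filtered}, and present $G$ as the image of a map from a colimit of representables --- but the decisive step of the paper is missing, and the gap you flag at the end is not one that your proposed patch can close.

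The paper does \emph{not} use the ``all objects, all morphisms'' bar-type presentation $\bigoplus_m H^{\operatorname{dom}(m)}\xra{d}\bigoplus_{(X,x)}H^X$. It uses the fact that $\CX$ is countable \emph{and filtered} to extract a cofinal \emph{sequence} $(X_0,x_0)\to(X_1,x_1)\to\dotsb$, sets $P=\bigoplus_n H^{X_n}$, and takes $d=1-\text{shift}\:P\to P$. This $d$ is automatically a monomorphism (look at the lowest nonzero component of an element killed by $1-\text{shift}$), with $\cok(d)=\colim_n H^{X_n}$, so one is handed a genuine monic into $P$ for free, and the remaining check is only that $\cok(d)\to H$ is injective --- for which well-filteredness does exactly what you noticed. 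This also explains why the statement can promise the \emph{same} $P$ on both ends.

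By contrast, your map $d\:P_1\to P_0$ is a ``one object per object, one object per morphism'' coequaliser presentation, and there is no reason for it to be injective; as you observe, it usually is not. The real problem, though, is not $\ker(d)$ but $\ker(f)=\img(d)$: to get a short exact sequence $0\to Q\to P_0\to G\to 0$ with $Q\in\CP_\om$ you would need $\ker(f)$ itself to be free, and $\ker(f)$ is a quotient of the free $P_1$, not a subobject. Taking a free cover $P_2\era\ker(d)$ (your suggested fix) just lengthens the resolution to $P_2\to P_1\to P_0\to G\to 0$; it does nothing towards making $\ker(f)$ free, and it certainly does not arrange the two copies of $P$ to coincide. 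The functor category $\CA$ is not hereditary, so there is no general principle that would rescue this; the cofinal-sequence/telescope trick is precisely the device that sidesteps the issue, and it is the heart of the proof. Your $F\leq G$ verification and your use of properties (b) and (d) of Definition~\ref{defn-well-filtered} to show exactness at $P_0$ are both correct and essentially parallel to the corresponding parts of the paper's argument.
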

\begin{proof}
 Choose a countable family $\CX$ of objects $(X,x)\in\el(F)\sse\el(H)$
 such that the resulting maps $H^X\to F$ are jointly surjective.
 Consider this as a category with only identity morphisms, and enlarge
 it to a countable well-filtered subcategory $\CY\sse\el(H)$.  This is
 in particular filtered, so we can choose a cofinal sequence
 $(X_0,x_0)\to (X_1,x_1)\to (X_2,x_2)\to\dotsb$.  Put
 $P=\bigoplus_nH^{X_n}$ and let $f\:P\to H$ be the map given by $x_n$
 on the $n$th summand.  Let $d\:P\to P$ be the identity minus the
 shift, so $fd=0$, so $f$ factors through a map $f\:\cok(d)\to H$
 (with $\cok(d)=\colim_nH^{X_n}$).  Let $G$ be the image of $f$.  As
 our sequence is cofinal and the category $\CY$ contains $\CX$, we see
 that $F\leq G$.  All that is left is to show that the map
 $\cok(d)\to H$ is injective.  Suppose we have $W\in\CS$ and
 $m\in\cok(d)(W)$ mapping to zero in $H(W)$.  The element $m$ can be
 represented by a map $m\:W\to X_n$ for some $n$, and the image of
 this in $H(W)$ is $m^*(x_n)$, so $m^*(x_n)=0$.  As $\CY$ is
 well-filtered, there is a morphism $p\:(X_n,x_n)\to (Y,y)$ in $\CY$
 with $pm=0$.  As our sequence is cofinal, there is a morphism from
 $(Y,y)$ to $(X_k,x_k)$ for some $k$ such that the composite
 $(X_n,x_n)\to(Y,y)\to(X_k,x_k)$ is just the given map in the
 sequence.  This shows that $m$ maps to zero in the group
 $\cok(d)(W)$, as required.
\end{proof}

\begin{proposition}\label{prop-SES}
 For any $H\in\CH$ there is a short exact sequence
 $0\to Q\xra{d}P\xra{f}H\to 0$ with $P,Q\in\CP$.
\end{proposition}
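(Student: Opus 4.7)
The plan is to reach the desired short exact sequence by transfinite extension, using Corollary~\ref{cor-countable} as the single-step engine.  Consider the poset $\mathcal{Z}$ of quadruples $(G,P,Q,\sigma)$ with $G\le H$ a subfunctor lying in $\CH$, $P,Q\in\CP$, and $\sigma=(0\to Q\xra{d}P\xra{f}G\to 0)$ a short exact sequence in $\CA$; order by declaring $(G,P,Q,\sigma)\le(G',P',Q',\sigma')$ iff $G\le G'$, the inclusions $P\hookrightarrow P'$ and $Q\hookrightarrow Q'$ are direct summand inclusions, and the evident compatibilities with $d,d',f,f'$ hold.  Every chain in $\mathcal{Z}$ has an upper bound via the directed colimit: filtered colimits in $\CA$ preserve short exact sequences, and the colimit of a directed system of direct summand inclusions between coproducts of compacts is again such a coproduct (concretely, the coproduct of the ``new'' summands added along a cofinal well-ordered subchain), hence remains in $\CP$.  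Zorn therefore applies.

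The heart of the argument is the extension step.  Given $(G,P,Q,\sigma)\in\mathcal{Z}$ and a countably generated subfunctor $F\le H$, I claim one can construct an extension $(G',P',Q',\sigma')\in\mathcal{Z}$ with $G+F\le G'$.  By Lemma~\ref{lem-two-of-three} the quotient $H/G$ lies in $\CH$, and $(F+G)/G\le H/G$ is countably generated as a quotient of $F$.  Apply Corollary~\ref{cor-countable} to produce $G^*\le H/G$ in $\CH$ containing $(F+G)/G$, together with a short exact sequence $\sigma^*: 0\to P^*\xra{d^*}P^*\xra{f^*}G^*\to 0$ where $P^*\in\CP_\om$.  Let $G'$ be the preimage of $G^*$ in $H$, so that $G+F\le G'$, $G'/G=G^*$, and $G'\in\CH$ by Lemma~\ref{lem-two-of-three} again.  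For any $U\in\CM_p^d$ the Yoneda identification $\CA(H^U,-)=\;$evaluation at $U$ shows that $H^U$ is projective in $\CA$, and hence so is any coproduct like $P^*$.  The Horseshoe Lemma applied to the extension $0\to G\to G'\to G^*\to 0$ with the resolutions $\sigma$ and $\sigma^*$ then yields a short exact sequence $0\to Q\oplus P^*\xra{d'}P\oplus P^*\xra{f'}G'\to 0$: explicitly, $f'(p,q)=f(p)+\tilde{f}(q)$ for a projective lift $\tilde{f}:P^*\to G'$ of $f^*$ along $G'\era G^*$, and $d'(r,q)=(d(r)-s(q),d^*(q))$ for a projective lift $s:P^*\to P$ of $\tilde{f}\circ d^*:P^*\to G$ along $f:P\era G$.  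The obvious direct summand inclusions $P\hookrightarrow P\oplus P^*$ and $Q\hookrightarrow Q\oplus P^*$ make this an extension of $(G,P,Q,\sigma)$ in $\mathcal{Z}$.

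With the extension step available, Zorn produces a maximal element $(G_{\max},\dotsc)\in\mathcal{Z}$.  If $G_{\max}\ne H$ then there is some $X\in\CS$ with $G_{\max}(X)\subsetneq H(X)$; choosing $x\in H(X)\setminus G_{\max}(X)$ and applying the extension step with $F$ the image of the natural transformation $H^X\to H$ corresponding to $x$ produces a strict extension in $\mathcal{Z}$, contradicting maximality.  Hence $G_{\max}=H$, yielding the required short exact sequence.  The main obstacle is the extension step, specifically the Horseshoe construction: it requires using the projectivity of $P^*$ in $\CA$ twice to produce the lifts $\tilde{f}$ and $s$, followed by a standard diagram chase to verify exactness of the combined sequence.
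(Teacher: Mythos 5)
Your argument is essentially the paper's argument (apply Corollary~\ref{cor-countable} to the cokernel of what has been built so far, then glue via the Horseshoe lemma, and iterate transfinitely) reorganised as a Zorn's--lemma argument rather than an explicit transfinite recursion.  The extension step and the maximality contradiction are fine and match the paper's successor step almost exactly.  However, the Zorn reorganisation introduces a real gap in the upper-bound-for-chains step.

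You assert that the colimit of a chain in $\mathcal{Z}$ is again in $\CP$ because it is ``the coproduct of the new summands added along a cofinal well-ordered subchain.''  But a direct summand inclusion $P\hookrightarrow P'$ between free objects (coproducts of representables) does \emph{not} automatically have a free complement: the complement is only a retract of the free object $P'$, hence projective in $\CA$, and projectives in $\CA$ need not be free.  So the colimit of a chain is a coproduct of projectives --- which is projective --- but you have not shown it lies in $\CP$.  Since Zorn requires \emph{every} chain in the poset to have an upper bound, and your poset $\mathcal{Z}$ as defined contains quadruples whose summand inclusions need not have free complements, the argument as written does not close.  The standard fix is exactly what the paper does: rather than recording only the ambient $P$ and $Q$, carry along the ordinal-indexed family of $\CP_\omega$-summands $(P_\alpha)$, $(Q_\alpha)$, so that $P_{<\alpha}=\bigoplus_{\beta<\alpha}P_\beta$ and at limit ordinals the object is manifestly free.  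Equivalently, you could build the requirement ``the complement is a specified object of $\CP_\omega$'' into your order relation on $\mathcal{Z}$; your extension step already produces such complements, so nothing else changes.  Either way the result is essentially the paper's explicit transfinite recursion.
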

\begin{proof}
 We can choose an ordinal $\kp$ and a family of objects
 $U_\al\in\CM_p^d$ and elements $u_\al\in H(U_\al)$ for $\al<\kp$ such
 that the resulting map $\bigoplus_{\al<\kp}H^{U_\al}\to H$ is an
 epimorphism.  We aim to construct the following:
 \begin{itemize}
  \item For all $\al<\kp$, objects $Q_\al,P_\al\in\CP_{\om}$, 
   which in turn give objects $Q_{<\al}=\bigoplus_{\bt<\al}Q_\bt$
   and $P_{<\al}=\bigoplus_{\bt<\al}P_\bt$.
  \item Monomorphisms $d_\al\:Q_{<\al}\to P_{<\al}$ for all $\al<\kp$,
   with $d_\al|_{Q_{<\bt}}=d_\bt$ for all $\bt\leq\al$.
  \item Maps $f_\al\:P_{<\al}\to H$ for all $\al<\kp$, such that
   $\ker(f_\al)=\img(d_\al)$, and $\img(f_\al)$ contains
   $\img(u_\bt\:H^{U_\bt}\to H)$ for all $\bt<\al$.
 \end{itemize}
 Suppose we have constructed all this for all $\al<\lm$.  If $\lm$ is
 a limit ordinal, then $P_{<\lm}$ is the colimit of the objects
 $P_{<\al}$ for $\al<\lm$, and similarly for $Q_{<\lm}$, so there is a
 unique possible way to define $d_\lm$ and $f_\lm$.  Suppose instead
 that $\lm=\mu^+$.  As $d_\mu$ is injective, we deduce that the
 functor $\img(f_\mu)\simeq\cok(d_\mu)$ is exact.  As $\img(f_\mu)$
 and $H$ are exact, it follows that the quotient
 $\cok(f_\mu)=H/\img(f_\mu)$ is also exact.  Using
 Corollary~\ref{cor-countable} we can choose a left exact sequence
 $Q_\mu\xra{e}P_\mu\xra{g}\cok(f_\mu)$ with
 $Q_\mu,P_\mu\in\CP_{\om}$, such that $g(P_\mu)$ 
 contains the image of $H^{U_\mu}\to F\to H \to\cok(f_\mu)$.  As
 $H\to\cok(f_\mu)$ is an epimorphism and $P_\mu$ is projective in
 $\CA$ we can choose $g'\:P_\mu\to H$ lifting $g$.  As $ge=0$ we see
 that $\img(g'e)\leq\img(f_\mu)$ and $Q_\mu$ is also projective so we
 can choose $t\:Q_\mu\to P_{<\mu}$ with $g'e=f_\mu t$.  Now note that
 $Q_{<\lm}=Q_{<\mu}\oplus Q_\mu$ and $P_{<\lm}=P_{<\mu}\oplus P_\mu$
 so we can define maps $Q_{<\lm}\xra{d_\lm}P_{<\lm}\xra{f_\lm}H$ by 
 $d_\lm(x,y)=(d_\mu(x)-t(y),e_\mu(y))$ and
 $f_\lm(x,y)=f_\mu(x)+g'(y)$.  By chasing the diagram 
 \begin{center}
  \begin{tikzcd}[ampersand replacement=\&]
   Q_{<\mu}
    \arrow[d,rightarrowtail,"d_\mu"'] 
    \arrow[r,rightarrowtail,"{\bsm 1\\ 0\esm}"] \&
   Q_{<\mu}\oplus Q_\mu
    \arrow[d,"{\bsm d_\mu & -t\\ 0&e_\mu\esm}"]
    \arrow[r,twoheadrightarrow,"{\bsm 0&1\esm}"] \&
   Q_\mu 
    \arrow[d,rightarrowtail,"e"] \\
   P_{<\mu}
    \arrow[d,twoheadrightarrow,"f_\mu"'] 
    \arrow[r,rightarrowtail,"{\bsm 1\\ 0\esm}"] \&
   P_{<\mu}\oplus P_\mu
    \arrow[d,"{\bsm f_\mu & g'\esm}"]
    \arrow[r,twoheadrightarrow,"{\bsm 0&1\esm}"] \&
   P_\mu 
    \arrow[d,"g"] \\
   \img(f_\mu)
    \arrow[r,rightarrowtail] \&
   H 
    \arrow[r,twoheadrightarrow] \&
   \cok(f_\mu)
  \end{tikzcd}
 \end{center}
 we see that the requirements for stage $\lm$ are now satisfied.  By
 transfinite recursion, we can now define everything up to stage
 $\kp$.  We then have a short exact sequence
 $Q_{<\kp}\to P_{<\kp}\to H$ of the required type.
\end{proof}

\begin{proof}[Proof of Theorem~\ref{thm-brown}]
 Combine Proposition~\ref{prop-SES} and Lemma~\ref{lem-cok-rep}.
\end{proof}

\bibliographystyle{alpha}

\end{document}